\newtheorem{theorem}{Theorem}[section]
\newtheorem{claim}[theorem]{Claim}
\newtheorem{lemma}[theorem]{Lemma}
\newtheorem{proposition}[theorem]{Proposition}
\newtheorem{corollary}[theorem]{Corollary}\newtheorem{conjecture}[theorem]{Conjecture}
\theoremstyle{definition}
\newtheorem{definition}[theorem]{Definition}
\newtheorem{fact}[theorem]{Fact}
\newtheorem{discussion}[theorem]{Discussion}
\theoremstyle{remark}
\newtheorem{remark}[theorem]{Remark}
\newtheorem{question}[theorem]{Question}
\newtheorem{notation}[theorem]{Notation}
\newcommand{\Ker}{{\rm Ker}}\newcommand{\cok}{{\rm coker}}
\newcommand{\rest}{{\restriction}}
\newcommand{\dom}{{\rm dom}}
\newcommand{\Rang}{{\rm Rang}}
\newcommand{\Ext}{{\rm Ext}}
\newcommand{\Hom}{{\rm Hom}}
\newcommand{\Max}{{\rm Max}}
\newcommand{\supp}{{\rm Supp}}
\newcommand{\suc}{{\rm suc}}
\newcommand{\lub}{{\rm lub}}
\newcommand{\otp}{{\rm otp}}
\newcommand{\id}{{\rm id}}
\newcommand{\wilog}{{\rm without loss of generality}}
\newcommand{\Wilog}{{\rm Without loss of generality}}
\newcommand{\mn}{{\medskip\noindent}}
\newcommand{\sn}{{\smallskip\noindent}}
\newcommand{\cD}{{\mathscr D}}
\newcommand{\cH}{{\mathscr H}}
\newcommand{\cG}{{\mathscr G}}
\newcommand{\bbP}{{\mathbb P}}
\newcommand{\cP}{{\mathscr P}}
\newcommand{\bbQ}{{\mathbb Q}}
\newcommand{\bbZ}{{\mathbb Z}}
\newcommand{\cU}{{\mathscr U}}
\newcommand{\cf}{{\rm cf}}
\def\mathunderaccent#1#2 {\let\theaccent#1\skewfactor#2
\mathpalette\putaccentunder}
\def\putaccentunder#1#2{\oalign{$#1#2$\crcr\hidewidth
\vbox to.2ex{\hbox{$#1\skew\skewfactor\theaccent{}$}\vss}\hidewidth}}
\def\name{\mathunderaccent\tilde-3 }
\begin{document}

\title {Graphs represented by Ext}

\author[M. Asgharzadeh]{Mohsen Asgharzadeh}

\address{Mohsen Asgharzadeh, Hakimiyeh, Tehran, Iran.}

\email{mohsenasgharzadeh@gmail.com}

\author[M.  Golshani]{Mohammad Golshani}

\address{Mohammad Golshani, School of Mathematics, Institute for Research in Fundamental Sciences (IPM), P.O.\ Box:
	19395--5746, Tehran, Iran.}

\email{golshani.m@gmail.com}
\urladdr{http://math.ipm.ac.ir/~golshani/}

\author[S. Shelah] {Saharon Shelah}
\address{Einstein Institute of Mathematics\\
Edmond J. Safra Campus, Givat Ram\\
The Hebrew University of Jerusalem\\
Jerusalem, 91904, Israel\\
 and \\
 Department of Mathematics\\
 Hill Center - Busch Campus \\
 Rutgers, The State University of New Jersey \\
 110 Frelinghuysen Road \\
 Piscataway, NJ 08854-8019 USA}
\email{shelah@math.huji.ac.il}
\urladdr{http://shelah.logic.at}
\thanks{The second author's research has been supported by a grant from IPM (No. 1400030417) and Iran National Science Foundation (INSF)  (No. 98008254). The
	third author's research  partially supported by NSF grant no: DMS 1833363. This is publication 1217 of third author.}

\subjclass[2020]{Primary: 03C60;  20A15 Secondary:13L05. }

\keywords {Abelian groups; almost-free modules; Ext-groups; forcing; graph theory; set theoretic methods in  algebra; vanishing of Ext.}

\begin{abstract}
	This paper  
opens and discusses the question originally due to
 Daniel Herden, who asked for which graph $(\mu,R)$
  we can find a family  $\{\mathbb G_\alpha: \alpha < \mu\}$
   of  abelian groups such
that for each $\alpha,\beta\in\mu$
\begin{center}
 $ \Ext(\mathbb G_\alpha, \mathbb G_\beta) = 0$ iff $ (\alpha,\beta) \in R$.
 \end{center}
In this regard, we present  four results. First, we give a connection to
Quillen's small object argument which helps $\Ext$ vanishes and uses to  present a useful criteria to the question. Suppose $\lambda = \lambda^{\aleph_0}$ and $\mu = 2^\lambda$.
We apply Jensen's diamond principle along with the criteria to present $\lambda$-free    abelian groups   representing 
 bipartite graphs.
 Third, we use a version of  black box  to construct  in ZFC, a family of $\aleph_1$-free abelian groups representing bipartite graphs.
 Finally,  applying forcing techniques, we present  a consistent positive answer for general graphs.
\end{abstract}

\maketitle
\numberwithin{equation}{section}\tableofcontents
\section{Introduction}

The vanishing and non-vanishing properties of $\Ext(-,\sim)$ are  useful tools,  see for instance the book \cite{w}. %These are important not only in the finitely generated case but also in general.
Here, we assume the objects $-,\sim$  are not necessarily  noetherian, so the corresponding Ext-family becomes more mysterious.
Despite
its ubiquity, there is very little known about the correspondence between graph theory and the Ext-family.  Our aim in this paper is to present a sample of
 such connection by coding graphs using the vanishing property of Ext of a family of  almost free  abelian groups.

  Recall the following  achievements from literature.
In his seminal paper \cite{Sh:44}, Shelah proved that freeness of   Whitehead groups, that  is an abelian  $\mathbb G$ satisfying the vanishing property $\Ext(\mathbb G,\mathbb{Z})=0$,
is undecidable in ZFC. After this,
there has been a considerable amount of work to understand set theoretical methods in algebra.
G\"obel and
Shelah \cite{gsh} introduced   a  method  to  construct  splitters,  that  is groups $\mathbb G$ satisfying   $\Ext(\mathbb G,\mathbb G)=0$. They applied
their method  to  prove  the existence of enough projective   and injective  objects in the rational cotorsion pairs.
Cotorsion pairs were introduced by Salce \cite{sal} in 1979. Combining with the splitters, this
theory has a lot of applications, not only in group theory but also in the theory of rings and modules.  For instance, see the book \cite{GT}.
G\"obel,
Shelah and Wallutis, proved in \cite{GSW} that any poset embeds into   the lattice of cotorsion pairs of abelian
groups. To be more explicit, let $I$ be any set, and look at the power set $\mathcal{P}(I)$ of $I$. For any
$ X\in \mathcal{P}(I)$ they construct $\aleph_1$-free abelian groups
$\mathbb G_X ,\mathbb H^X$ such that
  for all $X, Y \subseteq I,$
 $$\Ext(\mathbb G_Y, \mathbb H^X) = 0 \Longleftrightarrow Y \subseteq X\quad(\ast).$$
Also, there are some restrictions on the size of 	Ext-groups. As a sample, suppose  $ \mathbb G$ is countable  torsion-free, then $\Ext(\mathbb G,\mathbb{Z})$ is divisible
and hence determined up to isomorphism by its torsion-free rank and its $p$-rank. Shelah   and Str\"ungmann proved that the $p$-rank of $\Ext(\mathbb G,\mathbb{Z})$ is either  countable or  $2^{\aleph_0}$. For this and its generalization, see \cite{ss}.

%One can consider the partially ordered set $(\mathcal{P}(I), \subseteq)$ as a special case of graphs.
\begin{definition} Given a directed graph $G=(\mu, R)$, we say that  $G$ is realized as  an Ext-graph of
groups, provided that there exists a family of groups  $\{\mathbb G_\alpha: \alpha < \mu\}$  such
that for each $\alpha,\beta\in\mu$
\begin{center}
 $ \Ext(\mathbb G_\alpha,\mathbb G_\beta) = 0$ iff $ (\alpha,\beta) \in R$.
 \end{center}
\end{definition}
Given the success of representing a wide range of rings as
endomorphism rings of abelian groups (see \cite{EM02} and \cite{GT}),   and useful constructions of  a rigid system consisting of $2^\kappa$
abelian groups (see \cite{fuchs}),
          it is a natural and interesting
question of what can be represented as extension groups   of abelian groups.
In fact,
Daniel Herden asked the following question:

 \begin{question}\label{1.1} Which graphs
$(\mu,R)$ can be realized as an Ext-graph of abelian groups.
\end{question}
Our aim in this paper is to partially answer  Question \ref{1.1}.

The organization of the paper is as follows.
 Section 2 contains the preliminaries and basic notations that we need.
In Section
3 we  apply some ideas similar to
Quillen's small object argument  from model category (see \cite[Theorem 2.1.14]{mark})
 to present a  general criteria for representing bipartite graphs as an Ext-graph of
   $\lambda$-free abelian groups:\mn

\textbf{Theorem (A).}
Let $\mu_1, \mu_2 \leq 2^\lambda$  be such that   $\cf(\mu_2)> \lambda$  and let $R \subseteq \mu_1 \times \mu_2$. Suppose the following assumptions are satisfied:

	\begin{enumerate} \item[(a)]$ \bar{\mathbb{G}} = \langle
		\mathbb{G}_\alpha:\alpha < \mu_1\rangle$ and $\bar{\mathbb{G}}^\iota =
		\langle \mathbb{G}^\iota_\alpha:\alpha < \mu_1\rangle$ for $\iota =1, 2$,
		are sequences of abelian groups,
		%\sn
		\item[(b)] for each $\alpha$,  $\mathbb{G}_\alpha$ is an $\aleph_1$-free abelian
		group of cardinality $\lambda$ and $\mathbb{G}_\alpha =
		\mathbb{G}^2_\alpha/\mathbb{G}^1_\alpha,$ where $\mathbb{G}^1_\alpha \subseteq \mathbb{G}^2_\alpha$ are free abelian groups of
		cardinality $\lambda$,
		%\sn
		\item[(c)]  if $\alpha < \mu_1$ and $\mathbb{L}$ is
		constructible by $\{\mathbb{G}_\gamma:\gamma \in \mu_1 \setminus \alpha\}$
		over $\mathbb{G}^1_\alpha$ then, there is no homomorphism ${\bf g}$ from
		$\mathbb{G}^2_\alpha$ into $\mathbb{L}$ extending $\id_{\mathbb{G}^1_\alpha}$.
	\end{enumerate}
	%\sn
	%\item[$\boxtimes_2$]   $(d) \quad R \subseteq \mu_1 \times \mu_2$ and
	%$\mu_2 = 2^\lambda$ (or $\le \mu_2$).

Then there exists a sequence $\bar{\mathbb{K}} = \langle \mathbb{K}_\beta:\beta
< \mu_2\rangle$ equipped with the following three properties:
%\mn

	\begin{enumerate}
		\item[$(\alpha)$]  $ \mathbb{K}_\beta$ is an $\aleph_1$-free abelian group  of
		cardinality $2^\lambda$ for each $\beta
		< \mu_2$,
		%\sn
		\item[$(\beta)$]   $ \Ext(\mathbb{G}_\alpha, \mathbb{K}_\beta) = 0$ iff
		$\alpha R \beta$,
		\item[$(\gamma)$]   if every $\mathbb{G}_\alpha$ is $\lambda$-free
		then every $\mathbb{K}_\alpha$ is $\lambda$-free as well.
\end{enumerate}

Theorem (A) is one of the main  technical results of the paper and plays an essential role in the sequel. Given  cardinals $\mu < \lambda$ with $\mu$ regular we set
 $$S^\lambda_\mu=\{\alpha < \lambda: \cf(\alpha)=\mu  \}.$$
In Section 4 we apply Jensen's diamond principle $\diamondsuit_S$ along with Theorem (A), and  show
the following:

\textbf{Theorem (B).} Let $S \subseteq
S^\lambda_{\aleph_0}$ be a non-reflecting stationary subset of $\lambda$  and suppose
$\diamondsuit_S$ holds.
Suppose $\lambda = \lambda^{\aleph_0},\mu = 2^\lambda$ and let $R \subseteq \mu
\times \mu$  be a relation. Then
there are sequences $\langle \mathbb G_\alpha: \alpha < \mu \rangle$ and  $\langle \mathbb K_\alpha: \alpha < \mu \rangle$ of $\lambda$-free abelian groups such that for all $\alpha < \mu, |\mathbb G_\alpha|=\lambda,$ $|\mathbb K_\alpha|=2^\lambda$ and for all $\alpha, \beta < \mu,$
$$\Ext(\mathbb G_\alpha, \mathbb K_\beta) = 0 \iff \alpha R \beta.$$

 The
new advantage we have is that we work with $\lambda$-free abelian groups with a control on their size.
 Recall that Jensen's diamond principle is a kind of prediction principle whose truth is independent of ZFC. 
 
In Section 5, we descend from Section 4 to  the ordinary ZFC
set theory  and as another application of  Theorem  (A), we prove the following theorem, where instead of using the diamond principle we use some variant of ``\textit{Shelah's black
	box}'':

\textbf{Theorem (C).}
Let $\lambda = \lambda^{\aleph_0},\mu = 2^\lambda$ and let $R \subseteq \mu
\times \mu$  be a relation. Then
there are families
$\langle  \mathbb G_\alpha: \alpha < \mu \rangle,$ and  $\langle \mathbb K_\alpha: \alpha < \mu \rangle$ of $\aleph_1$-free abelian groups equipped with the following properties:
\begin{enumerate}
	\item  for all $\alpha < \mu, \mathbb G_\alpha$ has size $\lambda$
	and $\mathbb K_\alpha$ has size $2^\lambda,$
	\item for all $\alpha, \beta < \mu,$
	$$\Ext(\mathbb G_\alpha, \mathbb K_\beta) = 0 \iff \alpha R \beta.$$
\end{enumerate}

Here, we lose the $\lambda$-freeness from Theorem (B), the groups are just $\aleph_1$-free,  and this is the price  that Theorem (B) should pay to be in ZFC.
The  black boxes were introduced  by Shelah in \cite{Sh:172},
where he proved that they   can be considered
as  a general method to generate a class of diamond-like principles provable in ZFC.
In particular, Question \ref{1.1} has a positive answer for the case of bipartite graphs,
where: 
\begin{definition}
 A graph $(\mu,R)$ is called bipartite if the vertex
set can be decomposed as $ V_1 \cup V_2 $ such that all edges go between $ V_1$ and
$ V_2$.\end{definition}
 Concerning Theorem (C),   we can realize bipartite  graphs as the Ext-graph of  $\aleph_1$-free abelian groups.
Nevertheless, it is easy to see that bipartite graphs fit in the situation of $(\ast)$.
In particular, we recover the main result of \cite{GSW} by a new argument, see Lemma
\ref{our2}. It may be worth to mention that Theorem (C)  slightly improves \cite{GSW} via  computing the size of objects,  namely  $|\mathbb G_\alpha|=\lambda $ and $|\mathbb K_\alpha|=2^\lambda$ for all $\alpha < \mu$.

In the final section we  prove the following theorem:

\textbf{Theorem (D).}
Suppose GCH holds and the pair $(S, R)$ is a graph  where $R \subseteq S \times S$, and  let $\lambda>|S|$ be an uncountable regular cardinal. Then there exists a cardinal preserving generic extension
of the universe, and there is a family $\{\mathbb{G}_s: s \in S\}$ of $\lambda$-free abelian groups  such that
$$
\Ext(\mathbb{G}_s, \mathbb{G}_t)=0 \iff s R t.
$$ 

The strategy of the proof   of
 Theorem (D) is given in Discussion \ref{dis}. However,
 there are some details to be checked, and this is our task in \S6. Here, the
new advantage we have is that we work with a general graph and also we rely on forcing techniques.
This theorem  gives a consistent positive answer to Question \ref{1.1}. Despite this,  
we think Herden's question has a positive answer in ZFC. Namely,
we
present the following conjecture:

\begin{conjecture}
	Any graph can be realized as  an Ext-graph of groups.
	\end{conjecture}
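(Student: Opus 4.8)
The strategy is to manufacture the groups $\mathbb{G}_s$ by forcing, starting from ground-model \emph{skeletons} whose only obstructions to $\Ext$-vanishing are encoded along pairwise disjoint stationary sets, and then adjoining generic solutions that annihilate $\Ext$ exactly along the edges of $R$ while leaving the obstructions along the non-edges untouched.

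First, working in the ground model (where GCH holds), split $S^\lambda_{\aleph_0}$ into pairwise disjoint stationary sets $\langle E_s : s\in S\rangle$; this is possible since $\lambda$ is regular uncountable and $|S|<\lambda$. For each $s$ I build, from a ladder system on $E_s$, a $\lambda$-free abelian group $\mathbb{G}^0_s$ together with a free resolution $0\to A_s\to B_s\to\mathbb{G}^0_s\to 0$, so that $A_s,B_s$ are free of size $\lambda$. Because $B_s$ is projective, the long exact sequence gives $\Ext(\mathbb{G}^0_s,H)=\cok\big(\Hom(B_s,H)\to\Hom(A_s,H)\big)$, so that $\Ext(\mathbb{G}^0_s,H)=0$ is equivalent to the solvability, inside $H$, of the system of ladder equations attached to the points of $E_s$. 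The disjointness of the $E_s$ is what guarantees that the equations belonging to $s$ are indexed by coordinates disjoint from those of any $s'\neq s$.

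Second, I define a forcing $\mathbb{P}$ whose conditions are coherent partial structures of size $<\lambda$: a common height $\delta<\lambda$, an initial segment of each $\mathbb{G}_s$, and, for each edge $sRt$, a partial homomorphism extending the canonical map $A_s\to\mathbb{G}_t$ to part of $B_s$, ordered by end-extension. As $\delta$ ranges over $\lambda$ the generic builds each $\mathbb{G}_s$ (of size $\lambda$) and, for every edge $sRt$, a total lift $B_s\to\mathbb{G}_t$ of the attached map; by the criterion above, together with a density argument solving every relevant ladder equation for $s$ inside $\mathbb{G}_t$, this forces $\Ext(\mathbb{G}_s,\mathbb{G}_t)=0$ whenever $sRt$. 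Since $\lambda$ is regular and the conditions are coherent, increasing unions of length $<\lambda$ are again conditions, so $\mathbb{P}$ is $<\lambda$-closed; a nice-name count under GCH bounds $|\mathbb{P}|\le\lambda^+$ and yields the $\lambda^+$-chain condition. Together these preserve all cardinals and cofinalities, so the extension is cardinal preserving. The same $<\lambda$-closure keeps each $\mathbb{G}_s$ $\lambda$-free, since any subgroup of size $<\lambda$ is captured at a bounded stage, where it sits inside a free initial segment of the ladder construction.

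The main obstacle is the converse direction: for a non-edge $(s,t)\notin R$ one must verify that the forcing does \emph{not} inadvertently make $\Ext(\mathbb{G}_s,\mathbb{G}_t)=0$. The generic elements poured into $\mathbb{G}_t$ arise only from solving the ladder equations of those $s'$ with $s'Rt$, and each such family of solutions is supported on coordinates indexed by $E_{s'}$, which is disjoint from $E_s$. Consequently a fixed obstruction map $A_s\to\mathbb{G}_t$ witnessing $(s,t)\notin R$ cannot be extended, because no combination of the $E_{s'}$-supported generic elements meets the equation demanded at a point of $E_s$. Turning this heuristic into a proof is the crux: one argues by density that for every $\mathbb{P}$-name for a would-be lift there is a condition forcing it to fail at some $\delta\in E_s$, exploiting the independence of the generic solutions over disjoint supports together with the stationarity, and if needed non-reflection, of $E_s$. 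This \emph{orthogonality of supports} is precisely what decouples the extension-adding along edges from the obstruction-preserving along non-edges, and it carries the bulk of the technical work.
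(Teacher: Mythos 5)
Your proposal has two genuine gaps, one of them structural. First, the statement you are trying to prove is a ZFC assertion (``\emph{any} graph can be realized as an Ext-graph of groups''), but your argument is a forcing construction: it produces a generic extension in which the graph is realized, which at best yields a \emph{consistency} result. Since Ext-vanishing for uncountable groups is not absolute (Whitehead's problem already shows this), no transfer back to the ground model is available, and you supply none. This is exactly the distinction the paper itself draws: the conjecture is left open, and what is actually proved (Theorem 6.1) is the consistency statement, via a cardinal-preserving forcing extension.

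Second, even judged as a consistency proof, the edge direction of your construction fails. To get $\Ext(\mathbb{G}_s,\mathbb{G}_t)=0$ from the resolution $0\to A_s\to B_s\to \mathbb{G}_s\to 0$ you must extend \emph{every} homomorphism $A_s\to\mathbb{G}_t$ to $B_s$ (equivalently, split every extension), including homomorphisms that exist only in the generic extension and are not captured by any bounded stage of your generic. Your forcing carries one coordinate per edge, lifting only ``the canonical map $A_s\to\mathbb{G}_t$''; a density argument over a fixed, single-step forcing cannot anticipate names for homomorphisms created by that very forcing. This is precisely why the paper's proof is two-step and iterative: a preliminary forcing $\bbP_*$ adds the groups $\mathbb{G}_s$ together with distinguished non-split sequences $\mathbf{x}_{s,t}$ for the non-edges, and then a $\lambda$-support iteration of length $\lambda^+$ with a book-keeping function $\Phi$ (using GCH and the $\lambda^+$-c.c.\ to guarantee every name of an exact sequence appears at some stage) splits all sequences attached to edges. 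Finally, your non-edge direction rests on the ``orthogonality of supports'' idea, which you yourself flag as a heuristic; disjointness of the stationary sets $E_{s'}$ does not by itself prevent the generic lifts from assembling into a splitting for a non-edge. The paper's actual argument here (Claim 6.8 and Lemmas 6.7, 6.11) is quite different: given any condition deciding a partial splitting $h_*$, one \emph{extends the condition} by adjoining new generators $z_n$ subject to factorial relations $n!z_{n-1}=z_n+x^1_{\gamma_n}+x^2_{\gamma_n}$ to the groups and to the middle term $\mathbb{H}_{s,t}$, so that no splitting extending $h_*$ can exist; the non-reflecting stationary set enters only to locate the reflection point $\alpha_q\in E_q\cap S^\lambda_{\aleph_0}$ at which this surgery is performed, and the same argument must then be repeated inside the two-step iteration to show the non-splitting survives. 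Without an analogue of this killing lemma, your construction does not rule out accidental splittings along non-edges.
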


We hope our results  will shed more light on interplay
between  homological algebra and graph theory.

For all unexplained definitions from algebra
see
the  books by Eklof-Mekler \cite{EM02}  and G\"{o}bel-Trlifaj
\cite{GT}. Also, for  unexplained definitions from the theory of forcing
see the books of Jech \cite{j} and Kunen \cite{k}.

\section{Preliminary notation}
 In this section, we set out our notation and discuss some facts that will be used throughout the paper and refer to the book
 of Eklof and Mekler \cite{EM02} for more information. We restrict our discussion to the category Mod-$\mathbb{Z}$ of  abelian groups, though most of the notions and results can be extended to module categories over more general rings.
For abelian groups $\mathbb{G}$ and $ \mathbb{H}$, we  set $\Ext(\mathbb{G},\mathbb{H}):=\Ext^1_{\mathbb{Z}}(\mathbb{G},\mathbb{H})$ and
similarly, $\Hom(\mathbb{G},\mathbb{H}):=\Hom_{\mathbb{Z}}(\mathbb{G},\mathbb{H})$.
We need the following well-known fact (see e.g. the book \cite[Page 77]{w}):

\begin{fact}\label{yoneda}(Baer and Yoneda)
Let $\zeta_i:=0\longrightarrow \mathbb{B}\stackrel{g_i}\longrightarrow \mathbb{C}_i \stackrel{f_i}\longrightarrow \mathbb{A}\longrightarrow 0$ be two short  exact sequences of abelian groups. We say  $\zeta_1$ is  equivalent to $\zeta_2$ if there is a commutative diagram:
	$$
	\begin{CD}
\zeta_2=	0@>>> \mathbb{B}@>g_2>>\mathbb{C}_2 @>f_2>> \mathbb{A} @>>> 0\\
	@.=@AAA\cong @AAA = @AAA   \\
\zeta_1=	0@>>> \mathbb{B} @>g_1>> \mathbb{C}_1 @>f_1>> \mathbb{A} @>>>0\\
	\end{CD}
	$$
Indeed, this is an equivalent relation, and there is a 1-1 correspondence between the equivalent class
of these short exact sequences and $\Ext(\mathbb{A},\mathbb{B})$. In addition, $[\zeta_1]=0\in \Ext(\mathbb{A},\mathbb{B})$ iff $\zeta_1$
splits.	\end{fact}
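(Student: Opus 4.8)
The plan is to reduce everything to a single free resolution of $\mathbb{A}$ together with two lifting/pushout constructions. Since we are in the category of abelian groups, every subgroup of a free group is free, so I may fix a short free resolution
$$0 \longrightarrow \mathbb{P}_1 \stackrel{i}\longrightarrow \mathbb{P}_0 \stackrel{p}\longrightarrow \mathbb{A} \longrightarrow 0,$$
with $\mathbb{P}_0,\mathbb{P}_1$ free. Applying $\Hom(-,\mathbb{B})$ to this projective resolution, $\Ext(\mathbb{A},\mathbb{B})$ is the cohomology at $\Hom(\mathbb{P}_1,\mathbb{B})$, and since the next term vanishes this is simply
$$\Ext(\mathbb{A},\mathbb{B}) = \cok\big(\Hom(\mathbb{P}_0,\mathbb{B}) \stackrel{i^\ast}\longrightarrow \Hom(\mathbb{P}_1,\mathbb{B})\big).$$
This is the concrete model of $\Ext$ against which the extension classes will be matched.

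Next I would define the assignment $\zeta \mapsto [\zeta]$. Given $\zeta\colon 0 \to \mathbb{B} \stackrel{g}\to \mathbb{C} \stackrel{f}\to \mathbb{A} \to 0$, projectivity of $\mathbb{P}_0$ yields a lift $\theta\colon\mathbb{P}_0 \to \mathbb{C}$ with $f\theta = p$. Since $f\theta i = pi = 0$, the composite $\theta i$ lands in $\Ker f = \rng g$, so there is a unique $\psi\colon \mathbb{P}_1 \to \mathbb{B}$ with $g\psi = \theta i$; I set $[\zeta] := \psi + \rng(i^\ast) \in \Ext(\mathbb{A},\mathbb{B})$. I would then check that $[\zeta]$ does not depend on the choice of $\theta$ (two lifts differ by a map $\mathbb{P}_0 \to \Ker f = \rng g$, which alters $\psi$ only by an element of $\rng(i^\ast)$) and that equivalent extensions yield the same class (an equivalence isomorphism $\mathbb{C}_1 \cong \mathbb{C}_2$ carries one chosen lift to the other).

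For the equivalence-relation claim I would first note that any morphism between the two sequences fixing $\mathbb{A}$ and $\mathbb{B}$ is automatically an isomorphism on the middle term by the five lemma; reflexivity, symmetry (invert the middle isomorphism) and transitivity (compose) are then immediate. For bijectivity I would build the inverse by pushout: given $\psi\colon\mathbb{P}_1\to\mathbb{B}$, form $\mathbb{C}_\psi := (\mathbb{B}\oplus\mathbb{P}_0)/\{(\psi(x),-i(x)):x\in\mathbb{P}_1\}$, which sits in an extension $0\to\mathbb{B}\to\mathbb{C}_\psi\to\mathbb{A}\to 0$ whose class is the original $\psi+\rng(i^\ast)$, giving surjectivity; and two extensions sharing a class are shown equivalent by comparing their lifts, giving injectivity. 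For the final assertion: if $\zeta$ splits via a section $s\colon\mathbb{A}\to\mathbb{C}$, then $\theta := s p$ is a lift with $\theta i = spi = 0$, so $\psi = 0$ and $[\zeta]=0$; conversely if $[\zeta]=0$ then $\psi = \eta i$ for some $\eta\colon\mathbb{P}_0\to\mathbb{B}$, and $\theta - g\eta$ kills $\rng i = \Ker p$, hence factors through $\mathbb{A}$ to produce a section of $f$.

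The conceptually substantive steps are the two lifting arguments supplied by projectivity of $\mathbb{P}_0$; the main obstacle is purely bookkeeping, namely verifying that the extension-to-class map and the pushout-to-extension map are mutually inverse and compatible with the equivalence relation, which is a chain of diagram chases rather than a single hard idea. As this is the classical Baer--Yoneda correspondence, the endgame is to cite \cite[Page 77]{w} for the formal statement, with the computation above serving as the guiding picture.
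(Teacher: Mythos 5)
The paper offers no proof of this Fact at all: it is stated as a known result with the citation to \cite[Page 77]{w}, so there is nothing in the paper to compare against step by step. Your sketch is a correct reconstruction of exactly the classical argument that underlies the cited result: realizing $\Ext(\mathbb{A},\mathbb{B})$ as $\cok\big(\Hom(\mathbb{P}_0,\mathbb{B})\to\Hom(\mathbb{P}_1,\mathbb{B})\big)$ via a short free resolution (legitimate here since subgroups of free abelian groups are free), defining the class of an extension by lifting through the projective $\mathbb{P}_0$, recovering extensions from cocycles by the pushout $(\mathbb{B}\oplus\mathbb{P}_0)/\{(\psi(x),-i(x))\}$, and reading off the splitting criterion from whether $\psi$ lies in $\rng(i^\ast)$. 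All the individual verifications you indicate (independence of the lift, the five-lemma remark, the section $s$ obtained from $\theta-g\eta$ factoring through $\mathbb{A}$) are correct. The only place where you are genuinely terse is injectivity of the correspondence --- showing that two extensions with the same class are equivalent requires comparing each to the pushout extension of its cocycle and checking that cohomologous cocycles give equivalent pushouts --- but this is routine diagram chasing of the kind you already flag, not a gap in the idea. In short: the paper delegates the proof to the literature, and your proposal supplies the standard proof that the literature gives.
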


\begin{definition}
	\label{k-free}
An abelian group $\mathbb{G}$ is called $\aleph_1$-free if every subgroup of $\mathbb{G}$ of cardinality
$< \aleph_1$, i.e., every countable subgroup, is free. More generally, an abelian group $\mathbb{G}$ is called $\lambda$-free if every subgroup of $\mathbb{G}$ of cardinality
$< \lambda$ is free.
\end{definition}

\begin{definition}
	\label{strongly}
	Let $\kappa$   be a regular cardinal. An abelian group $\mathbb{G}$ is said to be
	strongly $\kappa$-free if there is a set $\mathcal{S}$ of $<\kappa$-generated free subgroups of
	$\mathbb{G}$ containing {0}  such that for any subset $S$ of $\mathbb{G}$ of cardinality
$<\kappa$   and any $\mathbb{N}\in\mathcal{S}$, there is $\mathbb{L}\in\mathcal{S}$  such that $S\cup \mathbb{N}\subset \mathbb{L} $  and
	$\mathbb{L}/ \mathbb{N}$ is free.
\end{definition}

Also, by
a club subset of an uncountable regular cardinal $\kappa$ we mean a closed and unbounded
subset of $\kappa$.

\begin{definition}
Suppose $\kappa$ is an uncountable regular cardinal.
%\begin{enumerate}
 Let
 $\cD_\kappa$ denote the club filter on $\kappa$, i.e.,
  \begin{center}
$\cD_\kappa=\{A \subseteq \kappa$:  $A$ contains a club  subset of $\kappa$\}.
   \end{center}
      Let also $\mathcal{P}(\kappa)/\cD_\kappa$ denote  the resulting quotient Boolean algebra.
%\end{enumerate}
\end{definition}
It is easily seen that $\cD_\kappa$ is a normal $\kappa$-complete filter on $\kappa$ and that it is closed under diagonal intersections, i.e., if
  $A_i \in \cD_\kappa$, for $i<\kappa$, then their diagonal intersection
  $$\bigtriangleup_{i<\kappa}A_i=\{\xi<\kappa: \forall i<\xi, \xi \in A_i      \}$$
  is also in $\cD_\kappa$.
  This can be used to prove the following easy lemma.
 \begin{lemma}
	\label{a15}
	Suppose $\kappa$ is a regular uncountable cardinal, $\delta \leq \kappa$ and   let $\Gamma_i \in \cP(\kappa)/\cD_\kappa$ for $i < \delta$.  Then in the Boolean Algebra
	$\cP(\kappa)/\cD_\kappa,$ the sequence $\{\Gamma_i:i < \delta\}$ has a lub (least upper bound) $\Gamma$.
\end{lemma}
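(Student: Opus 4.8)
The plan is to write down the supremum explicitly as a \emph{diagonal union} of representatives and to verify minimality using the closure of $\cD_\kappa$ under diagonal intersections recorded just above the lemma. Recall first how the quotient is ordered: an element of $\cP(\kappa)/\cD_\kappa$ is a class $[A]$ with $A\subseteq\kappa$, where $[A]=[B]$ iff $\{\xi<\kappa:\xi\in A\Leftrightarrow\xi\in B\}\in\cD_\kappa$, and $[A]\le[B]$ iff $\{\xi<\kappa:\xi\in A\to\xi\in B\}\in\cD_\kappa$, i.e.\ iff $A\setminus B$ is nonstationary. So I would fix representatives $A_i\subseteq\kappa$ with $\Gamma_i=[A_i]$ for $i<\delta$, padding with $A_i=\emptyset$ for $\delta\le i<\kappa$ when $\delta<\kappa$, and set
$$A=\{\xi<\kappa:\text{there is }i<\xi\text{ with }\xi\in A_i\}.$$
The claim is that $\Gamma:=[A]$ is the required least upper bound.

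Checking that $\Gamma$ is an upper bound is routine. Fix $j<\delta$. If $\xi\in A_j$ and $\xi>j$, then $j<\xi$ witnesses $\xi\in A$, so $A_j\setminus A\subseteq j+1$ is a bounded, hence nonstationary, subset of $\kappa$; therefore $\Gamma_j=[A_j]\le[A]=\Gamma$ for every $j<\delta$.

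The substance is minimality, and here I would invoke normality. Let $[B]$ be any upper bound, so $A_i\setminus B$ is nonstationary for each $i<\delta$; choose a club $C_i\subseteq\kappa$ with $A_i\cap C_i\subseteq B$ (taking $C_i=\kappa$ for $i\ge\delta$ in the padded case). Since $\cD_\kappa$ is closed under diagonal intersections, the set $C=\bigtriangleup_{i<\kappa}C_i=\{\xi<\kappa:\forall i<\xi,\ \xi\in C_i\}$ is a club. Now if $\xi\in A\cap C$, pick $i<\xi$ with $\xi\in A_i$; since $\xi\in C$ and $i<\xi$ we also get $\xi\in C_i$, whence $\xi\in A_i\cap C_i\subseteq B$. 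Thus $A\cap C\subseteq B$, so $A\setminus B$ is disjoint from the club $C$ and is nonstationary, giving $\Gamma=[A]\le[B]$. This shows $[A]$ is the lub.

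The one point deserving attention — and the reason I do \emph{not} simply take $\bigl[\bigcup_{i<\delta}A_i\bigr]$ — is the case $\delta=\kappa$: a union of $\kappa$ many nonstationary sets can be stationary (e.g.\ $A_i=\{i\}$ has union $\kappa$ while each $[A_i]=0$), so the ordinary union overshoots the true supremum. The diagonal union corrects precisely this, and the closure of $\cD_\kappa$ under diagonal intersections is exactly what collapses the $\kappa$ many witnessing clubs $C_i$ to a single club in the minimality step. For $\delta<\kappa$ no diagonalization is really needed, since $\kappa$-completeness of the nonstationary ideal already makes $\bigcup_{i<\delta}A_i$ work; the diagonal union merely handles both cases uniformly.
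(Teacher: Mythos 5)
Your proof is correct, and it follows the same basic strategy as the paper: take the plain union when $\delta<\kappa$ and diagonalize when $\delta=\kappa$. In fact your writeup is the repaired version of the paper's argument. The paper's proof is one line: for $\delta=\kappa$ it declares that $\Gamma=A/\cD_\kappa$ works where $A=\bigtriangleup_{i<\kappa}A_i$ is ``the diagonal intersection''; but under the paper's own definition, $\bigtriangleup_{i<\kappa}A_i=\{\xi<\kappa:\forall i<\xi,\ \xi\in A_i\}$, and that set is not even an upper bound --- if the $A_i$ are pairwise disjoint stationary sets, the diagonal intersection is contained in $\{0,1\}$, while each $[A_i]$ is nonzero. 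What is needed is the diagonal \emph{union} $\{\xi<\kappa:\exists i<\xi,\ \xi\in A_i\}$, which is exactly the set $A$ you construct; your minimality argument (choosing clubs $C_i$ with $A_i\cap C_i\subseteq B$ and intersecting them diagonally) is the verification the paper omits and is where the closure of $\cD_\kappa$ under diagonal intersections is genuinely used. Your closing remark that $\kappa$-completeness suffices for $\delta<\kappa$, and your counterexample $A_i=\{i\}$ showing the plain union fails at $\delta=\kappa$, both match the intent of the paper's case split; the only substantive difference is that your proof is actually correct as stated, while the paper's, read literally, takes the wrong diagonal operation.
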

\begin{proof}
For each $i<\delta$ let $A_i \subseteq \kappa$ be such that
 $\Gamma_i = A_i/\cD_\kappa$. If $\delta < \kappa,$ then $\Gamma = A/ \cD_\kappa$ is as required where
	$A=\bigcup\limits_{i<\delta} A_i$, and if $\delta=\kappa$, then $\Gamma = A/\cD_\kappa$
	is as required where $A=\bigtriangleup_{i<\kappa} A_i$ is the diagonal intersection of the sets $A_i, i<\kappa.$
\end{proof}

The following definition plays an important  role in the sequel.
\begin{definition}
	\label{filter}
	Let $\kappa$   be a regular cardinal. If $\mathbb{G}$ is a $\leq\kappa$-generated abelian group, a $\kappa$-filtration of $\mathbb{G}$
	is a sequence $\{\mathbb{G}_{\nu}:\nu < \kappa\}$ of subgroups of $\mathbb{G}$ whose union is $\mathbb{G}$
	 such that for all $\nu<\kappa:$
\begin{enumerate}
	\item[$(a)$] $\mathbb{G}_{\nu}$ is a $< \kappa$-generated subgroup of $\mathbb{G}$;
	\item[$(b)$] if $\mu< \nu$, then $ \mathbb{G}_{\mu}\subset \mathbb{G}_{\nu}$;
	\item[$(c)$] if $\nu$ is a limit ordinal, then $ \mathbb{G}_{\nu}=\bigcup_{\mu<\nu}\mathbb{G}_{\mu} $ i.e., the sequence is continuous.
\end{enumerate}	
\end{definition}

 It is easily seen that if $\{\mathbb{G}_{\nu}:\nu < \kappa\}$  and $\{\mathbb{H}_{\nu}:\nu < \kappa\}$ are two $\kappa$-filtrations  of a
group $\mathbb{G}$, then the set
\[
\{\nu < \kappa: \mathbb{G}_\nu=\mathbb{H}_\nu    \}
\]
contains a club  subset of $\kappa,$ in particular, modulo the club filter $\cD_\kappa,$
the choice of the $\kappa$-filtration does not matter. This observation makes the following definition well-defined.
\begin{definition}
	\label{a13}
	Let $\lambda$ be an uncountable regular cardinals.
\begin{enumerate}	
	\item
	 If $\mathbb{G}$ is an abelian group of cardinality $\lambda$ and $\langle
	\mathbb{G}_\alpha:\alpha < \lambda\rangle$ is a filtration of $\mathbb{G}$, then
	\[
	\Gamma(\mathbb{G},\bar{\mathbb{G}}) = \{\delta < \lambda:\mathbb{G}/\mathbb{G}_\delta \text{ is not
	} \lambda\text{-free}\}.
	\]
	\item Let	
$$\Gamma(\mathbb{G}) = \Gamma(\mathbb{G},\bar{\mathbb{G}})/\cD_\lambda$$
 for some (and hence every)
	filtration $\bar{\mathbb{G}}$ of $\mathbb{G}.$
\end{enumerate}
\end{definition} We recall that $\Gamma(\mathbb{G})$ is called the $\Gamma$-invariant of the
group $\mathbb{G}$, and refer to \cite[ §IV.1] {EM02} for more details and properties of this invariant.
The following lemma  gives a combinatorial characterization for $\lambda$-free groups to be free.
\begin{lemma} (\cite[\text{Ch} IV, Proposition 1.7]{EM02})
\label{comch}
Let $\lambda$ be an uncountable regular cardinals and let $\mathbb G$ be a $\lambda$-free  abelian
group of cardinality $\lambda.$ The following are equivalent:
\begin{enumerate}
\item
 $\mathbb G$ is free,

\item  $\mathbb G$ has a filtration $\langle
	\mathbb{G}_\alpha:\alpha < \lambda\rangle$ such that for all $\alpha < \lambda, \mathbb G_{\alpha+1}/\mathbb{G}_\alpha$ is free,

\item $\Gamma(\mathbb{G})=\emptyset/ \mathcal D_\lambda$.
\end{enumerate}These mean that $\Gamma(\mathbb{G},\bar{\mathbb{G}})$ is non-stationary for some (and hence every)
filtration $\langle
\mathbb{G}_\alpha:\alpha < \lambda\rangle$ of $\mathbb G.$
\end{lemma}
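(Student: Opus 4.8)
The plan is to run the cycle $(1)\Rightarrow(2)\Rightarrow(1)$ together with $(1)\Leftrightarrow(3)$, after which the closing sentence needs no extra work: by Definition \ref{a13} we have $\Gamma(\mathbb G)=\Gamma(\mathbb G,\bar{\mathbb G})/\cD_\lambda$, and an element $A/\cD_\lambda$ of $\cP(\lambda)/\cD_\lambda$ equals $\emptyset/\cD_\lambda$ exactly when $A$ lies in the ideal dual to the club filter, i.e.\ exactly when $A$ is non-stationary; the phrase ``for some (and hence every) filtration'' is then guaranteed by the club-invariance of $\Gamma$ recorded before Definition \ref{a13}. For $(1)\Rightarrow(2)$ I would fix a basis $\{x_i:i<\lambda\}$ of the free group $\mathbb G$ and set $\mathbb G_\alpha=\langle x_i:i<\alpha\rangle$. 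Each $\mathbb G_\alpha$ is $<\lambda$-generated, the chain is increasing and continuous at limits, its union is $\mathbb G$, and $\mathbb G_{\alpha+1}/\mathbb G_\alpha\cong\mathbb Z$ is free; so $\langle\mathbb G_\alpha:\alpha<\lambda\rangle$ is a filtration witnessing $(2)$.

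The substantive step is $(2)\Rightarrow(1)$. Given a filtration $\langle\mathbb G_\alpha:\alpha<\lambda\rangle$ with each $\mathbb G_{\alpha+1}/\mathbb G_\alpha$ free, I would build a basis of $\mathbb G$ by recursion on $\alpha$. Since $\mathbb G$ is $\lambda$-free, $\mathbb G_0$ is a $<\lambda$-generated subgroup, hence free, and I fix a basis $B_{-1}$ of it. At a successor, freeness of $\mathbb G_{\alpha+1}/\mathbb G_\alpha$ makes the sequence $0\to\mathbb G_\alpha\to\mathbb G_{\alpha+1}\to\mathbb G_{\alpha+1}/\mathbb G_\alpha\to0$ split (the quotient being free hence projective, equivalently its class vanishes in $\Ext(\mathbb G_{\alpha+1}/\mathbb G_\alpha,\mathbb G_\alpha)$; cf.\ Fact \ref{yoneda}), so I choose $B_\alpha\subseteq\mathbb G_{\alpha+1}$ mapping bijectively onto a basis of $\mathbb G_{\alpha+1}/\mathbb G_\alpha$, giving $\mathbb G_{\alpha+1}=\mathbb G_\alpha\oplus\langle B_\alpha\rangle$ with $\langle B_\alpha\rangle$ free on $B_\alpha$. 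The claim to prove by induction is that $B_{-1}\cup\bigcup_{\beta<\alpha}B_\beta$ is a basis of $\mathbb G_\alpha$ for every $\alpha$: the successor case is exactly the splitting just described, while the limit case uses the continuity clause (c) of Definition \ref{filter}, so that any element of, or finite relation within, $\mathbb G_\nu=\bigcup_{\mu<\nu}\mathbb G_\mu$ already occurs in some $\mathbb G_\mu$ and is handled by the inductive hypothesis. Setting $\alpha=\lambda$ and invoking $\mathbb G=\bigcup_{\alpha<\lambda}\mathbb G_\alpha$ yields that $B_{-1}\cup\bigcup_{\alpha<\lambda}B_\alpha$ is a basis of $\mathbb G$, so $\mathbb G$ is free. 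I expect this gluing of a smooth chain with free quotients to be the only genuinely non-formal point, with all the delicacy concentrated at limit stages.

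For $(1)\Leftrightarrow(3)$ I would use club-invariance once more. If $\mathbb G$ is free, then for the basis filtration of $(1)\Rightarrow(2)$ we have $\mathbb G/\mathbb G_\delta\cong\langle x_i:i\ge\delta\rangle$, which is free and a fortiori $\lambda$-free for every $\delta$; hence $\Gamma(\mathbb G,\bar{\mathbb G})=\emptyset$ for that filtration, and since any two filtrations of $\mathbb G$ agree on a club, $\Gamma(\mathbb G,\bar{\mathbb G})$ is non-stationary for every filtration, i.e.\ $(3)$ holds. Conversely, assume $(3)$, so $E:=\Gamma(\mathbb G,\bar{\mathbb G})$ is non-stationary and there is a club $C\subseteq\lambda$ with $C\cap E=\emptyset$; let $\langle c_i:i<\lambda\rangle$ enumerate $C$ increasingly. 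Because $C$ is closed, $\langle\mathbb G_{c_i}:i<\lambda\rangle$ is again a filtration of $\mathbb G$, and for each $i$ the group $\mathbb G_{c_{i+1}}/\mathbb G_{c_i}$ is a subgroup of cardinality $<\lambda$ of $\mathbb G/\mathbb G_{c_i}$, which is $\lambda$-free because $c_i\notin E$; hence $\mathbb G_{c_{i+1}}/\mathbb G_{c_i}$ is free. Thus the reindexed filtration satisfies $(2)$, and $(2)\Rightarrow(1)$ gives that $\mathbb G$ is free. The only thing to check carefully here is that the reindexed chain meets all requirements of Definition \ref{filter}, which is immediate from $C$ being a club.
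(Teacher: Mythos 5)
Your proposal is correct. Note that the paper itself gives no proof of this lemma: it is stated with a citation to Eklof--Mekler \cite[Ch.~IV, Proposition 1.7]{EM02}, so there is no internal argument to compare against; your write-up is essentially a reconstruction of the standard proof from the cited source. The two substantive points are exactly where you placed them: the transfinite gluing of bases along a filtration with free successive quotients (where $\lambda$-freeness is used only to get $\mathbb G_0$ free, and limit stages are handled by continuity plus the fact that a finite relation lives in some earlier $\mathbb G_\mu$), and the club reindexing for $(3)\Rightarrow(1)$, where closedness of $C$ guarantees $\langle \mathbb G_{c_i}:i<\lambda\rangle$ is again a filtration and $c_i\notin\Gamma(\mathbb G,\bar{\mathbb G})$ guarantees each $\mathbb G_{c_{i+1}}/\mathbb G_{c_i}$ is free, being a $<\lambda$-sized subgroup of the $\lambda$-free group $\mathbb G/\mathbb G_{c_i}$. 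Your handling of the closing sentence via the club-invariance observation preceding Definition \ref{a13}, and of the identification of $\emptyset/\cD_\lambda$ with non-stationarity in $\cP(\lambda)/\cD_\lambda$, is also correct.
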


\section{A realization criteria}

Our main result in this section is Theorem \ref{a2}. Let us start by some lemmas and definitions.
\iffalse
\begin{definition}
	Let $R = (V;E)$ be a directed graph. Then $R$ is realized as an Ext-graph of
	abelian  groups provided that there exists a family of abelian  groups, $(G_v :	 v \in V )$  such
	that for all $v,w\in V$  we have $\Ext(G_v,G_w)=0$ if and only if $(v,w)\in E$.
\end{definition}
\fi

\begin{lemma}\label{aleffree}(See \cite[Ex. IV.22]{EM02})
	If $\mathbb{G}$ is the union of a continuous chain $\{\mathbb{G}_\alpha : \alpha \leq \beta\}$ of abelian
	groups such that $\mathbb{G}_0$ and $\mathbb{G}_{\alpha
		+1}/\mathbb{G}_{\alpha}$ are $\aleph_1$-free for all $\alpha + 1 < \beta$,
	then $\mathbb{G}$ is $\aleph_1$-free.
\end{lemma}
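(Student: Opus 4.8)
The plan is to verify the defining property directly: fix an arbitrary countable subgroup $C \subseteq \mathbb{G}$ and show that $C$ is free. The key device is to intersect the given chain with $C$. So first I would set $C_\alpha := C \cap \mathbb{G}_\alpha$ for $\alpha \le \beta$. Since the chain $\{\mathbb{G}_\alpha\}$ is increasing and continuous and $\bigcup_{\alpha\le\beta}\mathbb{G}_\alpha = \mathbb{G} \supseteq C$, the family $\{C_\alpha : \alpha \le \beta\}$ is an increasing continuous chain of subgroups of $C$ with $C_\beta = C$; continuity at a limit $\delta$ follows from $C \cap \bigcup_{\alpha<\delta}\mathbb{G}_\alpha = \bigcup_{\alpha<\delta}(C\cap\mathbb{G}_\alpha)$.

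Next I would check that this chain has free bottom and free successive quotients. The bottom $C_0 = C \cap \mathbb{G}_0$ is a countable subgroup of the $\aleph_1$-free group $\mathbb{G}_0$, hence free. For a successor step, the second isomorphism theorem provides an injection
\[
C_{\alpha+1}/C_\alpha = (C\cap\mathbb{G}_{\alpha+1})/(C\cap\mathbb{G}_\alpha) \hookrightarrow \mathbb{G}_{\alpha+1}/\mathbb{G}_\alpha,
\]
whose image is a countable subgroup of the $\aleph_1$-free group $\mathbb{G}_{\alpha+1}/\mathbb{G}_\alpha$; therefore $C_{\alpha+1}/C_\alpha$ is free. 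This is the crucial point of the argument: passing to $C$ makes every object countable, which upgrades the hypothesis ``$\aleph_1$-free quotient'' to ``free quotient'' and thereby circumvents the fact that a quotient of an $\aleph_1$-free group need not itself be $\aleph_1$-free.

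Finally I would assemble $C$ as a free group. Since each quotient $C_{\alpha+1}/C_\alpha$ is free, hence projective, the short exact sequence $0 \to C_\alpha \to C_{\alpha+1} \to C_{\alpha+1}/C_\alpha \to 0$ splits, so I may fix a free subgroup $F_\alpha \subseteq C_{\alpha+1}$ with $C_{\alpha+1} = C_\alpha \oplus F_\alpha$. By transfinite induction on $\gamma \le \beta$ I would prove the internal decomposition $C_\gamma = C_0 \oplus \bigoplus_{\alpha<\gamma} F_\alpha$: the successor case is immediate from the chosen splitting, and the limit case follows because every element of $C_\gamma$ already lies in some $C_{\gamma'}$ with $\gamma' < \gamma$, so both the spanning and the independence survive the passage to the union. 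Taking $\gamma = \beta$ yields $C = C_0 \oplus \bigoplus_{\alpha<\beta} F_\alpha$, a direct sum of free groups, hence free. As $C$ was an arbitrary countable subgroup, $\mathbb{G}$ is $\aleph_1$-free. The argument is essentially routine; the only place demanding a little care is the limit stage of this last induction, where one must confirm that the internal direct sum is preserved under the continuous union.
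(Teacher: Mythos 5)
Your proof is correct and is precisely the standard argument for this fact (intersect the chain with a countable subgroup $C$, use the second isomorphism theorem to embed $(C\cap\mathbb{G}_{\alpha+1})/(C\cap\mathbb{G}_\alpha)$ into $\mathbb{G}_{\alpha+1}/\mathbb{G}_\alpha$ so that countability upgrades $\aleph_1$-free to free, then split and assemble); the paper itself gives no proof at all, deferring to the cited exercise in Eklof--Mekler, and your argument is exactly the one that exercise intends. One remark: your proof (necessarily) invokes the quotient hypothesis at every successor step $\alpha+1\le\beta$, whereas the lemma as printed assumes it only for $\alpha+1<\beta$ --- an off-by-one slip in the paper's statement, which is false as literally written when $\beta$ is a successor (e.g.\ the chain $\mathbb{Z}\subseteq\mathbb{Q}$ with $\beta=1$), so the reading your proof uses is the intended one.
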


More generally, the following
holds:

\begin{lemma}\label{ff} (See \cite[Pages 112-113]{fuchs})
	Let $\kappa<\lambda$ be infinite cardinals. %The following holds: 
\begin{enumerate}
\item[$(i)$] $\lambda$-free implies $\kappa$-free. %, then $\lambda$-free implies $\kappa$-free.
\item[$(ii)$] Subgroups and direct sums of $\kappa$-free are $\kappa$-free % $\kappa$-free groups are $\kappa$-free.
\item[$(iii)$]   Extension of $\kappa$-free group is $\kappa$-free.
		\item[$(iv)$] Let   $0=\mathbb{G}_0\subseteq \ldots \subseteq \mathbb{G}_i\subseteq \ldots$  be a smooth chain of groups with union $\mathbb{G}$ such
 	that all   $ \mathbb{G}_i /\mathbb{G}_{i+1}$ are $\kappa$-free.  Then $\mathbb{G}$ is   $\kappa$-free.
	\end{enumerate}
\end{lemma}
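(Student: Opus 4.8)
The plan is to reduce all four parts to three classical facts about $\mathbb{Z}$-modules, which I will use freely: every subgroup of a free abelian group is free (as $\mathbb{Z}$ is a PID), any direct sum of free groups is free, and a short exact sequence with free (hence projective) cokernel splits. Throughout I appeal only to Definition \ref{k-free}, that a group is $\kappa$-free precisely when each of its subgroups of cardinality $<\kappa$ is free. Part $(i)$ is then immediate: if $\mathbb{G}$ is $\lambda$-free and $\mathbb{H}\subseteq\mathbb{G}$ has $|\mathbb{H}|<\kappa<\lambda$, then $|\mathbb{H}|<\lambda$, so $\mathbb{H}$ is free.

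For the subgroup half of $(ii)$, a $<\kappa$-subgroup of a subgroup $\mathbb{H}\subseteq\mathbb{G}$ is a $<\kappa$-subgroup of $\mathbb{G}$, hence free. For the direct-sum half, given $\mathbb{H}\subseteq\bigoplus_{i\in I}\mathbb{G}_i$ with $|\mathbb{H}|<\kappa$, I would pass to the support $J=\bigcup_{h\in\mathbb{H}}\mathrm{supp}(h)$, which satisfies $|J|<\kappa$ because each individual support is finite; then $\mathbb{H}\subseteq\bigoplus_{i\in J}\pi_i(\mathbb{H})$, where each projection $\pi_i(\mathbb{H})$ is a $<\kappa$-subgroup of the $\kappa$-free $\mathbb{G}_i$ and so is free. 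The finite-support direct sum of these free groups is free, and $\mathbb{H}$, being a subgroup of a free group, is free. For $(iii)$ (which I read as: if $0\to\mathbb{A}\to\mathbb{B}\to\mathbb{C}\to 0$ is exact with $\mathbb{A}$ and $\mathbb{C}$ both $\kappa$-free, then $\mathbb{B}$ is $\kappa$-free), take $\mathbb{H}\subseteq\mathbb{B}$ with $|\mathbb{H}|<\kappa$ and restrict the sequence to $0\to\mathbb{H}\cap\mathbb{A}\to\mathbb{H}\to\pi(\mathbb{H})\to 0$. Here $\mathbb{H}\cap\mathbb{A}$ is a $<\kappa$-subgroup of $\mathbb{A}$, hence free, and $\pi(\mathbb{H})$ is a $<\kappa$-subgroup of $\mathbb{C}$, hence free; freeness of $\pi(\mathbb{H})$ splits the sequence, so $\mathbb{H}\cong(\mathbb{H}\cap\mathbb{A})\oplus\pi(\mathbb{H})$ is free.

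Part $(iv)$ is the one needing care, and I read the successive quotients as $\mathbb{G}_{i+1}/\mathbb{G}_i$ (the indices in the statement appear transposed). The naive idea---that a $<\kappa$-subgroup $\mathbb{H}$ of $\mathbb{G}=\bigcup_i\mathbb{G}_i$ sits inside a single term $\mathbb{G}_i$---fails at limit stages of cofinality $<\kappa$, so I will avoid it. Instead I intersect the entire chain with $\mathbb{H}$: the chain $\langle\mathbb{H}\cap\mathbb{G}_i\rangle$ is again smooth (continuity is preserved by intersection), has union $\mathbb{H}$, and its successive quotients $(\mathbb{H}\cap\mathbb{G}_{i+1})/(\mathbb{H}\cap\mathbb{G}_i)$ embed canonically into $\mathbb{G}_{i+1}/\mathbb{G}_i$. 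The decisive observation is that these quotients have cardinality $<\kappa$, so as $<\kappa$-subgroups of a $\kappa$-free group they are \emph{outright free}, not merely $\kappa$-free. Thus $\mathbb{H}$ is the union of a smooth chain starting at $0$ with free successive quotients, and splitting off a free complement at each successor stage and collecting these complements through the limits exhibits $\mathbb{H}$ as an internal direct sum of free groups, hence free.

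I expect this limit-stage subtlety---replacing the false ``contained in one term'' step by the intersection argument that \emph{upgrades} the $\kappa$-freeness of the quotients to genuine freeness---to be the main obstacle; everything else is a routine application of the three classical facts above. The argument closely parallels Lemma \ref{aleffree} (the $\aleph_1$-free instance) and the free-union criterion underlying Lemma \ref{comch}, and indeed once $(iii)$ is in hand one could alternatively run a transfinite induction showing each $\mathbb{G}_i$ is $\kappa$-free, but the direct intersection argument is cleaner since it never requires the intermediate terms $\mathbb{G}_i$ themselves to be $\kappa$-free.
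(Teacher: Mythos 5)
Your proof is correct, and it is essentially the classical argument: the paper offers no proof of this lemma at all (it is quoted from Fuchs, pp.~112--113), and your treatment of (i)--(iv) --- the support/projection reduction for direct sums, splitting off the free image for extensions, and the intersection-filtration (Pontryagin-style) argument for (iv), with quotients upgraded to genuinely free by their cardinality $<\kappa$ --- is exactly what the cited source and Eklof--Mekler do. You also correctly repaired the typo $\mathbb{G}_i/\mathbb{G}_{i+1}$ (which should read $\mathbb{G}_{i+1}/\mathbb{G}_i$) and correctly identified why the naive ``$\mathbb{H}$ lies in some $\mathbb{G}_i$'' shortcut fails at limits of small cofinality.
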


\begin{definition}
	\label{a1d}Let $\cG$ be a set or class of abelian groups.
	\begin{enumerate}
		\item  We say $\bar{\mathbb{L}}$ is a
		construction by $\cG$ over $\mathbb{G}$ when:
		%\mn
		\begin{enumerate}
			\item[$(a)$]  $\bar{\mathbb{L}} = \langle \mathbb{L}_\varepsilon:\varepsilon \le
			\varepsilon(*)\rangle$ is a $\subseteq$-increasing and continuous sequence of abelian groups,
			%\sn
			%\item[$(b)$]  $I$ is $\subseteq$-increasing continuous
			%\sn
			\item[$(b)$]  $\mathbb{L}_0 = \mathbb{G}$,
			%\sn
			\item[$(c)$]  for every $\varepsilon < \varepsilon(*),\mathbb{L}_{\varepsilon
				+1}/\mathbb{L}_\varepsilon$ is free or is isomorphic to some member of $\cG$.
		\end{enumerate}
		%\mn
		\item Omitting ``over $\mathbb{G}$" means for $\mathbb{G}=\{0\}$.
		% so in
		%$(t),L_{\varepsilon(*)}/L_0$ is constructible by $\cG$.

		%\noindent
		\item We say $\mathbb{L}$ is constructible by $\cG$ (over $\mathbb{G}$) when for some
		$\bar{\mathbb{L}} = \langle \mathbb{L}_\varepsilon:\varepsilon \le
		\varepsilon(*)\rangle$,   $\bar{\mathbb{L}}$ is a construction by $\cG$ (over
		$\mathbb{G}$) and $\mathbb{L}=\mathbb{L}_{\varepsilon(*)}$.
	\end{enumerate}
\end{definition}

\begin{notation}
	Suppose we have the following data of abelian groups and homomorphisms:
	$$
	\begin{CD}
	\mathbb{A}\\
	@AAA  \\
	\mathbb{B} @>>> \mathbb{C}, \\
	\end{CD}
	$$
	We denote the corresponding pushout by
	$\mathbb{A}\oplus_{\mathbb{B}} \mathbb{C}$.
\end{notation}

The next result gives sufficient conditions for representing bipartite graphs using the functor Ext.
\begin{theorem}
\label{a2}
Let $\mu_1, \mu_2 \leq 2^\lambda$  be such that   $ \cf(\mu_2)> \lambda$  and let $R \subseteq \mu_1 \times \mu_2$. Suppose the following assumptions are satisfied:
\mn
\begin{enumerate}
\item[$\boxtimes^1_{\lambda,\mu_1}$]
\begin{enumerate} \item[(a)]$ \bar{\mathbb{G}} = \langle
\mathbb{G}_\alpha:\alpha < \mu_1\rangle$ and $\bar{\mathbb{G}}^\iota =
\langle \mathbb{G}^\iota_\alpha:\alpha < \mu_1\rangle$ for $\iota =1, 2$,
are sequences of abelian groups,
%\sn
\item[(b)] for each $\alpha$,  $\mathbb{G}_\alpha$ is an $\aleph_1$-free abelian
group of cardinality $\lambda$ and $\mathbb{G}_\alpha =
\mathbb{G}^2_\alpha/\mathbb{G}^1_\alpha,$ where $\mathbb{G}^1_\alpha \subseteq \mathbb{G}^2_\alpha$ are free abelian groups
cardinality $\lambda$,
%\sn
\item[(c)]  if $\alpha < \mu_1$ and $\mathbb{L}$ is
  constructible by $\{\mathbb{G}_\gamma:\gamma \in \mu_1 \setminus \alpha\}$
  over $\mathbb{G}^1_\alpha$, then there is no homomorphism ${\bf g}:
  \mathbb{G}^2_\alpha\to\mathbb{L}$ extending $\id_{\mathbb{G}^1_\alpha}$:
  $$\xymatrix{
  	&&  \mathbb{G}^1_\alpha \ar[r]^{\subseteq}\ar[d]_{\id}&\mathbb{G}^2_\alpha\ar[d]^{\nexists{\bf g}}\\
  	&& \mathbb{G}^1_\alpha \ar[r]^{\subseteq}& \mathbb{L}
  	&&&}$$

  \end{enumerate}
%\sn
%\item[$\boxtimes_2$]   $(d) \quad R \subseteq \mu_1 \times \mu_2$ and
%$\mu_2 = 2^\lambda$ (or $\le \mu_2$).
\end{enumerate}
%\mn
Then there exists a sequence $\bar{\mathbb{K}} = \langle \mathbb{K}_\beta:\beta
< \mu_2\rangle$ equipped with the following three properties:
%\mn
\begin{enumerate}
	\item[$\boxtimes^2_{\lambda,\mu_1}$]
\begin{enumerate}
	\item[$(\alpha)$]  $ \mathbb{K}_\beta$ is an $\aleph_1$-free abelian group  of
	cardinality $2^\lambda$ for each $\beta
	< \mu_2$,
	%\sn
	\item[$(\beta)$]   $ \Ext(\mathbb{G}_\alpha, \mathbb{K}_\beta) = 0$ iff
	$\alpha R \beta$,
	\item[$(\gamma)$]   if every $\mathbb{G}_\alpha$ is $\lambda$-free
	then every $\mathbb{K}_\alpha$ is $\lambda$-free as well.
\end{enumerate}\end{enumerate}
\end{theorem}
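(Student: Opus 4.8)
The plan is to reduce the vanishing of $\Ext(\mathbb{G}_\alpha,\mathbb{K}_\beta)$ to a concrete extension property and then to manufacture each $\mathbb{K}_\beta$ by a transfinite pushout construction in the spirit of Quillen's small object argument. First I would record the homological reformulation. By assumption (b) the short exact sequence $0\to\mathbb{G}^1_\alpha\to\mathbb{G}^2_\alpha\to\mathbb{G}_\alpha\to 0$ is a free (hence projective) resolution of $\mathbb{G}_\alpha$; applying $\Hom(-,\mathbb{K})$ and using that $\mathbb{G}^2_\alpha$ is free (so $\Ext(\mathbb{G}^2_\alpha,\mathbb{K})=0$) yields
\[
\Ext(\mathbb{G}_\alpha,\mathbb{K})\;\cong\;\cok\big(\Hom(\mathbb{G}^2_\alpha,\mathbb{K})\xrightarrow{\ \rest\ }\Hom(\mathbb{G}^1_\alpha,\mathbb{K})\big).
\]
Thus $\Ext(\mathbb{G}_\alpha,\mathbb{K})=0$ if and only if \emph{every} homomorphism $\mathbb{G}^1_\alpha\to\mathbb{K}$ extends along $\mathbb{G}^1_\alpha\subseteq\mathbb{G}^2_\alpha$ to a homomorphism $\mathbb{G}^2_\alpha\to\mathbb{K}$. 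This is the exact property I will force for $\alpha R\beta$ and sabotage for $\neg(\alpha R\beta)$.

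Next comes the construction. Fix $\beta<\mu_2$ and set $R_\beta=\{\alpha<\mu_1:\alpha R\beta\}$. Pick a regular $\theta$ with $\lambda<\theta\le 2^\lambda$ (the hypothesis $\cf(\mu_2)>\lambda$ supplies such a length when it is tied to $\mu_2$). I would build a $\subseteq$-increasing continuous chain $\langle\mathbb{K}_{\beta,i}:i\le\theta\rangle$ as follows: let $\mathbb{K}_{\beta,0}$ be a free group of rank $2^\lambda$ into which, for every $\alpha\notin R_\beta$, a distinguished copy of $\mathbb{G}^1_\alpha$ is embedded as a direct summand. At a successor stage $i+1$, for each $\gamma\in R_\beta$ and each $f\in\Hom(\mathbb{G}^1_\gamma,\mathbb{K}_{\beta,i})$ I form the pushout of $f$ along the inclusion $\mathbb{G}^1_\gamma\subseteq\mathbb{G}^2_\gamma$, and take $\mathbb{K}_{\beta,i+1}$ to be the pushout amalgamating this entire family. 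At limits take unions, and set $\mathbb{K}_\beta=\mathbb{K}_{\beta,\theta}$. Since pushouts of monomorphisms are monomorphisms, the chain is genuinely increasing, and each $\mathbb{K}_{\beta,i+1}/\mathbb{K}_{\beta,i}$ is a direct sum of copies of the groups $\mathbb{G}_\gamma=\mathbb{G}^2_\gamma/\mathbb{G}^1_\gamma$ with $\gamma\in R_\beta$.

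Then I would verify property $(\beta)$ in both directions. For $\alpha\in R_\beta$: any $g\colon\mathbb{G}^1_\alpha\to\mathbb{K}_\beta$ has image of size $\le\lambda<\cf(\theta)$, so $g$ factors through some $\mathbb{K}_{\beta,i}$; that instance of $g$ was among the maps treated at stage $i$, so it extends to $\mathbb{G}^2_\alpha\to\mathbb{K}_{\beta,i+1}\subseteq\mathbb{K}_\beta$, whence $\Ext(\mathbb{G}_\alpha,\mathbb{K}_\beta)=0$ by the reformulation. For $\alpha\notin R_\beta$: the construction exhibits $\mathbb{K}_\beta$ as constructible (Definition \ref{a1d}) over the distinguished copy of $\mathbb{G}^1_\alpha$ by $\{\mathbb{G}_\gamma:\gamma\in R_\beta\}$ together with free groups, and since $\alpha\notin R_\beta$ every $\gamma\in R_\beta$ satisfies $\gamma\neq\alpha$, so this family is among the groups allowed in assumption (c). Hence (c) forbids any extension of $\id_{\mathbb{G}^1_\alpha}$ to $\mathbb{G}^2_\alpha\to\mathbb{K}_\beta$; the distinguished inclusion therefore lies outside the image of the restriction map, giving $\Ext(\mathbb{G}_\alpha,\mathbb{K}_\beta)\neq 0$.

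Finally, $\mathbb{K}_{\beta,0}$ is free and every successive quotient is a direct sum of $\aleph_1$-free groups, hence $\aleph_1$-free by Lemma \ref{ff}(ii); by Lemma \ref{aleffree} (equivalently Lemma \ref{ff}(iv)) the union $\mathbb{K}_\beta$ is $\aleph_1$-free, and running the identical argument with $\lambda$ in place of $\aleph_1$ yields $(\gamma)$ when all $\mathbb{G}_\alpha$ are $\lambda$-free. A routine count ($\le(2^\lambda)^\lambda=2^\lambda$ homomorphisms at each of $\le 2^\lambda$ stages, each contributing a group of size $\lambda$, over a free base of size $2^\lambda$) gives $|\mathbb{K}_\beta|=2^\lambda$, establishing $(\alpha)$. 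The main obstacle, I expect, is twofold: verifying that the iterated amalgamated pushout genuinely assembles into a \emph{single} construction over $\mathbb{G}^1_\alpha$ of the type permitted by Definition \ref{a1d}—so that hypothesis (c) can be invoked—which forces one to arrange the free base together with all glued copies into one $\subseteq$-increasing continuous sequence with the correct successive quotients and to reconcile this with the precise index set $\mu_1\setminus\alpha$ appearing in (c); and guaranteeing the reflection statement that every map out of $\mathbb{G}^1_\alpha$ factors through some stage, which is exactly where $\cf(\theta)>\lambda$ is essential. The remaining preservation and cardinality computations are comparatively routine.
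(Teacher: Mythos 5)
Your proposal is correct and follows the same overall strategy as the paper's proof: the homological reformulation of $\Ext(\mathbb{G}_\alpha,\mathbb{K})=0$ as the extension property along $\mathbb{G}^1_\alpha\subseteq\mathbb{G}^2_\alpha$, a transfinite pushout chain to realize that property when $\alpha R\beta$, and a free starting group containing $\mathbb{G}^1_\alpha$ as a direct summand (the paper takes exactly $\mathbb{K}_{0,\beta}=\bigoplus\{\mathbb{G}^1_\alpha:\neg(\alpha R\beta)\}$) together with hypothesis (c) to block splitting when $\neg(\alpha R\beta)$. The mechanics differ in two respects worth recording. First, the paper never amalgamates: using a partition $\langle\cU_\varepsilon:\varepsilon<2^\lambda\rangle$ of $2^\lambda$ with $\cU_\varepsilon\subseteq[\varepsilon,2^\lambda)$ as bookkeeping, it treats exactly one pair $(\alpha_{\beta,\zeta},h_{\beta,\zeta})$ per successor stage, so each quotient $\mathbb{K}_{\zeta+1,\beta}/\mathbb{K}_{\zeta,\beta}$ is a single copy of some $\mathbb{G}_\gamma$ and the chain is literally a construction in the sense of Definition \ref{a1d}; your amalgamated pushout produces successive quotients that are direct sums of the $\mathbb{G}_\gamma$'s, so the obstacle you flag --- refining each fat pushout into a well-ordered sequence of single pushouts --- is genuinely needed before (c) can be quoted, though it is routine. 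Second, for the non-vanishing direction the paper does not apply (c) to $\mathbb{K}_\beta$ itself: it first extracts a set $u\subseteq 2^\lambda$ of cardinality $\lambda$ closed under the ranges of the relevant $h_{\beta,\zeta}$ and $h^*_{\beta,\zeta}$ and containing $\Rang(g^2_\alpha)$, and assembles from it a construction $\bar{\mathbb{L}}$ of length $\le\lambda$ over $\mathbb{G}^1_\alpha$ that contradicts (c); since (c) as stated carries no length or cardinality restriction, your direct application of (c) to all of $\mathbb{K}_\beta$ (once refined into a construction) is legitimate and in fact shortcuts the most technical paragraph of the paper's argument, whose reduction to size $\lambda$ mirrors how (c) is verified in Sections 4 and 5. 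Two smaller points: your reading of the index set in (c) as excluding only $\alpha$ itself is the one the paper's own proof uses (the quotients that occur are $\mathbb{G}_\gamma$ with $\gamma R\beta$, hence $\gamma\neq\alpha$, but not necessarily $\gamma\geq\alpha$); and your chain length $\theta$, an arbitrary regular cardinal in $(\lambda,2^\lambda]$, works just as well as the paper's $2^\lambda$, because all the factoring argument needs is that the length have cofinality greater than $\lambda$ --- the paper cites $\cf(\mu_2)>\lambda$ at this step, though what is actually used is $\cf(2^\lambda)>\lambda$.
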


\begin{proof}%We prove all claims at the same time.
Let $\langle \cU_\varepsilon:\varepsilon < 2^\lambda\rangle$ be a
partition of $2^\lambda$ such that $\cU_\varepsilon \subseteq
[\varepsilon,2^\lambda)$ has cardinality $2^\lambda$.  We  claim that there are
sequences
$\bar{\mathbb{K}}_\varepsilon$,   $\bar{\mathbf H}_{\varepsilon, \beta}$ and  $\bar{\mathbf h}^*_{\varepsilon, \beta}$, for
$\varepsilon \le 2^\lambda$ and $\beta < \mu_2$ with the following properties:
\mn
\begin{enumerate}
\item[$\oplus_{\varepsilon}$]
\begin{enumerate}
\item[(a)] $\bar{\mathbb{K}}_\varepsilon =
\langle \mathbb{K}_{\varepsilon, \beta}:\beta < \mu_2\rangle$ is a sequence of
abelian groups,
%\sn
\item[(b)]   for each $\beta < \mu_2$ the sequence
$\langle \mathbb K_{\zeta,\beta}:\zeta \le 2^\lambda \rangle$ is
$\subseteq$-increasing and continuous,
%\sn
\item[(c)]  $ \mathbb{K}_{\varepsilon,\beta}$ has cardinality
$\le 2^\lambda$,
%\sn

\item[(d)]  $ \bar{\bf H}_{\varepsilon,\beta} = \langle
(\alpha_{\beta,\zeta},h_{\beta,\zeta}):\zeta \in \cU_\varepsilon
\rangle$ lists all the pairs $(\alpha, h)$ where
 $\alpha < \mu_1,~\alpha R \beta$ and $h \in
\Hom(\mathbb{G}^1_\alpha, \mathbb{K}_{\varepsilon,\beta})$,

\item[(e)] $ \bar{\bf h}^*_{\varepsilon,\beta} = \langle
h^*_{\beta,\zeta}:\zeta \in \cU_\varepsilon
\rangle$,
%\sn
\item[(f)]   if  $\varepsilon \le
\zeta \in \cU_\varepsilon$, then $h^*_{\beta, \zeta} \in \Hom
(\mathbb{G}^2_{\alpha_{\beta,\zeta}},\mathbb{K}_{\zeta +1,\beta})$ extends
 $h_{\beta,\zeta}$.
 The property is conveniently summarized by the subjoined  diagram:

 $$\xymatrix{
 	&&  \mathbb{G}^1_{\alpha_{\beta,\zeta}}  \ar[r]^{\subseteq}\ar[d]_{h_{\beta,\zeta}}&\mathbb{G}^2_{\alpha_{\beta,\zeta}}\ar[d]^{h^*_{\beta, \zeta}}\\
 	&&  \mathbb{K}_{\zeta ,\beta}\ar[r]^{\subseteq}&\mathbb{K}_{\zeta +1,\beta}
 	&&&}$$
 %In fact, $\mathbb{K}_{\zeta +1,\beta} \cong \mathbb{K}_{\zeta,\beta}
%\oplus \frac{\mathbb{G}^2_{\alpha_{\beta,\zeta}}}{\mathbb{G}^1_{\alpha_{\beta,\zeta}}}$
 %and
%Kernel$(h^*_{\beta, \zeta}) = \text{ Kernel}(h_{\beta, \zeta})$.

 %In fact, $\mathbb{K}_{\zeta +1,\beta} = \mathbb{K}_{\zeta,\beta}
%\oplus\bigoplus\limits_{h_{\beta,\zeta}}\Rang (h^*_{\beta, \zeta})$
% and
%Kernel$(h^*_{\zeta,\beta}) = \text{ Kernel}(h_{\zeta,\beta})$.
\end{enumerate}
%\item[${{}}$]  $(f) \quad$ if $\varepsilon = 0$ then
%$K_{\varepsilon,\beta}$ is $\oplus\{G^1_\alpha:\alpha < \mu_1\}$ or
% just $\oplus\{G^1_\alpha:\alpha < \mu_1,\neg(\alpha R \beta)\}$
\end{enumerate}
\mn
We proceed by a double induction on $\zeta$ and $\beta$ to construct such a sequence.   For $\zeta = 0$ and for all $\beta < \mu_2$ set
%$$K_{0,\beta}$ via $\oplus\{G^1_\alpha:\alpha < \mu_1\}$ or
$$\mathbb{K}_{0, \beta}:=\bigoplus\{\mathbb{G}^1_\alpha:\alpha < \mu_1,\neg(\alpha R \beta)\}.$$
By Fact \ref{ff}(ii), $\mathbb{K}_{0, \beta}$ is free, and so $\lambda$-free.

For $\zeta$ a limit ordinal and $\beta < \mu_2$, we set $$\mathbb{K}_{\zeta, \beta}=\bigcup\limits_{\varepsilon < \zeta}\mathbb{K}_{\varepsilon, \beta}.$$
Now suppose  that  the groups $K_{\varepsilon, \gamma}$ are defined for all $\varepsilon < \zeta+1$ and $\gamma < \mu_2$. We define $\mathbb{K}_{\zeta+1, \beta}$ for $\beta<\mu_2$ as follows.

 Let  $ \bar{\bf H}_{\varepsilon,\beta} = \langle
(\alpha_{\beta,\zeta},h_{\beta,\zeta}):\zeta \in \cU_\varepsilon
\rangle$ be as in clause (d).
% lists the pairs $(\alpha,h)$  with the properties $\alpha < \mu_1, h \in
%\Hom(G^1_\alpha,K_{\zeta,\beta})$ and $ \alpha R \beta$.
We look at the following diagram
$$
\begin{CD}
\mathbb{K}_{\zeta,\beta} \\
h_{\beta,\zeta}@AAA  \\
 \mathbb{G}^1_{\alpha_{\beta,\zeta}} @>\subseteq>> \mathbb{G}^2_{\alpha_{\beta,\zeta}}, \\
\end{CD}
$$
and  set $$\mathbb{K}_{\zeta+1,\beta}:=\mathbb{K}_{\zeta,\beta}\oplus_{\mathbb{G}^1_{\alpha_{\beta,\zeta}}} \mathbb{G}^2_{\alpha_{\beta,\zeta}}.$$ In particular,  there is a homomorphism $f$ induced from $h_{\beta,\zeta}$  which commutes the following diagram:

$$
\begin{CD}
0@>>>\mathbb{K}_{\zeta,\beta} @>g>>\mathbb{K}_{\zeta+1,\beta} @>>> \frac{\mathbb{G}^2_{\alpha_{\beta,\zeta}}}{\mathbb{G}^1_{\alpha_{\beta,\zeta}}} @>>> 0\\
   @.h_{\beta,\zeta}@AAAf@AAA = @AAA   \\
0@>>> \mathbb{G}^1_{\alpha_{\beta,\zeta}} @>>> \mathbb{G}^2_{\alpha_{\beta,\zeta}} @>>> \frac{\mathbb{G}^2_{\alpha_{\beta,\zeta}}}{\mathbb{G}^1_{\alpha_{\beta,\zeta}}} @>>>0\\
\end{CD}
$$
We set $h^*_{\beta, \zeta}:=f$. By a diagram chasing,  $h^*_{\beta, \zeta}$ extends
$h_{\beta,\zeta}$, and recall that $g:\mathbb{K}_{\zeta,\beta} \hookrightarrow\mathbb{K}_{\zeta+1,\beta}$ is an embedding. This completes the inductive construction and hence proves the existence of the sequences claimed to exist above, i.e., the construction of $\oplus_{\varepsilon}$ is now complete. Suppose now that every  group $\mathbb{G}_\alpha$ is $\lambda$-free. By Fact \ref{ff}(iii)
it follows that $\mathbb{K}_{\zeta,\beta}$ is $\lambda$-free as well.
%From these, there is no problem to carry the induction.

%Now for $\alpha < \mu_1$ recall $\mathbb{G}_\alpha = \frac{\mathbb{G}^2_\alpha}{\mathbb{G}^1_\alpha}$ and
For $\beta < \mu_2$ let
$$\mathbb{K}_\beta := \mathbb{K}_{2^\lambda,\beta}.$$
In view of Lemma \ref{aleffree}, the above short exact sequence and the hypotheses, each $\mathbb{K}_\beta$ is an $\aleph_1$-free abelian group of size $2^\lambda.$ Recall that  $\lambda$-freeness of
$\mathbb{K}_\alpha$ follows by $\lambda$-freeness of
$\mathbb{G}_\alpha$, as desired by $\boxtimes^2_{\lambda,\mu_1}(\gamma)$.

Let us check the item $\boxtimes^2_{\lambda,\mu_1}(\beta)$.
 First, suppose that
$\alpha R \beta$ and let $h \in
\Hom (\mathbb{G}^1_\alpha, \mathbb{K}_\beta)$. Since $|\mathbb{G}^1_\alpha| \le \lambda <
\cf(\mu_2)$, for some $\varepsilon < 2^\lambda$ we have
$\Rang(h)\subseteq K_{\varepsilon,\beta}$, and hence we can
assume that
 $h
\in \Hom (\mathbb{G}^1_\alpha, \mathbb{K}_{\varepsilon,\beta})$. By the definition of $\bar{\bf H}_{\varepsilon,\beta},$  for some
$\zeta \in  \cU_\varepsilon$ we have $(\alpha, h) =
(\alpha_{\beta,\zeta},h_{\beta,\zeta})$. Then by clause (f) of the construction, $h^*_{\beta,\zeta} \in
\Hom(\mathbb{G}^2_\alpha,\mathbb{K}_{\zeta +1,\beta})$ extends
$h_{\beta,\zeta}$. There is a natural embedding map from $\Hom(\mathbb{G}^2_\alpha,\mathbb{G}_{\zeta +1,\beta})$
 into $\Hom (\mathbb{G}^2_\alpha, \mathbb{K}_\beta)$, so without loss of generality, we may and do assume that $$h^*_{\beta,\zeta} \in
\Hom (\mathbb{G}^2_\alpha, \mathbb{K}_\beta).$$ Now, we look at the exact sequence
$$(+):=0\longrightarrow \mathbb{G}^1_\alpha \longrightarrow \mathbb{G}^2_\alpha\longrightarrow \mathbb{G}_\alpha\longrightarrow 0.$$
Applying $\Hom(-,\mathbb{K}_\beta)$ to $(+)$, it induces the following long exact sequence

 $$\Hom(\mathbb{G}^2_\alpha, \mathbb{K}_\beta)\stackrel{{\bf f}}\longrightarrow \Hom( \mathbb{G}^1_\alpha, \mathbb{K}_\beta)\longrightarrow\Ext(\mathbb{G}_\alpha,\mathbb{K}_\beta)\longrightarrow\Ext(\mathbb{G}^2_\alpha, \mathbb{K}_\beta) =0,$$
where the last vanishing $\Ext(\mathbb{G}^2_\alpha, \mathbb{K}_\beta) =0$ holds as  $\mathbb{G}^2_\alpha $ is free. From this, $$\Ext(\mathbb{G}_\alpha,\mathbb{K}_\beta)=\frac{\Hom( \mathbb{G}^1_\alpha,\mathbb{K}_\beta)}{\Rang(f)}.$$
Thus, $\Ext(\mathbb{G}_\alpha, \mathbb{K}_\beta)=0$ if and only if ${\bf f}$ is surjective.
But as we observed above, ${\bf f}$ is onto, and hence  we conclude that $\Ext(\mathbb{G}_\alpha, \mathbb{K}_\beta)=0$.

Let $(\alpha,\beta) \in (\mu_1 \times \mu_2)
\setminus R$. We need to show $\Ext(\mathbb{G}_\alpha, \mathbb{K}_\beta) \ne 0$.
%Now $\bar \mathbb{K}_\beta = \langle
%K_{\varepsilon,\beta}:\varepsilon \le 2^\lambda\rangle$ is
%$\subseteq$-increasing continuous constructed in $\langle
%\oplus_{2,\varepsilon}:\varepsilon \le 2^\lambda\rangle$.
Suppose on the contrary that $\Ext(\mathbb{G}_\alpha, \mathbb{K}_\beta) = 0$, and  search for a contradiction.
Since $\mathbb{G}^1_\alpha$ is a direct summand of $\mathbb{K}_{0,\beta}$,  we have $ \mathbb{G}^1_\alpha\oplus X= \mathbb{K}_{0,\beta}$. Let   $\rho:\mathbb{G}^1_\alpha\to\mathbb{G}^1_\alpha\oplus X $ be the natural map, and look at$$
\begin{CD}
g^1_\alpha:=\mathbb{G}^1_\alpha@>\rho>>\mathbb{G}^1_\alpha\oplus X@>=>> \mathbb{K}_{0,\beta}  @>\subseteq>>  \mathbb{K}_\beta
\end{CD}
$$

 So  by our assumption, there is
$g^2_\alpha: \mathbb{G}^2_\alpha \rightarrow \mathbb{K}_\beta$ extending $g^1_\alpha$:$$\xymatrix{
	&&  \mathbb{G}^1_\alpha \ar[r]^{\subseteq}\ar[d]_{g^1_\alpha}&\mathbb{G}^2_\alpha\ar[dl]^{\exists g^2_\alpha}\\
	 &&  \mathbb{K}_\beta
	&&&}$$

We now show  that there exists $u \subseteq 2^\lambda$  equipped with:
\begin{enumerate}
	\item[(a)]    $|u| =
	\lambda$,
	%\sn
	\item[(b)]   $ 0 ,\alpha,\beta \in u$,
	
	%\sn
	\item[(c)]  	$\varepsilon \in u \Rightarrow \Rang(h_{\beta, \varepsilon}) \subseteq
	\sum\limits_{\zeta \in u \cap \varepsilon} \Rang(h^*_{\beta, \zeta})+
	\sum\limits_{\gamma \in u} \mathbb{G}^1_\gamma$,
	\item[(d)]   $\Rang(g^2_\alpha)
	\subseteq	\sum\limits_{\varepsilon \in u}\Rang(h^*_{\beta, \varepsilon}) +
	\sum\limits_{\gamma \in u} \mathbb{G}^1_\gamma$.
	\end{enumerate}
Indeed, by induction on $i <\lambda$ we define an increasing and continuous sequence $\langle
u_i:i<\lambda\rangle$ of subsets of $2^{\lambda}$ such that  for each
$i <\lambda$, the following properties are valid:

\begin{enumerate}
\item[$\bullet$]   $|u_i| \leq
	\lambda$,
	
\item[$\bullet$]    $ 0 ,\alpha,\beta \in u_0$,

\item[$\bullet$]  If $i$ is limit ordinal, then $u_i=\bigcup_{j<i} u_j $,

\item[$\bullet$]  $\Rang(g^2_\alpha)
	\subseteq \sum\limits_{\varepsilon \in u_0}\Rang(h^*_{\beta, \varepsilon}) +
	\sum\limits_{\gamma \in u_0} \mathbb{G}^1_\gamma$,
	
\item[$\bullet$]      If $\varepsilon \in u _i$ then  $$ \Rang(h_{\beta, \varepsilon}) \subseteq
	\sum\limits_{\zeta\in u_{i+1}\cap \varepsilon}  \Rang(h^*_{\beta, \zeta})+
	\sum\limits_{\gamma \in u_{i+1}} \mathbb{G}^1_\gamma.$$
\end{enumerate}
Set $u:=\bigcup\limits_{i<\lambda}u_i$. It is easily seen that $u$ satisfies  the required properties.
%By $\otp(u)$ we mean the order type of $u$.
Let $\varepsilon(*) = \otp(u)$, where $\otp(u)$ denotes the order type of the set $u$ and let $\langle
\gamma_\varepsilon:\varepsilon < \varepsilon(*)\rangle$ be the increasing enumeration of $u$.

Set
$$\cG:=\langle \mathbb{G}_\gamma: \gamma\in u\setminus\alpha\rangle.$$
We now define a
construction  $\bar{\mathbb{L}}:= \langle \mathbb{L}_\varepsilon:\varepsilon \le
\varepsilon(*)\rangle$ by $\cG$  over $\mathbb{L}_0=\mathbb{G}_\alpha^1$. To achieve this, we set $$ \mathbb{L}_{\epsilon+1}:= 	 \sum\limits_{\zeta<\epsilon} \Rang(h^*_{\beta, \zeta})+
\sum\limits_{\gamma \in u} \mathbb{G}^1_\gamma.$$Let $A:=\cok( h^*_{\beta, \zeta})$. Let put
this into the previous diagram and obtain the following: $$
\begin{CD} @. 0 @.  0 \\@.@AAA   @AAA    \\
  @. A @>=>> A \\@.\pi@AAA\pi   @AAA    \\
0@>>>\mathbb{K}_{\zeta,\beta} @>>>\mathbb{K}_{\zeta+1,\beta} @>>> \mathbb{G}_{\alpha_{\beta,\epsilon}} @>>> 0\\
@.h_{\beta,\zeta}@AAA  h^*_{\beta, \zeta}@AAA = @AAA   \\
0@>>> \mathbb{G}^1_{\alpha_{\beta,\zeta}} @>>> \mathbb{G}^2_{\alpha_{\beta,\zeta}} @>>> \mathbb{G}_{\alpha_{\beta,\epsilon}} @>>>0\\
\end{CD}
$$This yields that:$$
\begin{CD}
0@>>>\Rang (h^*_{\beta, \zeta} )  @>>>\mathbb{K}_{\zeta+1,\beta} @>>> A @>>> 0\\
@.f_1@AAA  f_2@AAA = @AAA   \\
0@>>>\Rang (h_{\beta, \zeta} )@>>> \mathbb{K}_{\zeta,\beta} @>>> A @>>>0,\\
\end{CD}
$$where $f_i$ are the natural inclusions. Now, we use  the $\Ker-\cok$ exact sequence:

$$0\to \Ker(f_1)\to \Ker(f_2)\to \Ker(=)\to \cok(f_1)\to \cok(f_2)\to \cok(=)\to 0$$
Recall that $\Ker(=)= \cok(=)=0$, $ \cok(f_1)=\frac{\Rang (h^*_{\beta, \zeta} )}{\Rang (h_{\beta, \zeta} )}$ and $ \cok(f_2)= \mathbb{G}_{\alpha_{\beta,\epsilon}}$. So, $$\Rang (h^*_{\beta, \zeta} )/ \Rang (h_{\beta, \zeta} ) \cong  \mathbb{G}_{\alpha_{\beta,\epsilon}}\quad(+)$$
For each $\epsilon <\epsilon(\star)$ we have $$\frac{\mathbb{L}_{\epsilon+1}}{\mathbb{L}_{\epsilon}}\cong\Rang (h^*_{\beta, \zeta} )/ \Rang (h_{\beta, \zeta} )  \cong  \mathbb{G}_{\alpha_{\beta,\epsilon}}\in \cG.$$ In view of Definition \ref{a1d},
$\bar{\mathbb{L}}$ is indeed a construction.
Recall from $(d)$  that $\Rang(g^2_\alpha)\subset \mathbb{L}_{\varepsilon(*)}$. Hence  $ g^2_\alpha:\mathbb{G}^2_\alpha\to \mathbb{L}_{\varepsilon(*)}$ extends the identity function over $\mathbb{G}^1_\alpha$. This is in contradiction with $\boxtimes^1_{\lambda,\mu_1}(c)$.
\end{proof}
\section{From  diamond to Ext-graph of quite free groups }
In this section we are going to prove Theorem (B) from the introduction.

\begin{notation}
 	Let $\mathbb{G}$ be a reduced torsion free abelian group, e.g., a free group.
	The notation $ {\mathbb{G}}$ stands for the  $\mathbb{Z}$-adic completion of $\mathbb{G}$.\end{notation}

\begin{remark} i) $\mathbb{Z}$-adic topology of a  free abelian group is Hausdorff.
	
	 ii) For example, each element of  $\widehat{\mathbb{\mathbb{Z}}}$ can be
	represented as $\Sigma_n k_n$ where $n! |k_n$.
\end{remark}
%We   need the following elementary lemma in the proof of Lemma \ref{nonext}:
\iffalse
\begin{lemma}\label{dis}
	Let $\mathbb{G}$ be a  free abelian group with a  base $\{e_n:n\in \mathbb{N}\}$. Let $c_0:=0$. For each $n>0$ there is a choose $c_n\in\{0,1,-1\}$ such that:
	\begin{enumerate}
		\item[$(a)$]  not all of the $c_n$'s are zero,
		\item[$(b)$]  $\sum\limits_{n \geq 1} n! c_n e_n=0 \in \widehat{\mathbb{G}}$.
	\end{enumerate}
\end{lemma}
\begin{proof}
	For notational simplicity,	we denote any element of  $\mathbb{G}$ by $x^{\oplus}$ where $x^{\oplus}=(x_i)_{i<\omega}$ and all but finitely many of $x_i$'s are zero.
	By definition,  $$\widehat{\mathbb{G}}=\varprojlim_n\mathbb{G}/ n\mathbb{G}\subseteq \prod \mathbb{G}/ n\mathbb{G}.$$
	Thus, any element of  $\widehat{\mathbb{G}}$ is represented by a sequence $(x^{\oplus}_n+n\mathbb{G})$. Let $a_n^{\oplus}\in\oplus\{0,1,-1\}$  be so that
	  $ \sum_{n<m}   n!a_n^{\oplus} \in  m!\mathbb{G}=\bigoplus_{n<\omega} m!\mathbb{Z} $ for all $m$. In other word, we have the following different
	representation of   $0\in\widehat{\mathbb{G}}$: $$0=(0+m!\mathbb{G})_{m\in\mathbb{N}}=(\sum_{n<m} n!a_n^{\oplus}+m!\mathbb{G})_{m\in\mathbb{N}}.$$In particular, there are $c_n$ such that  $\sum\limits_{n \geq 1} n! c_n e_n=0$.
\end{proof}
\fi
\begin{discussion}
Recall that for a stationary set $S \subseteq \lambda,$ Jensen's diamond $\diamondsuit_\lambda(S)$ asserts the existence
of a sequence $\langle  S_\alpha: \alpha \in S   \rangle$
such that for every $X \subseteq \lambda$ the set $\{\alpha \in S: X \cap \alpha=S_\alpha    \}$
is stationary.
\end{discussion}

For simplicity of the reader we cite the following result:

\begin{fact}\label{dec}
 If $\diamondsuit_\lambda(S)$  holds, then there is a decomposition $S =\bigcup_{\beta <\lambda }S_{\beta}$
such that $\diamondsuit_\lambda(S_{\beta})$ holds for all $\beta <\lambda$.
\end{fact}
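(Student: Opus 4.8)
The plan is to reduce the statement to a two-dimensional (simultaneous) form of the diamond and then to use one of the two guessed sets purely as a \emph{coloring device}. Throughout I write the given $\diamondsuit_\lambda(S)$-sequence as $\langle D_\alpha:\alpha\in S\rangle$ with $D_\alpha\subseteq\alpha$ (renaming to avoid a clash with the target pieces $S_\beta$), and I assume, as is standard for this principle, that $\lambda$ is regular uncountable so that clubs and stationary sets behave and $S\cap C$ is stationary for every club $C$.

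\textbf{Step 1 (pair guessing).} First I would upgrade $\diamondsuit_\lambda(S)$ to the assertion that there is a sequence $\langle (A_\alpha,B_\alpha):\alpha\in S\rangle$, with $A_\alpha,B_\alpha\subseteq\alpha$, such that for \emph{every} pair $(X,Y)$ of subsets of $\lambda$ the set $\{\alpha\in S: A_\alpha=X\cap\alpha \text{ and } B_\alpha=Y\cap\alpha\}$ is stationary. This is the routine coding step: fix a bijection $e:\lambda\to\lambda\times\lambda$, note that $C=\{\alpha<\lambda: e\rest\alpha \text{ is a bijection onto } \alpha\times\alpha\}$ is a club, and for $\alpha\in S\cap C$ read off $A_\alpha=\{\gamma<\alpha: e^{-1}(0,\gamma)\in D_\alpha\}$ and $B_\alpha=\{\gamma<\alpha: e^{-1}(1,\gamma)\in D_\alpha\}$ from the given guess $D_\alpha$. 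Given a target pair $(X,Y)$, feeding the single set $Z=\{e^{-1}(0,\gamma):\gamma\in X\}\cup\{e^{-1}(1,\gamma):\gamma\in Y\}$ into the one-dimensional diamond and intersecting its stationary guessing set with the club $C$ yields stationarily many $\alpha$ decoding to $A_\alpha=X\cap\alpha$ and $B_\alpha=Y\cap\alpha$.

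\textbf{Step 2 (coloring and verification).} Next I would define a coloring $c:S\to\lambda$ by $c(\alpha)=\min(B_\alpha)$, with the convention $c(\alpha)=0$ when $B_\alpha=\emptyset$, and set $S_\beta=\{\alpha\in S: c(\alpha)=\beta\}$. These sets are pairwise disjoint with union $S$, so they constitute the desired decomposition; it remains to verify $\diamondsuit_\lambda(S_\beta)$, for which I claim the restricted sequence $\langle A_\alpha:\alpha\in S_\beta\rangle$ works. Fix $\beta<\lambda$ and $X\subseteq\lambda$ and apply Step 1 to the pair $(X,[\beta,\lambda))$: there are stationarily many $\alpha\in S$ with $A_\alpha=X\cap\alpha$ and $B_\alpha=[\beta,\lambda)\cap\alpha$. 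For every such $\alpha>\beta$ one has $B_\alpha=[\beta,\alpha)$, hence $\min(B_\alpha)=\beta$, i.e.\ $\alpha\in S_\beta$; intersecting with the club $\{\alpha:\alpha>\beta\}$ preserves stationarity. Thus $\{\alpha\in S_\beta: A_\alpha=X\cap\alpha\}$ is stationary, which is exactly $\diamondsuit_\lambda(S_\beta)$. (Taking $X=\emptyset$ also shows each $S_\beta$ is stationary, so the diamonds are non-vacuous.)

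\textbf{Main obstacle.} The delicate point is not the coding of Step 1 but the design of the coloring: one needs, for each target color $\beta$ and each target set $X$, stationarily many $\alpha$ that are \emph{simultaneously} colored $\beta$ and guess $X$ correctly, even though the value $c(\alpha)$ can never be recovered from any proper initial segment of the data at $\alpha$. The device that resolves this is to let the color be read off from the auxiliary guess $B_\alpha$ in a way that a single global set forces it: here $c(\alpha)=\min(B_\alpha)$ paired with the tail set $Y=[\beta,\lambda)$, every initial segment $Y\cap\alpha$ of which has minimum exactly $\beta$. This decouples ``which piece'' from ``what is guessed'' while keeping both under the control of a single application of the two-dimensional diamond from Step 1.
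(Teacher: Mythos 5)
Your proof is correct. There is essentially nothing in the paper to compare it against line by line: the paper's entire proof of this fact is the citation ``See \cite[Theorem 9.1.17]{GT}'', so you have supplied a self-contained argument where the authors supply only a reference. Your route is the standard one underlying that cited theorem: code pairs through a bijection $e:\lambda\to\lambda\times\lambda$ and the club $C$ of ordinals closed under $e$ and $e^{-1}$, thereby upgrading $\diamondsuit_\lambda(S)$ to simultaneous guessing of pairs $(X,Y)$, and then use the second coordinate purely as a coloring. Your particular coloring $c(\alpha)=\min(B_\alpha)$, verified against the tail sets $Y=[\beta,\lambda)$, is a neat variant of the more common textbook device (guess singletons $Y=\{\beta\}$, let $S_\beta=\{\alpha\in S: B_\alpha=\{\beta\}\}$, and absorb the leftover points of $S$ into one fixed piece, using that $\diamondsuit$ passes from a set to any superset); your version has the small advantage that the fibers of $c$ partition $S$ outright, with nothing left over to absorb. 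One half-sentence patch is needed for completeness: your decoding defines $A_\alpha,B_\alpha$ only for $\alpha\in S\cap C$, while the coloring $c$ must be defined on all of $S$; simply set $A_\alpha=B_\alpha=\emptyset$ for $\alpha\in S\setminus C$ (such points land harmlessly in $S_0$ and never enter the verification, since the stationary set of correct guessers is intersected with $C$ anyway).
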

\begin{proof}
 See \cite[Theorem 9.1.17]{GT}.
\end{proof}

The following lemma plays a key role in our construction.

\begin{lemma}
	\label{nonext}
 Let $\iota\in \{1, 2\}$. Suppose $\mathbb G^\iota_n$  are free abelian groups of the same size such that for all $n<\omega$  the following conditions are satisfied:
	\begin{enumerate}
		\item $\mathbb{G}^1_n \subseteq \mathbb{G}^2_n,$
		\item $\mathbb{G}^\iota_n \subseteq \mathbb{G}^\iota_{n+1}$,
		\item  $\mathbb{G}^\iota_{n+1}/ \mathbb{G}^\iota_n$ is free,
		
		\item $\mathbb{G}^2_{n+1}/ (\mathbb{G}^1_{n+1}+\mathbb{G}^2_n)$ is free,
		
		\item $\mathbb{G}^2_{n+1}=\left(\mathbb{G}^1_{n+1}\oplus_{\mathbb G^1_n} \mathbb G^2_n\right)\oplus \bigoplus\limits_{m<\omega} y_m\mathbb Z,$
		
		\item $\bar{\mathbb{L}}=\langle \mathbb{L}_\varepsilon: \varepsilon \leq \varepsilon(*)       \rangle$	is a construction,
		
		\item $\mathbb{L}_{\varepsilon}$ and $\mathbb{L}_{\varepsilon+1}/ \mathbb{L}_\varepsilon$ are $\aleph_1$-free.
	\end{enumerate}
	Let $\mathbb G^\iota=\bigcup\limits_{n<\omega}\mathbb{G}^\iota_n,$   so that $\mathbb G^1 \subseteq \mathbb G^2$ are free abelian groups and let
	$\bold f \in \Hom(\mathbb G^2, \mathbb L_{\varepsilon(*)})$.
	Then there are free abelian groups $\dot{\mathbb G}^1  \subseteq \dot{\mathbb{G}}^2$ such that:
	\begin{enumerate}
		\item[(a)] $\mathbb{G}^1 \subseteq \dot{\mathbb G}^1$,
		
		\item[(b)] $\mathbb{G}^2 \subseteq \dot{\mathbb G}^2$,
		
		\item[(c)] $\mathbb{G}^1=\mathbb{G}^2 \cap \dot{\mathbb G}^1$,
		
		\item[(d)] $\dot{\mathbb G}^\iota / {\mathbb G}^\iota$ is free,
		
		\item[(e)] $\dot{\mathbb G}^2/ (\dot{\mathbb G}^1+\mathbb{G}^2)$ is torsion free,
		
		\item[(f)] for all $n<\omega$, $\dot{\mathbb G}^2/ (\dot{\mathbb G}^1+\mathbb{G}^2_n)$ is free,

		\item[(g)]  there are no $\bold{\dot{f}}$ and $ \langle  \dot{\mathbb{L}}_\varepsilon: \varepsilon \leq \varepsilon(*) \rangle$ equipped with the following properties:
		%\rangle \in \Hom(\bar{\mathbb G}^2, \bar{\mathbb L})$
		%	such that $\bar{\bold f}$ extends $\bold f \cup \id_{\bar{\mathbb G}^1}$.
		\begin{enumerate}
			\item[$(g1)$] $\langle  \dot{\mathbb{L}}_\varepsilon: \varepsilon \leq \varepsilon(*) \rangle$ is a nice construction over $\dot{\mathbb{G}}^1,$
			
			\item[$(g2)$]  $\bold{\dot{f}} \in \Hom(\dot{\mathbb{G}^2}, \dot{\mathbb{L}}_{\varepsilon(*)})$,
			
			\item[$(g3)$] $\dot{\bold f}$ extends $\bold f \cup \id_{\dot{\mathbb G}^1}$.
		 This property is conveniently summarized by the subjoined  diagram:

		\[
		\xymatrix@=3pc{
			& &  \dot{\mathbb{L}}_{\varepsilon(*)} \\
			\dot{\mathbb G}^1 \ar[r]_{\subseteq}\ar@/^/[rru]^{\subseteq} & \dot{\mathbb G}^2 \ar[ru]_{\nexists\dot{\bold f}}   & \\
			\mathbb{G}^1 \ar [u]^\subseteq \ar[r]_\subseteq & \mathbb{G}^2\ar[u] ^\subseteq \ar@/_/[ruu]_{{\bold f}}  &
		}
		\]

			\item[$(g4)$] $\mathbb{L}_\varepsilon \subseteq \dot{\mathbb{L}}_\varepsilon$ for $\varepsilon \leq \varepsilon(*),$
			
			\item[$(g5)$] $\mathbb{L}_\varepsilon = \mathbb{L}_{\varepsilon+1} \cap \dot{\mathbb{L}}_\varepsilon,$
			
			\item[$(g6)$] $\dot{\mathbb L}_{\varepsilon+1}/ (\mathbb{L}_{\varepsilon+1}+\dot{\mathbb L}_\varepsilon)$ is free.
		\end{enumerate}
	\end{enumerate}
\end{lemma}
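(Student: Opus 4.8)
\emph{Plan of proof.} The idea is to amalgamate onto the pair $\mathbb{G}^1 \subseteq \mathbb{G}^2$ a single \emph{branch}, together with its divisibility witnesses, so that in the relative quotient a new element becomes infinitely divisible only ``in the limit''. Concretely, using the free generators $y_m$ furnished by clause~(5) at each level, I would choose witnesses $w_n \in \dot{\mathbb{G}}^2$ and elements $b_n \in \mathbb{G}^2_n$, $a_n \in \dot{\mathbb{G}}^1$ and build $\dot{\mathbb{G}}^2$ as the group generated by $\mathbb{G}^2$, by $\dot{\mathbb{G}}^1$, and by the $w_n$, subject to the coherence relations $w - n!\,w_n = a_n + b_n$ for a distinguished element $w = w_0$ and all $n<\omega$ (with $w_n - (n+1)w_{n+1}$ absorbing $\bigl((a_{n+1}-a_n)+(b_{n+1}-b_n)\bigr)/n!$). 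I would simultaneously put a parallel branch inside $\dot{\mathbb{G}}^1$, realized by the $a_n$, run independently of $\mathbb{G}^2$ over $\mathbb{G}^1$. The design goal is that $w$ be divisible by $n!$ modulo $\dot{\mathbb{G}}^1 + \mathbb{G}^2_n$ for every $n$, while $w \notin \dot{\mathbb{G}}^1 + \mathbb{G}^2$; this is the obstruction to be exploited in~(g).

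The freeness and purity clauses (a)--(f) are then checked by bookkeeping. Clauses (a),(b) hold by construction, and (c) holds because the $\dot{\mathbb{G}}^1$-branch is kept independent of $\mathbb{G}^2$ over $\mathbb{G}^1$, so $\dot{\mathbb{G}}^1 \cap \mathbb{G}^2 = \mathbb{G}^1$. For (d) I would verify that the two parallel branches cancel in the quotients $\dot{\mathbb{G}}^\iota/\mathbb{G}^\iota$, leaving free groups, invoking Lemma~\ref{ff}. Clause~(f) is the heart of the finite-level analysis: modulo $\dot{\mathbb{G}}^1 + \mathbb{G}^2_n$ the tail corrections $b_k \in \mathbb{G}^2_k$ with $k \ge n$ survive as genuine free generators and \emph{break} the divisibility chain, so each $\dot{\mathbb{G}}^2/(\dot{\mathbb{G}}^1 + \mathbb{G}^2_n)$ is free; only in the limit, once all the $b_k$ are absorbed, does the chain close up, and what remains of it in $\dot{\mathbb{G}}^2/(\dot{\mathbb{G}}^1 + \mathbb{G}^2)$ is a nonzero, torsion-free, infinitely divisible element, giving~(e).

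For the main clause (g) I would argue by contradiction. Suppose $\dot{\mathbf{f}}$ and $\langle \dot{\mathbb{L}}_\varepsilon : \varepsilon \le \varepsilon(*)\rangle$ satisfy (g1)--(g6). Applying $\dot{\mathbf{f}}$ to the coherence relations and using $\dot{\mathbf{f}}\rest\dot{\mathbb{G}}^1 = \id$ together with $\dot{\mathbf{f}}\rest\mathbb{G}^2 = \mathbf{f}$, I get $\dot{\mathbf{f}}(w) - (a_n + \mathbf{f}(b_n)) \in n!\,\dot{\mathbb{L}}_{\varepsilon(*)}$ for every $n$; that is, the image of the branch is infinitely divisible modulo $\dot{\mathbb{G}}^1 + \mathbb{L}_{\varepsilon(*)}$ inside $\dot{\mathbb{L}}_{\varepsilon(*)}$. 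Now the niceness of the construction together with (g4)--(g6) --- telescoping the free quotients $\dot{\mathbb{L}}_{\varepsilon+1}/(\mathbb{L}_{\varepsilon+1} + \dot{\mathbb{L}}_\varepsilon)$ along the construction and using (g5) for purity, in the spirit of Lemma~\ref{aleffree} --- forces $\dot{\mathbb{L}}_{\varepsilon(*)}/(\dot{\mathbb{G}}^1 + \mathbb{L}_{\varepsilon(*)})$ to be $\aleph_1$-free, hence reduced and $\mathbb{Z}$-adically Hausdorff on each countable subgroup. A nonzero infinitely divisible element is therefore impossible there, so the induced map carries the branch to $0$, i.e. $\dot{\mathbf{f}}(w) \in \dot{\mathbb{G}}^1 + \mathbb{L}_{\varepsilon(*)}$. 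Reflecting this back through $\dot{\mathbf{f}}$ --- comparing the divisibility chain in $\dot{\mathbb{G}}^2$ with its $\dot{\mathbf{f}}$-image --- yields $w \in \dot{\mathbb{G}}^1 + \mathbb{G}^2$, contradicting the construction of $w$ recorded in~(e). Hence no such $\dot{\mathbf{f}}$ exists, which is exactly~(g).

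The two places where care is needed, and which I expect to be the main obstacle, are: first, the simultaneous realization of (d), (f) and (e) --- the $n!$-coefficients and the parallel $\dot{\mathbb{G}}^1$-branch must be arranged so that the divisibility is invisible at every finite level yet genuinely present and torsion-free in the limit, and so that both $\dot{\mathbb{G}}^\iota$ and the quotients $\dot{\mathbb{G}}^\iota/\mathbb{G}^\iota$ stay free; and second, the final reflect-back in (g), where one must extract from $\dot{\mathbf{f}}(w) \in \dot{\mathbb{G}}^1 + \mathbb{L}_{\varepsilon(*)}$ and the reducedness of the relevant quotient of $\dot{\mathbb{L}}_{\varepsilon(*)}$ the conclusion that $w$ itself is trivial modulo $\dot{\mathbb{G}}^1 + \mathbb{G}^2$. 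Everything else is routine permanence of freeness via Lemma~\ref{ff} and diagram chasing.
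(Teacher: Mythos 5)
Your construction of $\dot{\mathbb G}^1,\dot{\mathbb G}^2$ is essentially the right one (one ``branch'' $w=\sum_n n!(y_n+a_nz_n)$ with tails $w_n$, over $\dot{\mathbb G}^1=\mathbb G^1\oplus\bigoplus_n z_n\mathbb Z$), and your argument for (g) is sound up to and including the conclusion that every tail witness $\dot{\mathbf f}(w_n)$ lies in $\dot{\mathbb G}^1+\mathbb L_{\varepsilon(*)}$: the quotient $\dot{\mathbb L}_{\varepsilon(*)}/(\dot{\mathbb G}^1+\mathbb L_{\varepsilon(*)})$ is indeed $\aleph_1$-free, hence reduced and torsion-free, by telescoping (g4)--(g6) (one checks from (g5) that $\mathbb L_{\varepsilon(*)}\cap\dot{\mathbb L}_{\varepsilon+1}=\mathbb L_{\varepsilon+1}$, so the successive quotients of $(\dot{\mathbb L}_\varepsilon+\mathbb L_{\varepsilon(*)})/(\dot{\mathbb G}^1+\mathbb L_{\varepsilon(*)})$ are exactly the free groups of (g6)). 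The genuine gap is your final step: from $\dot{\mathbf f}(w)\in\dot{\mathbb G}^1+\mathbb L_{\varepsilon(*)}$ you infer $w\in\dot{\mathbb G}^1+\mathbb G^2$. A homomorphism does not reflect membership in a subgroup, and this implication is false. Worse, no argument which, like this one, never uses the $\dot{\mathbb G}^1$-components $a_n$ of the branch can possibly be correct, because for the degenerate choice $a_n\equiv 0$ (so $w=\sum_n n!y_n$) the conclusion (g) itself can fail: in an instance of the hypotheses where $\mathbf f$ restricts to the identity on $\mathbb G^1$ and kills every $y_m$ (e.g.\ $\mathbb G^2=\mathbb G^1\oplus\bigoplus_{k,m}y_m^k\mathbb Z$ with $\mathbf f$ the projection onto $\mathbb G^1\subseteq\mathbb L_{\varepsilon(*)}$), the enemy takes $\dot{\mathbb L}_\varepsilon:=\dot{\mathbb G}^1\oplus_{\mathbb G^1}\mathbb L_\varepsilon=\mathbb L_\varepsilon\oplus\bigoplus_n z_n\mathbb Z$, which satisfies (g1) and (g4)--(g6), and extends $\mathbf f\cup\id_{\dot{\mathbb G}^1}$ by $\dot{\mathbf f}(w_n)=0$. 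Here $\dot{\mathbf f}(w)=0\in\dot{\mathbb G}^1+\mathbb L_{\varepsilon(*)}$ while $w\notin\dot{\mathbb G}^1+\mathbb G^2$, so your step would ``prove'' a false statement; the contradiction you need cannot be a statement about $w$ at all.

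The repair keeps your framework but moves the contradiction inside $\dot{\mathbb G}^1$, and it forces the branch to have $a_n\neq 0$ for infinitely many $n$ (say $a_n=1$ for all $n$). From (g5) one gets $\dot{\mathbb G}^1\cap\mathbb L_{\varepsilon(*)}=\mathbb G^1$ by induction on $\varepsilon$. Writing $\dot{\mathbf f}(w_n)=g_n+l_n$ with $g_n\in\dot{\mathbb G}^1$, $l_n\in\mathbb L_{\varepsilon(*)}$, and substituting into the relations $\dot{\mathbf f}(w_n)=\mathbf f(y_n)+z_n+(n+1)\dot{\mathbf f}(w_{n+1})$, the two components separate modulo $\mathbb G^1$, giving $g_n\equiv z_n+(n+1)g_{n+1}\pmod{\mathbb G^1}$; projecting $\dot{\mathbb G}^1\to\bigoplus_n z_n\mathbb Z$ produces finite-support elements $u_n$ with $u_n=z_n+(n+1)u_{n+1}$, whence the $z_k$-coefficient of $u_0$ equals $k!$ for \emph{every} $k$ --- impossible. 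Note also that, even so repaired, your route differs genuinely from the paper's: the paper does not fix a single branch, but builds the continuum-many candidates $\mathbb G^{2,\vec a}$, $\vec a\in{}^{\omega}2$, proves a rigidity statement (its Claim (A), which uses only torsion-freeness of $\dot{\mathbb L}_{\varepsilon(*)}$) saying that counterexamples $\dot{\mathbf f}_{\vec a}$ determine $\vec a$, and concludes by a pigeonhole that some $\vec a$ admits no counterexample; reducedness of the relative quotient and clauses (g4)--(g6) play no role in that argument. Your single-candidate approach is more explicit and, once the final step is replaced as above, would give a correct and arguably more direct proof; as written, however, the decisive step is missing.
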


\begin{proof}
 	Set $\dot{\mathbb G}^1: = \mathbb G^1 \oplus \bigoplus\limits_{n<\omega} z_n \mathbb{Z}$, where $\bigoplus\limits_{n<\omega} z_n \mathbb{Z}$ is a free abelian group with a base  given by $\{z_n:n<\omega\} $.
	To define $\dot{\mathbb G}^2,$ let us first set
	\[
	\mathbb G^{2, \ast}:= \dot{\mathbb G}^1 \oplus_{\mathbb G^1} \mathbb G^2.
	\]
	For any infinite sequence $\vec{a}=\langle a_n: n<\omega \rangle\in \prod _\omega2=  {}^{\omega}2$, we look at the following
	\[
	\mathbb G^{2, \vec{a}} := \langle \mathbb G^{2, \ast} \cup \{ \sum\limits_{n<\omega} n!(y_n+ a_n z_n)                   \} \rangle,
	\]i.e., the subgroup of $\widehat{\mathbb G^{2, \ast}}$ generated by $\mathbb G^{2, \ast}$ and the distinguished element $$ \sum\limits_{n<\omega} n!(y_n+ a_n z_n) \in\widehat{\mathbb G^{2, \ast}} .$$
	It is routine to see that the properties $(a)-(f)$ are satisfied with $\mathbb G^{2, \vec{a}}$. We are going to show that $\dot{\mathbb G}^2=\mathbb G^{2, \vec{a}}$ satisfies in the remaining  property $(g)$  for a suitable choice of $\vec{a}$. Suppose on the way of contradiction  that for each $\vec{a}=\langle a_n: n<\omega \rangle \in {}^{\omega}2$
	there is a counterexample $\bold{\dot{f}}_{\vec a}$ and $ \langle  \dot{\mathbb{L}}^{\vec a}_\varepsilon: \varepsilon \leq \varepsilon(*) \rangle$ to clause $(g)$. In particular,
	$\bold{\dot{f}}_{\vec a} \in \Hom(\mathbb G^{2, \vec{a}}, \dot{\mathbb{L}}_{\varepsilon(*)})$,
	and it extends $\bold f \cup \id_{\dot{\mathbb G}^1}$.

	For $\vec a \in {}^{\omega}2$ and $m<\omega$ set
	\[
(+) \quad\quad\quad	y^{\vec a}_m:= \sum\limits_{n \geq m}\frac{n!}{m!}(y_n+a_n z_n).
	\]

	\begin{enumerate}
	\item[{\bf Claim}](A): Let $\vec{a} $, $ \vec{b}$ and $m$ be such that
		\begin{enumerate}
	\item[$\bullet$] $m<\omega$,
	\item[$\bullet$] $a_\ell=b_\ell$ for all $\ell\leq m $,
	\item[$\bullet$]	$\bold{\dot{f}}_{\vec a} (y^{\vec a}_\ell)=\bold{\dot{f}}_{\vec b} (y^{\vec b}_\ell)$ for all $\ell \leq n$.
	\end{enumerate}
	Then $\vec{a} = \vec{b}$.
\end{enumerate}	
To prove the claim,
 we proceed by induction on $\ell < \omega$ and show that:
		\begin{enumerate}
	\item[(i)] $a_{\ell}=b_{\ell}$,
	\item[(ii)] 	$\bold{\dot{f}}_{\vec a} (y^{\vec a}_\ell)=\bold{\dot{f}}_{\vec b} (y^{\vec b}_\ell)$.
\end{enumerate}
The case $\ell \leq m$ follows from the assumption, so we may assume that $\ell > m$.
\iffalse
Suppose first that $\ell=0$:  We search  for  $\vec{a}\neq \vec{b}$ such that $a_0=b_0$ with the vanishing of  $$\bold{\dot{f}}_{\vec a} (y^{\vec a}_0)-\bold{\dot{f}}_{\vec b} (y^{\vec b}_0)=\sum\limits_{n \geq 1} n! (a_n-b_n) z_n$$Let $c_0=0$ and $c_n:=a_n-b_n\in\{0,1,-1\}$ for all $n>0$. We are going to find a nontrivial  $\{c_n\}$
with the property that $$x:=\sum\limits_{n \geq 1} n! c_n z_n\in \widehat{\bigoplus\limits_{n<\omega} z_n \mathbb{Z}}$$becomes zero.
This is subject of Discussion \ref{dis}.
\fi
 Now suppose that $\ell =k+1$ and the result is true for all natural numbers less or equal to
$k$. Let us evaluate  $\bold{\dot{f}}_{\vec a}$ at $y^{\vec a}_m$ from $(+)$, and recall that $\bold{\dot{f}}_{\vec a}$ extends $\bold f \cup \id_{\dot{\mathbb G}^1}$. It turns out that:$$\bold{\dot{f}}_{\vec a} (	y^{\vec a}_m)= \sum\limits_{n \geq m}(\frac{n!}{m!}\bold{\dot{f}}_{\vec a} (y_n)+a_n\bold{\dot{f}}_{\vec a} (z_n))= \sum\limits_{n \geq m}\frac{n!}{m!} (y_n+a_n z_n).$$

It immediately follows that for all $i<\omega$
$$(\ast)\quad\quad\quad\bold{\dot{f}}_{\vec a} (y^{\vec a}_i)=(i+1)\bold{\dot{f}}_{\vec a} (y^{\vec a}_{i+1})-(y_i+a_iz_i).$$
We  apply $(\ast)$ at level $i=k$ and combine it with inductive hypothesis to observe that

	\begin{equation*}
\begin{array}{clcr}
0&=\bold{\dot{f}}_{\vec a} (y^{\vec a}_k)-\bold{\dot{f}}_{\vec b} (y^{\vec b}_k)\\ &= \ell\bold{\dot{f}}_{\vec a} (y^{\vec a}_{\ell})-(y_{\ell-1}+a_{\ell-1}z_{\ell-1})-\ell\bold{\dot{f}}_{\vec a} (y^{\vec a}_{\ell})+(y_{\ell-1}+a_{\ell-1}z_{\ell-1})\\
&
= \ell(\bold{\dot{f}}_{\vec a} (y^{\vec a}_\ell)-\bold{\dot{f}}_{\vec b} (y^{\vec b}_\ell)).
\end{array}
\end{equation*}	
Since $\dot{\mathbb{L}}_{\varepsilon(*)}$ is torsion-free, it follows that $\bold{\dot{f}}_{\vec a} (y^{\vec a}_\ell)-\bold{\dot{f}}_{\vec b} (y^{\vec b}_\ell)=0$, so (ii) is valid. In order to show  (i) we apply $(\ast)$
at level $i=\ell$:

$$0=\bold{\dot{f}}_{\vec a} (y^{\vec a}_\ell)-\bold{\dot{f}}_{\vec b} (y^{\vec b}_\ell)= (\ell+1)(\bold{\dot{f}}_{\vec a} (y^{\vec a}_{\ell+1})-\bold{\dot{f}}_{\vec b} (y^{\vec b}_{\ell+1}))-(a_{\ell}-b_{\ell})z_{\ell},
$$
i.e.,$$(a_{\ell}-b_{\ell})z_{\ell}=(\ell+1)(\bold{\dot{f}}_{\vec a} (y^{\vec a}_\ell)-\bold{\dot{f}}_{\vec b} (y^{\vec b}_\ell)).$$
It follows  that $(\ell+1)|(a_{\ell}-b_{\ell})$. Since $a_i,b_i\in\{0,1\}$, we must have $a_{\ell}-b_{\ell}=0$.
This completes the proof of the desired claim.
Let $\vec{a}$ and $\vec{b}$ be distinct in ${}^{\omega}2$
such that $a_0=b_0$ and $\bold{\dot{f}}_{\vec a} (y^{\vec a}_0)=\bold{\dot{f}}_{\vec b} (y^{\vec b}_0)$. This contradicts
 Claim(A), so we are done.
\end{proof}

\begin{remark}
Adopt the notation of Lemma	\ref{nonext}, and only  replace ``free'' by ``$\aleph_1$-free''.
Then the same claim, as Lemma	\ref{nonext} indicates, is valid. We leave its straightforward modification   to the reader.
\end{remark}

Recall that  $S^\lambda_{\aleph_0}=\{\alpha < \lambda: \cf(\alpha)=\aleph_0     \}$.
The main point in the development of Theorem B) is   to introduce $\boxtimes_{\lambda,\mu}^1$ from Theorem \ref{a2}. Here, we present a variation of it:

\begin{theorem}
	\label{underdiamond}Let $S \subseteq
	S^\lambda_{\aleph_0}$ be stationary non-reflecting and suppose
	$\diamondsuit_S$ holds. Suppose one of the followings:
\begin{enumerate}
	\item[$(1)$]	 $\mu = \lambda = \cf(\lambda) > \aleph_0$, or
	
	\item[$(2)$]  $\mu = 2^\lambda,\lambda =  \cf(\lambda) > \aleph_0$.
	\end{enumerate}
Let $\iota =1, 2$.
 Then  there are sequences
  $\bar{\mathbb{G}}^\iota =
 \langle \mathbb{G}^\iota_\alpha:\alpha < \mu \rangle$   of abelian groups such that  the following assertions are valid:
\begin{enumerate}	\item[(i)]  $\mathbb{G}^1_\alpha \subseteq \mathbb{G}^2_\alpha$ are free abelian groups
	cardinality $\lambda$,
	\item[(ii)] for each $\alpha$,  $\mathbb{G}_\alpha:=\mathbb{G}^2_\alpha/ \mathbb{G}^1_\alpha$ is a strongly $\lambda$-free abelian
	group of cardinality $\lambda$,
	\item[(iii)]  if $\alpha < \mu $ and $\mathbb{L}$ is
	constructible by $\{\mathbb{G}_\gamma:\gamma \in \mu \setminus \alpha\}$
	over $\mathbb{G}^1_\alpha$, then there is no homomorphism ${\bf g}:
	\mathbb{G}^2_\alpha\to\mathbb{L}$ extending $\id_{\mathbb{G}^1_\alpha}$.
\end{enumerate}
\end{theorem}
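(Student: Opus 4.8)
The plan is to manufacture exactly the hypothesis $\boxtimes^1_{\lambda,\mu}$ of Theorem \ref{a2} by a transfinite construction along $\lambda$, in which $\diamondsuit_S$ anticipates and Lemma \ref{nonext} defeats every potential identity-extending homomorphism. For each $\alpha<\mu$ I build continuous filtrations $\mathbb{G}^\iota_\alpha=\bigcup_{\nu<\lambda}\mathbb{G}^\iota_{\alpha,\nu}$ ($\iota=1,2$) by free groups of size $<\lambda$, maintaining $\mathbb{G}^1_{\alpha,\nu}=\mathbb{G}^1_\alpha\cap\mathbb{G}^2_{\alpha,\nu}\subseteq\mathbb{G}^2_{\alpha,\nu}$ and arranging that the successive quotients $\mathbb{G}^\iota_{\alpha,\nu+1}/\mathbb{G}^\iota_{\alpha,\nu}$ together with $\mathbb{G}^2_{\alpha,\nu+1}/(\mathbb{G}^1_{\alpha,\nu+1}+\mathbb{G}^2_{\alpha,\nu})$ are free; these are precisely clauses (1)--(5) of Lemma \ref{nonext}. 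Off $S$ I extend freely (direct sum with a fresh free group), which preserves freeness of all quotients, including those of the induced filtration $\mathbb{G}_{\alpha,\nu}:=\mathbb{G}^2_{\alpha,\nu}/\mathbb{G}^1_{\alpha,\nu}$ of $\mathbb{G}_\alpha$. At a point $\delta\in S$ (so $\cf(\delta)=\aleph_0$) I run the completion step of Lemma \ref{nonext}: fixing a cofinal $\omega$-sequence in $\delta$ and a branch $\vec a\in{}^{\omega}2$, I pass from $\mathbb{G}^\iota_{\alpha,\delta}$ to $\dot{\mathbb{G}}^\iota=\mathbb{G}^\iota_{\alpha,\delta+1}$ by adjoining the distinguished element $\sum_{n}n!(y_n+a_n z_n)$, which both injects a non-free layer into $\mathbb{G}_\alpha$ at $\delta$ and primes the non-extendability.

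To drive property (iii) I first invoke Fact \ref{dec} to split $S=\bigcup_\beta S_\beta$ into stationary pieces each carrying its own diamond, reserving pieces for each $\alpha$; on a reserved piece the $\diamondsuit$-sequence guesses at $\delta$ a quadruple: a code for $\alpha$, an initial segment of $\mathbb{G}^2_\alpha$, an initial segment of a candidate construction $\bar{\mathbb{L}}$ by $\{\mathbb{G}_\gamma:\gamma\in\mu\setminus\alpha\}$ over $\mathbb{G}^1_\alpha$, and a candidate map $\mathbf g$ restricted to the part of $\mathbb{G}^2_\alpha$ built so far. Whenever the guess is coherent I apply Lemma \ref{nonext} with $\mathbf f:=\mathbf g$ on the constructed portion, choosing $\mathbb{G}^\iota_{\alpha,\delta+1}$ so that no extension of $\mathbf g\cup\id_{\dot{\mathbb{G}}^1}$ over any end-extension of $\bar{\mathbb{L}}$ survives past $\delta$. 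The verification of (iii) is then the standard reflection argument: if some genuine construction $\mathbb{L}$ admitted $\mathbf g:\mathbb{G}^2_\alpha\to\mathbb{L}$ extending $\id_{\mathbb{G}^1_\alpha}$, then the set of $\delta$ at which $\diamondsuit_S$ correctly guesses $(\alpha,\mathbb{G}^2_\alpha\restriction\delta,\bar{\mathbb{L}}\restriction\delta,\mathbf g\restriction\delta)$ is stationary, and the local kill of Lemma \ref{nonext} at any such $\delta$ contradicts the existence of the global $\mathbf g$.

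Properties (i) and (ii) reduce to the freeness bookkeeping and the $\Gamma$-calculus. Each $\mathbb{G}^\iota_\alpha$ is free because Lemma \ref{nonext}(d) keeps its filtration quotients free across the $S$-steps as well, so $\Gamma(\mathbb{G}^\iota_\alpha)=\emptyset/\cD_\lambda$ and Lemma \ref{comch} gives freeness. For $\mathbb{G}_\alpha$ the induced filtration $\langle\mathbb{G}^2_{\alpha,\nu}/\mathbb{G}^1_{\alpha,\nu}\rangle$ has free quotients off $S$ and a non-free one exactly at each $\delta\in S$, so by Definition \ref{a13} one computes $\Gamma(\mathbb{G}_\alpha)=S/\cD_\lambda$, which is stationary; hence $\mathbb{G}_\alpha$ is not free. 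Strong $\lambda$-freeness (Definition \ref{strongly}) is read off the same filtration, taking $\mathcal S$ to be the pure $<\lambda$-generated subgroups occurring in it: any $<\lambda$-sized subset sits below some $\gamma<\lambda$, and since $S$ is non-reflecting, $S\cap\gamma$ is non-stationary, which forces the relevant relative quotients to be free. In case (2), where $\mu=2^\lambda$, the additional groups arise by letting the branches $\vec a$ at the $S$-steps range over a tree with $2^\lambda$ cofinal branches; the injectivity established inside the proof of Lemma \ref{nonext} (its internal Claim) shows distinct branches yield groups whose homomorphisms cannot be matched, so each of the $2^\lambda$ resulting triples $(\mathbb{G}^1_\alpha\subseteq\mathbb{G}^2_\alpha,\ \mathbb{G}_\alpha)$ independently satisfies (i)--(iii).

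The main obstacle I anticipate is carrying out (iii) \emph{uniformly in $\alpha$}: the candidate target $\mathbb{L}$ is itself constructible from the other groups $\mathbb{G}_\gamma$ that are being built in the same induction, so the diamond must predict a \emph{coherent} fragment of $\bar{\mathbb{L}}$ together with $\mathbf g$, and the kill performed at stage $\delta$ for group $\alpha$ must remain compatible with every later construction step of every group. Orchestrating this bookkeeping — and, in case (2), guaranteeing that the $2^\lambda$-fold branching neither disturbs the freeness of the $\mathbb{G}^\iota_\alpha$ nor the computation $\Gamma(\mathbb{G}_\alpha)=S/\cD_\lambda$ — is the delicate point; the completion mechanism of Lemma \ref{nonext} and the non-reflection of $S$ are exactly the ingredients that make it succeed.
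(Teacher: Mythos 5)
Your case (1) follows the paper's own route almost exactly: split $S$ by Fact \ref{dec} into $\lambda$ pairwise almost disjoint stationary sets each carrying $\diamondsuit$, reserve one set $S_\varepsilon$ per group, build the filtrations $\langle\mathbb{G}^\iota_{\alpha,\nu}\rangle$ with free successor steps off $S_\varepsilon$, invoke Lemma \ref{nonext} at guessed stages $\delta\in S_\varepsilon$ to kill the predicted $(\bold g\restriction\delta,\bar{\mathbb{L}}\restriction\delta)$, and verify (iii) by the club-plus-stationarity reflection argument; your freeness and strong-$\lambda$-freeness bookkeeping (via clauses (c),(d) and non-reflection at limits) also matches the paper.

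The genuine gap is in case (2). You propose to obtain $2^\lambda$ groups from only $\lambda$ many diamond-stationary sets by varying the branches $\vec a$ in the Lemma \ref{nonext} completion step, claiming the internal Claim (A) of that lemma makes branch-distinct groups ``independent.'' This misreads the Claim: it is used, against a \emph{fixed} family of putative counterexamples $\{\dot{\bold f}_{\vec a}\}$, to conclude that \emph{some} branch admits no extending homomorphism; it does not separate groups built from different branches, and in any case distinctness of the groups is not the problem. The problem is the verification of (iii): if two groups $\alpha\neq\alpha'$ live on the same stationary set $S_\varepsilon$ (forced by pigeonhole when $\mu=2^\lambda>\lambda$), then a construction $\mathbb{L}$ by $\{\mathbb{G}_\gamma:\gamma\in\mu\setminus\alpha\}$ may use $\mathbb{G}_{\alpha'}$, whose non-free layers sit exactly on $S_\varepsilon$ --- i.e.\ exactly at the stages $\delta$ where the kill for $\alpha$ must take place. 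At such $\delta$ the genuine $(\bold g,\bar{\mathbb{L}})$ does not restrict to local data satisfying the end-extension freeness conditions (the analogues of $(g4)$--$(g6)$ of Lemma \ref{nonext}, clauses $(e4)$--$(e6)$ of the construction), since $\mathbb{G}_{\alpha',\delta+1}/\mathbb{G}_{\alpha',\delta}$ is not free; so no contradiction with clause (e) arises and (iii) fails to be certified. This is precisely why each group needs its \emph{own} stationary set, almost disjoint from all the others, and why the paper's case (2) rests on a further, nontrivial fact you never invoke: Devlin's theorem \cite{devlin} that there exist $2^\lambda$ pairwise almost disjoint stationary subsets of $\lambda$ on each of which $\diamondsuit$ holds. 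Replacing that input by branch variation does not work.
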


\begin{proof}
 (1): Since $S$ is non-reflecting, given any limit ordinal $\delta < \lambda,$ there exists a club $C_\delta \subseteq \delta$ such that
$S \cap C_\delta=\emptyset.$
	Furthermore as $\diamondsuit_S$  holds, and in the light of Fact \ref{dec}, we can find a sequence $\langle S_\varepsilon:\varepsilon < \lambda \rangle$ of
	subsets of $S$ such that the following two properties are satisfied:
	\begin{itemize}
		\item the sets $S_\varepsilon$ are 	pairwise almost
		disjoint, i.e., for all $\varepsilon < \zeta < \lambda, S_\varepsilon \cap S_\zeta$ is a bounded subset of $\lambda,$
		\item	$\diamondsuit_{S_\varepsilon}$ holds  for $\varepsilon < \lambda$.
	\end{itemize}
	For $\varepsilon < \lambda$ let
	\[
	\langle (f^\varepsilon_\alpha, \mathbb{H}^{1, \varepsilon}_\alpha, \mathbb{H}^{2, \varepsilon}_\alpha, \bar{\mathbb{L}}_\alpha): \alpha \in S_\varepsilon                 \rangle
	\]
	be such that:
	\begin{itemize}
		\item $f^\varepsilon_\alpha: \alpha \to \alpha$ is a function,
		
		\item $\mathbb{H}^{1, \varepsilon}_\alpha \subseteq \mathbb{H}^{2, \varepsilon}_\alpha$ are abelian groups,
		
		\item the universe of $\mathbb{H}^{2, \varepsilon}_\alpha$ is $\alpha$,

\item $\bar{\mathbb{L}}_\alpha=\langle \mathbb{L}^\alpha_{\varepsilon}: \varepsilon \leq \alpha \rangle$ is a construction such that $\mathbb{L}_{\alpha}$
has universe $\alpha,$

 \item  each $\mathbb{L}_\varepsilon$
and $\mathbb{L}_{\varepsilon+1}/ \mathbb{L}_\varepsilon$ is $\aleph_1$-free,
		
		\item for any tuple $(f, \mathbb{H}^1, \mathbb{H}^2, \bar{\mathbb{L}})$, where $f: \lambda \to \lambda$ is a function, $\mathbb{H}^1 \subseteq \mathbb{H}^2$
		are abelian groups where the universe of $\mathbb{H}^2$ is $\lambda$ and $\bar{\mathbb{L}}=\langle \mathbb{L}_{\varepsilon}: \varepsilon   \leq \lambda \rangle$ is a construction such that the universe of $\mathbb{L}_\lambda$ is $\lambda$ and $\mathbb{L}_\varepsilon$ and $\mathbb{L}_{\varepsilon+1}/ \mathbb{L}_\varepsilon$ are $\aleph_1$-free, then the following set
		\[
		\{ \alpha \in S_\varepsilon:  (f\restriction \alpha, \mathbb{H}^1 \restriction \alpha, \mathbb{H}^2 \restriction \alpha, \bar{\mathbb{L}} \restriction \alpha)=              (f^\varepsilon_\alpha, \mathbb{H}^{1, \varepsilon}_\alpha, \mathbb{H}^{2, \varepsilon}_\alpha, \bar{\mathbb{L}}_\alpha) \}
		\]
		is stationary.
	\end{itemize}
	Given $\varepsilon< \lambda$ by induction on $\alpha < \lambda$, we choose the sequences
	$\bar{\mathbb{G}}^\iota_\varepsilon = \langle
	\mathbb{G}^\iota_{\alpha,\varepsilon}:\alpha < \lambda\rangle$, for $\iota=1, 2$ such that:
	\begin{enumerate}
		\item[$\circledast$]
		\begin{enumerate}
			\item[$(a)$] $\bar G^\iota_\varepsilon$ is an increasing
			and continuous sequence of abelian groups $\mathbb{G}^\iota_{\alpha,\varepsilon}=(G^\iota_{\alpha, \varepsilon}, +, 0)$.
			
			\item[$(b)$]   $\mathbb{G}^\iota_{0,\varepsilon} = \{0\}$ and for $\alpha < \lambda,$
			the universe of $\mathbb{G}^2_{\alpha,\varepsilon}$, namely $G^2_{\alpha, \varepsilon}$ is an ordinal
			$\gamma_{\alpha,\varepsilon} < \lambda$.
			
			\item[$(c)$] The following holds:
			\begin{enumerate}
				\item[$(c1): $]  $\mathbb{G}^1_{\alpha,\varepsilon}
				\subseteq \mathbb{G}^2_{\alpha,\varepsilon}$ are free.
				%\sn
				\item[$(c2):$] If $\alpha < \beta$ then $
				\mathbb{G}^2_{\alpha, \varepsilon} \cap \mathbb{G}^1_{\beta, \varepsilon} = \mathbb{G}^1_{\alpha, \varepsilon}$.
				%\sn
				\item[$(c3): $]  If $\alpha < \beta$ and $\alpha \notin
				S_\varepsilon$, then  $\mathbb{G}^2_\beta/(\mathbb{G}^1_\beta + \mathbb{G}^2_\alpha)$ is free.
			\end{enumerate}
			\item[$(d)$]If $\alpha < \beta$ and $\alpha \notin S_\varepsilon$,
			then $\mathbb{G}^1_{\beta,\varepsilon}/\mathbb{G}^1_{\alpha,\varepsilon}$
			and $\mathbb{G}^2_{\beta,\varepsilon}/\mathbb{G}^2_{\alpha,\varepsilon}$ are
			free.
			
			\item[$(e)$] if $\delta \in S_\varepsilon,$ then there are no $\dot{\bold f}$ and $ \langle  \dot{\mathbb{L}}_\zeta: \zeta \leq \delta \rangle$ equipped with the following properties:
    %\rangle \in \Hom(\bar{\mathbb G}^2, \bar{\mathbb L})$
	%	such that $\bar{\bold f}$ extends $\bold f \cup \id_{\bar{\mathbb G}^1}$.
\begin{enumerate}
\item[$(e1)$] $\langle  \dot{\mathbb{L}}_\zeta: \zeta \leq \delta \rangle$ is a   construction over $\mathbb{G}_{\delta+1, \varepsilon}^1,$

\item[$(e2)$]  $\dot{\bold f} \in \Hom(\mathbb{G}^2_{\delta+1, \varepsilon}, \dot{\mathbb{L}}_{\delta})$,

\item[$(e3)$] $\dot{\bold f}$ extends $\bold{f}^\varepsilon_\delta \cup \id_{\mathbb{G}_{\delta+1, \varepsilon}^1}$. This means that $\mathbb{G}^2_{\delta, \varepsilon}\subseteq \mathbb{H}^{2, \varepsilon}_{\delta}$,  and also:

\[
\xymatrix@=3pc{
	& & \dot{\mathbb{L}}_{\delta} \\
	\mathbb{G}^1_{\delta+1, \varepsilon} \ar[r]_\subseteq\ar@/^/[rru]^{\subseteq} & \mathbb{G}^2_{\delta+1, \varepsilon} \ar[ru]_{\nexists\dot{\bold f}}   & \\
	\mathbb{G}^1_{\delta, \varepsilon} \ar [u]^\subseteq \ar[r]_\subseteq & \mathbb{G}^2_{\delta, \varepsilon}\ar[u] ^\subseteq \ar@/_/[ruu]_{\bold{f}^\varepsilon_\delta}  &
}
\]
\item[$(e4)$] $\mathbb{L}^\delta_\zeta \subseteq \dot{\mathbb{L}}_\zeta$ for $\zeta \leq \delta,$

\item[$(e5)$] $\mathbb{L}^\delta_\zeta = \mathbb{L}^\delta_{\zeta+1} \cap \dot{\mathbb{L}}_\zeta,$

\item[$(e6)$] $\dot{\mathbb L}_{\zeta+1}/ (\mathbb{L}^\delta_{\zeta+1}+\dot{\mathbb L}_\zeta)$ is free.
\end{enumerate}
\iffalse
 then there is no homomorphism $f: \mathbb{G}^2_{\delta+1, \varepsilon} \to \mathbb{L}$ such that:
			\begin{enumerate}
				\item[$(e1)$:]  $f$ extends $f^\varepsilon_\delta \cup \id_{\mathbb{G}^1_{\delta+1, \varepsilon}},$
				
				\item[$(e2):$]  $\mathbb{H}$ is an $\aleph_1$-strongly free abelian group,
				\item[$(e3):$]  $\mathbb{H}  \supseteq \mathbb{H}^{2, \varepsilon}_\delta$,
				\item[$(e4):$]  $\mathbb{H} \supseteq \mathbb{G}^1_{\delta+1, \varepsilon}$.
					\end{enumerate}
\fi
		\end{enumerate}
	\end{enumerate}
	For $\alpha=0,$ set $\mathbb{G}^1_{0,\varepsilon} = \mathbb{G}^2_{0,\varepsilon} = \{0\}.$
	For limit ordinal $\delta,$ set $\mathbb{G}^\iota_{\delta,\varepsilon} = \bigcup\limits_{\alpha < \delta}\mathbb{G}^\iota_{\alpha,\varepsilon}$.
	Let us show that items (c) and (d) continue to hold. By the induction hypothesis and for all $\alpha < \beta < \delta$ with $\alpha \notin S$,
	$\mathbb{G}^\iota_{\beta, \varepsilon} / \mathbb{G}^\iota_{\alpha, \varepsilon}$ and
	$\mathbb{G}^2_\beta/(\mathbb{G}^1_\beta + \mathbb{G}^2_\alpha)$ are free, and since we have a club $C_\delta$ of $\delta$
	which is disjoint to $S,$ it immediately follows that
	the groups $\mathbb{G}^\iota_{\delta, \varepsilon}$, $\mathbb{G}^\iota_{\delta, \varepsilon}/ \mathbb{G}^\iota_{\alpha, \varepsilon}$ and  $\mathbb{G}^2_{\delta, \varepsilon}/(\mathbb{G}^1_{\delta, \varepsilon} + \mathbb{G}^2_{\alpha, \varepsilon})$ are
	free for all $\alpha < \delta$ with $\alpha  \notin S$.
	
Let $\iota=1, 2$.	Now suppose that $\delta < \lambda$ and we have defined the groups $\mathbb{G}^\iota_{\alpha, \varepsilon}$
 and ordinals $\alpha \leq \delta$.
	We would like to define the groups $\mathbb G^1_{\delta+1, \varepsilon}$
	and $\mathbb G^2_{\delta+1, \varepsilon}$ so that the (a)-(e) continue to hold.

	In the case $\delta \notin S_\varepsilon,$ we set
	$$\mathbb{G}^1_{\delta+1, \varepsilon}:=\mathbb{G}^1_{\delta, \varepsilon}$$
	and  	
	$$\mathbb{G}^2_{\delta+1, \varepsilon}:=(\mathbb{G}^1_{\delta+1, \varepsilon} \oplus_{\mathbb{G}^1_{\delta+1, \varepsilon}}\mathbb{G}^2_{\delta, \varepsilon}) \oplus \bigoplus\limits_{n<\omega} y_{\delta, n} \mathbb Z.$$
	It is not difficult to show that items (a)-(d) continue to hold and there is nothing to prove for case (e).
	
	Now suppose that $\delta \in S_\varepsilon$. We define the groups $\mathbb{G}^1_{\delta+1, \varepsilon}$
	and $\mathbb{G}^2_{\delta+1, \varepsilon}$  such that items (a)-(e) above continue to hold, and further we have:
	
	\begin{enumerate}
		
		\item[$(f)$] $\mathbb{G}^2_{\delta+1,\varepsilon}/(\mathbb{G}^1_{\delta,\varepsilon}
		+ \mathbb{G}^2_{\delta,\varepsilon})$ is not free,	
		
		\item[$(g)$] if $\gamma \in
		\delta \backslash S_\varepsilon$, then $
		\mathbb{G}^2_{\delta+1,\varepsilon}/(\mathbb{G}^1_{\delta,\varepsilon} +
		\mathbb{G}^2_{\gamma,\varepsilon})$ is free.
		
		%\item[(h)] If $\mathbb L \supseteq \mathbb G^1_{\delta+1, \varepsilon}$ is a construction,
		%there is no $\bold f \in \Hom(\mathbb{G}^2_{\delta+1, \varepsilon}, \mathbb L)$
		%extending $f^\varepsilon_\delta \cup \id_{\mathbb G^1_{\delta+1, \varepsilon}}$.
		
		%		\item[$(h)$]	$\mathbb{G}^1_{\delta+1,\varepsilon}/\mathbb{G}^1_{\delta,\varepsilon} \cong
		%			\bbZ$,
		
		%			\item[$(i)$] if $\varepsilon=\zeta+1$, then $\mathbb{G}^2_{\delta+1,\varepsilon}/\mathbb{G}^1_{\delta+1,\zeta} \cong \bbZ$
		%and  $\mathbb{G}^2_{\delta+1,\zeta}/ (\mathbb{G}^1_{\delta+1, }) \cong \bbZ$.
		
		%	\item[$(j)$]  $G^1_{\alpha,\varepsilon} \cap
		%	G^2_{\alpha,\zeta} = G^1_{\alpha,\zeta}$.
	\end{enumerate}
	As $\delta \in S_\varepsilon$, we have $\cf(\delta) = \aleph_0$, so
	let $\langle \gamma_{\delta,n}:n < \omega\rangle$ be an increasing
	sequence of successor ordinals $< \delta$ with limit $\delta$.
	
	For any $n<\omega, \gamma_{\delta, n} \notin S_\varepsilon,$  it is easily seen that
	the abelian groups $\mathbb G^1_{\gamma_{\delta, n}, \varepsilon} \subseteq \mathbb{G}^2_{\gamma_{\delta, n}, \varepsilon}$
	satisfy the hypotheses of Lemma \ref{nonext}, hence by the lemma, we can find the groups $\mathbb G^1_{\delta+1, \varepsilon} \subseteq \mathbb G^2_{\delta+1, \varepsilon}$ such that there for all constructions $\mathbb L \supseteq \mathbb G^1_{\delta+1, \varepsilon}$ as described in (e),
	there is no $\bold f \in \Hom(\mathbb{G}^2_{\delta+1, \varepsilon}, \mathbb L)$
	such that $\bold f \supseteq \bold  f^\varepsilon_\delta \cup \id_{\mathbb G^1_{\delta+1, \varepsilon}}$.
	\iffalse
	Let
	$\mathbb{G}^1_{\delta+1,\varepsilon}$ extending $\mathbb{G}^1_{\delta,\varepsilon}$
	be defined by
	\[
	\mathbb{G}^1_{\delta+1,\varepsilon} := \mathbb{G}^1_{\delta,\varepsilon} \oplus
	\bigoplus\limits_{n <\omega} z_{\delta,\varepsilon,n}\bbZ.
	\]

	Let $	\mathbb{G}^2_{\gamma_{\alpha+1,n}} := (	\mathbb{G}^1_{\gamma_{\alpha+1,n}}
	\bigoplus\limits_{	\mathbb{G}^1_{\gamma_{\alpha,n}}} 	\mathbb{G}^2_{\gamma_{\alpha,n}})
	\oplus \bbZ y_{\gamma_{\alpha,n}}$.
	For a limit ordinal, we take the union, i.e., clause (d) in $\boxplus$ holds. To make things easier, we define $$\mathbb{G}^{2,*}_{\alpha,\varepsilon}:=\mathbb{G}^1_{\alpha,\varepsilon} \bigoplus\limits_{\mathbb{G}^1_{\beta,\varepsilon}}
	\mathbb{G}^2_{\beta,\varepsilon},$$with universe an ordinal $< \lambda$.  In other words,  $\mathbb{G}^2_{\alpha,\varepsilon}$ extends
	$\mathbb{G}^{2,*}_{\alpha,\varepsilon}$ by adding an element of the form
	$$\Sigma\{{n!}{(y_{\gamma_{\alpha,n}} + a_n z_{\alpha,\varepsilon_n})}:n <
	\omega\}$$ with $ a_n \in \bbZ$.  The demands of $\odot_1$ are fine (including
	freeness of quotients as $S$ suffice as $S_\varepsilon$ does not
	replace) for every choice of $\bar a =  \langle a_n:n < \omega
	\rangle$.  But if for every such $\bar a$ we have $\odot_2$ fails but
	the strong freeness results there we get a contradiction.
	\fi	
	
	This finishes our inductive construction. For $\iota=1, 2$
	and $\varepsilon < \lambda$, we  define:
	\[
	\mathbb{G}^\iota_\varepsilon = \bigcup\limits_{\alpha < \lambda} \mathbb{G}^{\iota}_{\alpha, \varepsilon}.
	\]
	Set also
	\begin{itemize}
		\item[(h1):] $\mathbb{G}_{\alpha, \varepsilon}=\mathbb{G}^{2}_{\alpha, \varepsilon}/ \mathbb{G}^{1}_{\alpha, \varepsilon}$,
		
		\item[(h2):] $\mathbb{G}_{\varepsilon}=\bigcup\limits_{\alpha < \lambda}\mathbb{G}_{\alpha, \varepsilon},$
		
		\item[(h3):] $\bar{\mathbb{G}}=\langle \mathbb{G}_{\varepsilon}: \varepsilon < \lambda \rangle.$
		
	\end{itemize}
	
	Let us show that the properties.

$(i)$: This is clear.
 	
$(ii)$: We apply the properties taken from items (c) and (d) of the construction, along with freeness  $\mathbb{G}^1_\varepsilon \subseteq \mathbb{G}^2_\varepsilon$
	to deduce  $ \mathbb{G}_{\varepsilon}$ is  strongly $\lambda$-free as witnessed by the sequence
	$$\mathcal{S}_\varepsilon=\{ \mathbb{G}_{\alpha, \varepsilon}: \alpha \in \lambda \setminus S_\varepsilon   \},$$for more details, see \cite[IV.1.11]{EM02}.

  $(iii)$: Suppose by contradiction that there are $\varepsilon < \lambda$, a construction
	$\bar{\mathbb{L}}=\langle \mathbb{L}_\varepsilon: \varepsilon \leq \lambda      \rangle$   by $\{\mathbb{G}_\alpha:\alpha \in \lambda \setminus \varepsilon\}$
	over $\mathbb{G}^1_\varepsilon$ and there is a  homomorphism ${\bf g}$ from
	$\mathbb{G}^2_\varepsilon$ into $\mathbb{L}_\lambda$ which extends $\id_{\mathbb{G}^1_\varepsilon}$:$$\xymatrix{
		&&  \mathbb{G}^1_\alpha \ar[r]^{\subseteq}\ar[d]_{\id}&\mathbb{G}^2_\alpha\ar[d]^{{\bf g}}\\
		&& \mathbb{G}^1_\alpha \ar[r]^{\subseteq}& \mathbb{L}
		&&&}$$
 Without loss of generality we can assume that $\mathbb{L}_\lambda$
	has size $\lambda$ and that its universe is $\lambda$. The
	set
	\begin{center}
		$E=\{\delta < \lambda: {\bf g} \restriction \delta: \delta \to \delta$ and $\bar{\mathbb{L}} \restriction \delta= \bar{\mathbb{L}}_\delta \restriction \delta$
 and $\mathbb{L}_\lambda \restriction \delta$ is a subgroup of $\mathbb{L} \}$
	\end{center}
	is a club, thus we can find some $\delta \in E \cap S_\varepsilon$ such that:
	\begin{itemize}
		\item[(i1):] ${\bf g} \restriction \delta=f^\varepsilon_\delta$,
		\item[(i2):] $\mathbb{L}_\kappa \restriction \delta=\mathbb{H}^{2, \varepsilon}_\delta$,
		\item[(i3):] $\mathbb{H}^{1, \varepsilon}_\delta=\mathbb{G}^1_{\delta, \varepsilon},$
		\item[(i4):] $\mathbb{G}^2_\varepsilon \cap \delta=\mathbb{G}^2_{\delta, \varepsilon}.$
	\end{itemize}
	Now note that $f={\bf g}\restriction \mathbb{G}^2_{\delta+1, \varepsilon}: \mathbb{G}^2_{\delta+1, \varepsilon} \to \mathbb{L}_\lambda$ is such that
	${f}^\varepsilon_\delta\cup \id_{\mathbb{G}_{\delta+1, \varepsilon}^1} \subseteq{f}$,
	which is in contradiction with clause (e) of the construction.

	$(2)$: The proof is similar to the proof of (1), this time, we find  the following family $$\langle S_\varepsilon: \varepsilon < 2^{\lambda} \rangle$$ of
 almost disjoint subsets of $\lambda$ such that $ \Diamond_{S_\varepsilon}$ holds for all $\varepsilon$. By \cite{devlin}, such a sequence exists.
\end{proof}

 Now, we are ready to prove Theorem (B):

\begin{theorem}
	\label{a1}
	Let $S \subseteq
	S^\lambda_{\aleph_0}$ be stationary non-reflecting and suppose
	$\diamondsuit_S$ holds. Suppose one of the followings:
	\begin{enumerate}
		\item[$(1)$]	 $\mu = \lambda = \cf(\lambda) > \aleph_0$, or
		
		\item[$(2)$]  $\mu = 2^\lambda,\lambda =  \cf(\lambda) > \aleph_0$.
	\end{enumerate} Then
	there are sequences $\langle \mathbb G_\alpha: \alpha < \mu \rangle$ and  $\langle \mathbb K_\alpha: \alpha < \mu \rangle$ of $\lambda$-free abelian groups such that for all $\alpha < \mu, |\mathbb G_\alpha|=\lambda,$ $|\mathbb K_\alpha|=2^\lambda$ and for all $\alpha, \beta < \mu,$
	$$\Ext(\mathbb G_\alpha, \mathbb K_\beta) = 0 \Leftrightarrow \alpha R \beta.$$
\end{theorem}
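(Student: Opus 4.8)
The plan is to obtain Theorem \ref{a1} as the direct splice of Theorem \ref{underdiamond} and Theorem \ref{a2}: the former manufactures, under $\diamondsuit_S$, a family of groups satisfying precisely the hypotheses $\boxtimes^1_{\lambda,\mu}$, while the latter converts any such family into a companion family realizing an arbitrary relation through the functor $\Ext$. The conceptual point I would stress at the outset is that although $R \subseteq \mu \times \mu$ is a completely general relation, we only ever evaluate $\Ext(\mathbb{G}_\alpha, \mathbb{K}_\beta)$ between members of two \emph{distinct} families $\langle \mathbb{G}_\alpha\rangle$ and $\langle \mathbb{K}_\beta\rangle$; no Ext-group inside a single family is constrained. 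Thus realizing $R$ is genuinely a bipartite problem with both sides indexed by $\mu$, which is exactly the shape handled by Theorem \ref{a2} once we set $\mu_1 = \mu_2 = \mu$.

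First I would invoke Theorem \ref{underdiamond} in the case matching the hypothesis (case (1) or case (2)) to produce sequences $\bar{\mathbb{G}}^1 = \langle \mathbb{G}^1_\alpha : \alpha < \mu\rangle$ and $\bar{\mathbb{G}}^2 = \langle \mathbb{G}^2_\alpha : \alpha < \mu\rangle$ together with the quotients $\mathbb{G}_\alpha := \mathbb{G}^2_\alpha / \mathbb{G}^1_\alpha$. I would then verify, clause by clause, that these meet $\boxtimes^1_{\lambda,\mu}$: clause (a) is the mere existence of the sequences; clause (b) follows because \ref{underdiamond}(i) supplies $\mathbb{G}^1_\alpha \subseteq \mathbb{G}^2_\alpha$ free of cardinality $\lambda$ while \ref{underdiamond}(ii) makes $\mathbb{G}_\alpha$ strongly $\lambda$-free of cardinality $\lambda$, hence in particular $\aleph_1$-free by Lemma \ref{ff}(i) since $\lambda \geq \aleph_1$; and clause (c), the crucial non-extension property, is verbatim \ref{underdiamond}(iii).

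Next I would feed this family into Theorem \ref{a2} with $\mu_1 = \mu_2 = \mu$ and the given $R$, extracting the sequence $\bar{\mathbb{K}} = \langle \mathbb{K}_\beta : \beta < \mu\rangle$. Taking the $\mathbb{K}_\alpha$ to be these groups, conclusion $\boxtimes^2_{\lambda,\mu}(\alpha)$ gives $|\mathbb{K}_\alpha| = 2^\lambda$, conclusion $(\beta)$ gives exactly $\Ext(\mathbb{G}_\alpha, \mathbb{K}_\beta) = 0 \iff \alpha R \beta$, and conclusion $(\gamma)$ upgrades the $\aleph_1$-freeness of each $\mathbb{K}_\alpha$ to $\lambda$-freeness, using that each $\mathbb{G}_\alpha$ is $\lambda$-free. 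Combined with the $\lambda$-freeness of the $\mathbb{G}_\alpha$ and the cardinality bookkeeping $|\mathbb{G}_\alpha| = \lambda$, this is precisely the assertion of Theorem \ref{a1}.

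The one hypothesis of Theorem \ref{a2} that demands care, and the only genuine obstacle, is the cofinality requirement $\cf(\mu_2) > \lambda$. In case (2), where $\mu_2 = \mu = 2^\lambda$, this is automatic since $\cf(2^\lambda) > \lambda$ by K\"onig's theorem, so nothing further is needed. In case (1), where $\mu_2 = \mu = \lambda$ and $\cf(\lambda) = \lambda \not> \lambda$, I would observe that in the proof of Theorem \ref{a2} the cofinality is invoked solely to guarantee that the range of any homomorphism out of a $\lambda$-sized group is absorbed at some stage of the length-$2^\lambda$ chain $\langle \mathbb{K}_{\zeta,\beta} : \zeta \leq 2^\lambda\rangle$ defining $\mathbb{K}_\beta$; what is actually used there is $\cf(2^\lambda) > \lambda$, which holds irrespective of $\mu$. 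Consequently the argument of Theorem \ref{a2} goes through unchanged in case (1) as well, and the proof is complete.
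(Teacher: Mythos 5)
Your proof is correct and is essentially the paper's own: the paper proves Theorem \ref{a1} by the one-line splice of Theorem \ref{underdiamond} (which, under $\diamondsuit_S$, produces the data $\boxtimes^1_{\lambda,\mu}$) with Theorem \ref{a2} (which converts that data into the companion family $\langle \mathbb{K}_\beta : \beta < \mu\rangle$), exactly as you do. The one place where you go beyond the paper is your final paragraph, and it is a genuine improvement rather than padding: in case (1) the hypothesis $\cf(\mu_2) > \lambda$ of Theorem \ref{a2} fails literally (there $\mu_2 = \mu = \lambda$ and $\cf(\lambda) = \lambda$), a point the paper's one-line proof passes over in silence. Your repair is the right one: in the proof of Theorem \ref{a2} the cofinality assumption is invoked exactly once, to absorb $\Rang(h)$, for $h \in \Hom(\mathbb{G}^1_\alpha, \mathbb{K}_\beta)$ with $|\mathbb{G}^1_\alpha| \le \lambda$, into some stage $\mathbb{K}_{\varepsilon,\beta}$ of the chain $\langle \mathbb{K}_{\zeta,\beta} : \zeta \le 2^\lambda\rangle$; since that chain has length $2^\lambda$ regardless of what $\mu_2$ is, what is actually used is $\cf(2^\lambda) > \lambda$, which K\"onig's theorem supplies. (An alternative repair that stays with the literal statement of Theorem \ref{a2}: extend $R$ to a relation $R' \subseteq \lambda \times 2^\lambda$ arbitrarily, apply Theorem \ref{a2} with $\mu_1 = \lambda$ and $\mu_2 = 2^\lambda$, and discard $\mathbb{K}_\beta$ for $\beta \ge \lambda$; restricting the conclusion to $\beta < \lambda$ gives case (1).) Your clause-by-clause verification of $\boxtimes^1_{\lambda,\mu}$ is also accurate, including the observation that strong $\lambda$-freeness from \ref{underdiamond}(ii) yields both the $\aleph_1$-freeness demanded in clause (b) and the $\lambda$-freeness that feeds conclusion $(\gamma)$ of Theorem \ref{a2}.
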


\begin{proof}
	This follows from Theorem \ref{a2} and Theorem  \ref{underdiamond}.
\end{proof}

\section{Representing a bipartite graph by Ext in ZFC}

In this section we show that it is possible to remove the diamond principle from the construction of Section 4 and get ZFC result. The main result is Theorem \ref{a1z}.   This  answers  Herden's question for the case of bipartite graphs.
We will do this  by using a simple version of  Shelah's black
box. In this case, the groups $\mathbb G_\alpha$ that we construct are not $\lambda$-free, but just $\aleph_1$-free.
\begin{notation}\label{hchi}
Let $\chi$ be   be infinite cardinal. By $\cH(\chi)$ we mean the collection of sets of hereditary cardinality
less than $\chi$.
\end{notation}
Let us start by stating the version of the black box	we are using in this paper.

\begin{theorem}	
	\label{blackboxthm} Let $\chi$, $\lambda$ and $\mu$   be infinite cardinals
	such that $\lambda=\mu^+, \mu^{\aleph_0}=\mu$, and $E_0, \ldots, E_{m-1}$ are pairwise
	disjoint stationary subsets of $\lambda$ consisting of ordinals of cofinality
	$\omega$, and $\chi>\lambda$. Let $N$ be an expansion in a countable language of
	$(\cH(\chi),\in, \lhd, \lambda)$ where  $\lhd$ is a well ordering of $\cH(\chi)$. Then there is a
	family of countable sets $\{(M_i, X_i): i \in I\}$ such that the following
	properties hold:
	
	\begin{enumerate}
		\item[(a)]
		$M_i\prec N$ and $X_i\subset \lambda$.
		\item[(b)]
		Let $\delta(i):=\sup(M_i\cap \lambda)$. If  $\delta(i)=\delta(j)$, then $(M_i, X_i)\cong(M_j, X_j)$
		and $M_i\cap M_j\cap \lambda$ is an initial segment	of $M_i\cap \lambda$.
		\item[(c)] For all $X\subset \lambda$, all $\ell<m$, the following  set $$\{\delta\in E_{\ell}:\exists i \emph{  such that }\delta(i) =\delta \emph{ and } (M_i, X_i)\equiv_{M_i\cap\lambda} (N, X)\}$$ is stationary in $\lambda$.
	\end{enumerate}	
\end{theorem}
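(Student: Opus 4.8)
The plan is to produce each pair $(M_i,X_i)$ as a \emph{trap}: a countable elementary submodel $M_i\prec N$ of the form $M_i=\mathrm{Sk}^N(A_i)$ for a countable $A_i\subseteq\lambda$ cofinal in $\delta(i):=\sup(M_i\cap\lambda)$, together with a subset $X_i\subseteq\lambda$ recording one possible behaviour of a generic predicate below $\delta(i)$. The two cardinal hypotheses carry all of the counting: since $\lambda=\mu^+$ each $\delta<\lambda$ has $|\delta|\le\mu$, and since $\mu^{\aleph_0}=\mu$ there are only $\mu$ countable subsets of $\delta$ and only $2^{\aleph_0}\le\mu$ relevant predicate-behaviours over a countable stem, so there are $\le\mu$ traps attached to each $\delta$ and $\le\lambda\cdot\mu=\lambda$ in total; equivalently $|I|\le\lambda^{\aleph_0}=(\mu^+)^{\aleph_0}=\mu^+=\lambda$.

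First I would fix, for every $\delta<\lambda$ of cofinality $\omega$, a canonical increasing cofinal sequence $\langle\delta_n:n<\omega\rangle$, and organise the admissible generating sets $A_i$ as branches of the tree ${}^{\omega>}\mu$, so that along a fixed $\delta$ the traces $A_i\cap\lambda$ form a tree: two of them either extend one another or diverge once and for all. Clause (a) is then immediate, since $M_i=\mathrm{Sk}^N(A_i)\prec N$ and $X_i\subseteq\lambda$. For clause (b), if $\delta(i)=\delta(j)$ then the two generating branches run through the same node-tree attached to $\delta$, so once they split they never meet again; hence $M_i\cap M_j\cap\lambda$ is their common stem, an initial segment of $M_i\cap\lambda$, and the isomorphism $(M_i,X_i)\cong(M_j,X_j)$ is arranged by realising every same-$\delta$ trap as a copy of one fixed countable template, so that only the ambient placement in $\cH(\chi)$ and in $\lambda$ varies.

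The heart is clause (c), and I expect it to be the main obstacle. Given $X\subseteq\lambda$ and $\ell<m$, I would build a continuous increasing $\prec$-chain $\langle\mathcal{M}_\alpha:\alpha<\lambda\rangle$ of elementary submodels of $N$ (of $N$ itself, not of an $X$-expansion, so that $E:=\{\delta:\mathcal{M}_\delta\cap\lambda=\delta\}$ is a club depending only on $N$) and, since $E_\ell$ is stationary, pick $\delta\in E\cap E_\ell$. At such a $\delta$ the canonical cofinal sequence generates a countable $M=\mathrm{Sk}^N(\{\delta_n:n<\omega\})\subseteq\mathcal{M}_\delta$ with $M\cap\lambda\subseteq\delta$ cofinal in $\delta$, which is one of the prebuilt trap models; the prebuilt predicates at $\delta$ are arranged to realise, up to the template isomorphism, every possible trace over $M\cap\lambda$, so one of them matches the actual behaviour of $X$ and yields $(M_i,X_i)\equiv_{M_i\cap\lambda}(N,X)$. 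This places $E\cap E_\ell$ inside the guessing set, so that set is stationary. The genuinely delicate point is the \emph{tension between clauses (b) and (c)}: richness forces me to carry a trap for every trace of $X$ over each countable stem, while the uniformity and initial-segment demands of (b) force all same-$\delta$ traps into one tree-organised isomorphism type. Reconciling these — realising all the required traces as isomorphic copies of a single template placed along eventually-disjoint branches, while keeping the totals at $\le\lambda$ — is exactly the combinatorial core of Shelah's Black Box \cite{Sh:172}, and the construction above is its specialisation to $\lambda=\mu^+$ with $\mu^{\aleph_0}=\mu$.
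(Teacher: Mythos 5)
You should know at the outset that the paper itself contains no proof of this statement: its entire ``proof'' is the citation \emph{See \cite[Page 444]{EM02}}, because the theorem is quoted verbatim as Eklof--Mekler's formulation of Shelah's Black Box, to be used as a tool in Section 5. Your proposal ultimately makes the same move --- your final sentences delegate ``the combinatorial core'' to \cite{Sh:172} --- but with an important difference: since the statement you are asked to prove \emph{is} that core, invoking the Black Box at the decisive moment is circular as a proof, and so what you have written must stand or fall on the concrete mechanism you describe before that point.

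That mechanism has a genuine gap, located in clause (c). You build the chain $\langle\mathcal M_\alpha:\alpha<\lambda\rangle$ elementary in $N$ alone (deliberately, so that the club $E$ is independent of $X$), take $M=\mathrm{Sk}^N(\{\delta_n:n<\omega\})$ at some $\delta\in E\cap E_\ell$, and then claim that a prebuilt predicate realising the correct \emph{trace} of $X$ over $M\cap\lambda$ yields $(M_i,X_i)\equiv_{M_i\cap\lambda}(N,X)$. This inference is false: $\equiv_{M_i\cap\lambda}$ means that the two structures satisfy the same formulas \emph{in the language expanded by the predicate} with parameters from $M_i\cap\lambda$, and this is far stronger than agreement of traces. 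Indeed, taking $\alpha\in M_i\cap\lambda$ and the formula $P(\alpha)$ shows that $\equiv_{M_i\cap\lambda}$ already \emph{forces} $X_i\cap M_i\cap\lambda=X\cap M_i\cap\lambda$; but then, for any nonempty $X$ with $X\cap M_i\cap\lambda=\emptyset$, the sentence $\exists y\,P(y)$ is true in $(N,X)$ and false in $(M_i,X_i)$, so \emph{no} choice of $X_i$ makes your fixed, $X$-independent hull $M_i$ work. The failure is structural: a Skolem hull computed in $N$ knows nothing about $X$, so the countable model $(M_i,X\cap M_i)$ has no reason to reflect $(N,X)$ correctly. The genuine proof (the one in \cite{EM02}, and in \cite{Sh:172}) must instead approximate the \emph{expanded} structure $(N,X)$ by an $X$-dependent chain, extract at suitable $\delta\in E_\ell$ a countable $M^*$ elementary in $(N,X)$, and then show that $(M^*,X\cap M^*)$ is isomorphic over its trace to one of the traps built in advance; reconciling the richness this demands with the rigidity of clause (b) --- all same-$\delta$ traps isomorphic, intersecting in initial segments --- is exactly the content you have left unproved. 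As written, your clause (c) argument fails, and nothing in the rest of the sketch repairs it.
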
	

\begin{proof}
	See \cite[Page 444]{EM02}.
\end{proof}

\begin{notation}
	Given a torsion-free group $\mathbb{G}$ and a subgroup $\mathbb{H}\subseteq\mathbb{G}$. By $\mathbb{H}_{\ast}$
	we mean  the pure-closure, i.e.,  the smallest pure subgroup of $\mathbb{G}$ containing $\mathbb{H}$. In fact,
	$$\mathbb{H}_{\ast}=	\{g\in \mathbb{G}: ng\in \mathbb{H} \emph{ for some  nonzero }n\in\mathbb{Z} \}.$$
\end{notation}
The next task is to construct $\boxtimes_{\lambda,\mu}^1$ from Theorem \ref{a2} in ZFC:
\begin{theorem}
	\label{a52} Adopt one of the following assumptions:
	\begin{enumerate}
	\item[(1)] If $\lambda = \lambda^{\aleph_0},\mu = \lambda$,
	\item[(2)] If $\lambda = \lambda^{\aleph_0},\mu =
	2^\lambda$.
%	\item[(3)] $\lambda = \lambda^{\aleph_0},\mu = 2^\lambda$.
	\end{enumerate}	
Then $\boxtimes^1_{\lambda,\mu}$ from Lemma \ref{a2} holds.
\end{theorem}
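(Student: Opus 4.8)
The plan is to build, in ZFC, sequences $\bar{\mathbb G}^1 = \langle \mathbb G^1_\alpha : \alpha < \mu\rangle$ and $\bar{\mathbb G}^2 = \langle \mathbb G^2_\alpha : \alpha < \mu\rangle$ of free abelian groups of cardinality $\lambda$ with $\mathbb G^1_\alpha \subseteq \mathbb G^2_\alpha$, so that the three clauses (a)--(c) of $\boxtimes^1_{\lambda,\mu}$ hold with $\mathbb G_\alpha := \mathbb G^2_\alpha/\mathbb G^1_\alpha$. The guiding idea is to replace the role played by $\diamondsuit_S$ in Theorem \ref{underdiamond} by the prediction supplied by the black box of Theorem \ref{blackboxthm}. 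Since $\lambda = \lambda^{\aleph_0}$, I would first pass to $\mu^+$ where needed (or directly work with the stationary decomposition of a fixed stationary $S \subseteq S^\lambda_{\aleph_0}$) and fix $m$ pairwise disjoint stationary sets $E_0,\dots,E_{m-1} \subseteq S^\lambda_{\aleph_0}$ together with the black-box family $\{(M_i,X_i):i\in I\}$ predicting, for every candidate homomorphism and construction coded as a subset of $\lambda$, a stationary set of approximating points of cofinality $\omega$.

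The construction itself runs by induction on $\alpha < \mu$, exactly paralleling the $\circledast$-induction of Theorem \ref{underdiamond}: at successor stages $\delta \notin (\text{prediction set})$ I take the free amalgam $\mathbb G^2_{\delta+1} := (\mathbb G^1_{\delta+1}\oplus_{\mathbb G^1_\delta}\mathbb G^2_\delta)\oplus\bigoplus_{n<\omega} y_{\delta,n}\mathbb Z$, which keeps all the freeness-of-quotient clauses (c)--(d); at the distinguished points $\delta$ flagged by the black box I invoke Lemma \ref{nonext} to adjoin a transcendental-like element $\sum_{n<\omega} n!(y_n + a_n z_n)$ of the $\mathbb Z$-adic completion for a cleverly chosen $\vec a \in {}^\omega 2$. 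The key algebraic input is precisely Lemma \ref{nonext}(g): for a suitable $\vec a$ there is \emph{no} extension $\dot{\mathbf f}$ of a given $\mathbf f \cup \id$ to a homomorphism into any $\aleph_1$-free construction of the prescribed form. The cardinal arithmetic $\lambda = \lambda^{\aleph_0}$ guarantees that there are only $\lambda$ (resp.\ $2^\lambda$) many relevant finite approximations, so the diagonalization through ${}^\omega 2$ can be carried out against all threats predicted at each point.

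Clauses (a) and (b) of $\boxtimes^1_{\lambda,\mu}$ will be immediate from the construction: each $\mathbb G^1_\alpha \subseteq \mathbb G^2_\alpha$ is free of size $\lambda$ by transfinite bookkeeping of the generators, and $\mathbb G_\alpha$ is $\aleph_1$-free by Lemma \ref{aleffree} since $\mathbb G_\alpha$ is the union of the continuous chain $\langle \mathbb G^2_{\alpha,n}/\mathbb G^1_{\alpha,n}\rangle$ whose successive quotients are free (one only gets $\aleph_1$-freeness, not $\lambda$-freeness, which is exactly the price flagged in Theorem (C)). The real content is clause (c). To verify it I assume toward a contradiction that for some $\alpha$ there is a construction $\mathbb L$ by $\{\mathbb G_\gamma : \gamma \in \mu\setminus\alpha\}$ over $\mathbb G^1_\alpha$ and a homomorphism $\mathbf g : \mathbb G^2_\alpha \to \mathbb L$ extending $\id_{\mathbb G^1_\alpha}$. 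Coding $(\mathbf g, \bar{\mathbb L})$ as a subset $X \subseteq \lambda$ and applying the genericity clause (c) of Theorem \ref{blackboxthm}, I find a prediction point $\delta \in E_\ell$ at which a countable elementary submodel $M_i$ sees a correct finite approximation $(\mathbf g\restriction\delta,\bar{\mathbb L}\restriction\delta) = (M_i,X_i)$; restricting $\mathbf g$ to $\mathbb G^2_{\delta+1}$ then produces exactly the forbidden extension at stage $\delta$, contradicting the Lemma \ref{nonext}(g) condition that the construction was designed to enforce there.

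\textbf{The main obstacle} I expect is aligning the black-box prediction with the algebraic non-extendability of Lemma \ref{nonext}: one must verify that the countable model $M_i$ predicted by Theorem \ref{blackboxthm} captures enough of the intended homomorphism $\mathbf g$ and the construction $\bar{\mathbb L}$ — in particular the relevant generators $y_{\delta,n}, z_{\delta,n}$ and the amalgamation pushouts — so that the contradiction at $\delta$ is genuinely produced, and that the freeness clauses (f)--(g) used to feed Lemma \ref{nonext} survive the passage from the $\diamondsuit$-indexed construction to the $I$-indexed one. The elementarity $M_i \prec N$ and the isomorphism/initial-segment coherence in clause (b) of the black box are what make this gluing possible, but checking that the $\aleph_1$-freeness of the construction $\bar{\mathbb L}$ is preserved and that the prediction is genuinely stationary (so that a legitimate $\delta \in S^\lambda_{\aleph_0}$ is hit) is the delicate bookkeeping that must be done carefully.
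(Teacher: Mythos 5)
Your overall strategy is indeed the paper's: rerun the $\circledast$-induction of Theorem \ref{underdiamond} with the black box of Theorem \ref{blackboxthm} in place of $\diamondsuit$ (one instance per index $\varepsilon<\mu$, over a Solovay partition of $\lambda$ into stationary sets in case (1), respectively an almost disjoint family of stationary sets in case (2)), kill predicted homomorphisms at flagged points via Lemma \ref{nonext}, and verify clause (c) of $\boxtimes^1_{\lambda,\mu}$ by coding a putative $(\mathbf g,\bar{\mathbb L})$ as a set $X\subseteq\lambda$ and hitting it with the stationarity clause of the black box. However, there is a genuine gap exactly at the step you describe as ``invoke Lemma \ref{nonext} to adjoin a transcendental-like element $\sum_{n<\omega}n!(y_n+a_nz_n)$ for a cleverly chosen $\vec a$'': you treat a flagged ordinal $\delta$ as carrying a \emph{single} prediction, as in the diamond proof. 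The black box is weaker than $\diamondsuit$ precisely here: there may be many indices $i\in I_\varepsilon$ with $\delta(i)=\delta$, each supplying a different predicted pair $(f^\varepsilon_{\delta,i},\bar{\mathbb L}^\varepsilon_{\delta,i})$, and \emph{all} of them must be defeated at stage $\delta$, because in the final contradiction argument the black box only guarantees that \emph{some} such $i$ correctly predicts $(\mathbf g,\bar{\mathbb L})$, not one you get to choose. Lemma \ref{nonext} diagonalizes against extensions of one fixed $\mathbf f\cup\id$; a single adjoined element cannot be claimed to defeat all predicted homomorphisms at $\delta$, and your appeal to $\lambda=\lambda^{\aleph_0}$ (``the diagonalization through ${}^{\omega}2$ can be carried out against all threats predicted at each point'') is a hope, not an argument.

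The paper's resolution of this multiplicity is the actual new content of Theorem \ref{a52} relative to Theorem \ref{underdiamond}. It defines $\Sigma_{\delta,\varepsilon}$ as the set of all $i$ with $\delta(i)=\delta$ whose $X^\varepsilon_i$ codes a legitimate pair, applies Lemma \ref{nonext} once for \emph{each} $i\in\Sigma_{\delta,\varepsilon}$ to get pairs $\mathbb G^{1,i}_{\delta+1,\varepsilon}\subseteq\mathbb G^{2,i}_{\delta+1,\varepsilon}$, and then takes
\[
\mathbb G^{\iota}_{\delta+1,\varepsilon}:=\Bigl\langle\, \mathbb G^{\iota}_{\delta,\varepsilon}\cup\bigcup\{\mathbb G^{\iota,i}_{\delta+1,\varepsilon}: i\in\Sigma_{\delta,\varepsilon}\}\,\Bigr\rangle_{\ast},
\]
the pure closure inside the fixed completion $\widehat{\mathbb C_\varepsilon}$. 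The price is that $\aleph_1$-freeness of the quotients (clauses (c), (d), (g)) is no longer automatic for this amalgam; the paper proves it by taking an arbitrary countable subgroup $\mathbb K$, which meets only countably many indices $i_m\in\Sigma_{\delta,\varepsilon}$, and invoking clause (b) of Theorem \ref{blackboxthm} (models with the same $\delta(i)$ are isomorphic and pairwise coherent on initial segments) to choose integers $n(m)$ so that the generator supports $\{\alpha_{n,i_m}: n>n(m)\}$ are pairwise disjoint and lie inside one fixed ladder $\{\alpha_k:k<\omega\}$, from which an explicit free basis of the relevant quotient is assembled. You correctly flag ``the freeness clauses survive'' as the delicate point, but you supply no mechanism for it; this multiplicity-plus-pure-closure argument is the missing idea, and without it the induction hypotheses (a)--(e) cannot be carried past the flagged stages, so the construction does not get off the ground.
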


\begin{proof}
(1):  By a result of Solovay (see \cite[Corollary II.4.9]{EM02}) we can find a partition
$\langle  S_\varepsilon: \varepsilon < \lambda     \rangle$  of $\lambda$ into $\lambda$ many disjoint stationary sets. Let  $$\mathbb{C}_\varepsilon:=\bigoplus\limits_{\nu < \lambda, n<\omega}(y_{\varepsilon, \nu, n}\mathbb Z \oplus z_{\varepsilon, \nu, n}\mathbb Z)$$ and
 recall that  its $\mathbb Z$-adic completion is denoted  by $\widehat{\mathbb C_\varepsilon}$. Clearly, $\widehat{\mathbb C_\varepsilon}$ has cardinality $\lambda,$
so we identify it with $\lambda$. Let also
\[
Y_\varepsilon: = \{ y_{\varepsilon, \nu, n}: \nu < \lambda, n<\omega                    \} \cup \{z_{\varepsilon, \nu, n}: \nu < \lambda, n<\omega  \}.
\]
Set $\chi= \lambda^{+3}$. The initial structure for the Black box, corresponding to the stationary set $S_\varepsilon$ is  as follows:
\[
N_\varepsilon:= (\cH(\chi), \in, \lhd, \lambda, \widehat{C_\varepsilon},  Y_\varepsilon),
\]
where $\widehat{C_\varepsilon}$ denotes the 3-ary relation on $\lambda$
which is the graph of the addition operation on the group $\widehat{\mathbb C_\varepsilon}$.
We take the first bijection $\bold g_\varepsilon: \lambda \times \lambda \to \lambda$ with respect to $\lhd$, and use it to identify
each $X \subseteq \lambda$ with a subset of $\widehat{C_\varepsilon} \times \widehat{C_\varepsilon}.$
Let $\{(M^\varepsilon_i, X^\varepsilon_i): i \in I_\varepsilon       \}$
be as in the statement of Theorem \ref{blackboxthm} when $m=1$ and $E_0=S_\varepsilon$, and note that for each $i\in I_\varepsilon$ and $\varepsilon<\lambda,$ $\bold g_\varepsilon \in M^\varepsilon_i$.

Let   $\iota=1, 2$. We proceed as in the previous section and
for a given $\varepsilon< \lambda$, by induction on $\alpha < \lambda$, we choose the sequences
$\bar{\mathbb{G}}^\iota_\varepsilon = \langle
\mathbb{G}^\iota_{\alpha,\varepsilon}:\alpha < \lambda\rangle$ equipped with the following five items:

\begin{enumerate}
	\item[$(a)$] $\bar{\mathbb{G}}^\iota_\varepsilon$ is an increasing
	and continuous sequence of abelian groups $\mathbb{G}^\iota_{\alpha,\varepsilon}=(G^\iota_{\alpha, \varepsilon}, +, 0)$.
	
	\item[$(b)$]   $\mathbb{G}^\iota_{0,\varepsilon} = \{0\}$ and for $\alpha < \lambda,$
	the universe of $\mathbb{G}^2_{\alpha,\varepsilon}$, namely $G^2_{\alpha, \varepsilon}$ is an ordinal
	$\gamma_{\alpha,\varepsilon} < \lambda$.
	
	\item[$(c)$]  The following three properties hold:
	\begin{enumerate}
		\item[$(c1)$:]  $\mathbb{G}^1_{\alpha,\varepsilon}
		\subseteq \mathbb{G}^2_{\alpha,\varepsilon}$ are $\aleph_1$-free,
		%\sn
		\item[$(c2) $:] if $\alpha < \beta$ then $
		\mathbb{G}^2_{\alpha, \varepsilon} \cap \mathbb{G}^1_{\beta, \varepsilon} = \mathbb{G}^1_{\alpha, \varepsilon}$,
		%\sn
		\item[$(c3)$:]  if $\alpha < \beta$ and $\alpha \notin
		S_\varepsilon$, then  $\mathbb{G}^2_\beta/(\mathbb{G}^1_\beta + \mathbb{G}^2_\alpha)$ is $\aleph_1$-free.
	\end{enumerate}
	\item[$(d)$] If $\alpha < \beta$ and $\alpha \notin S_\varepsilon$,
	then $\mathbb{G}^1_{\beta,\varepsilon}/\mathbb{G}^1_{\alpha,\varepsilon}$
	and $\mathbb{G}^2_{\beta,\varepsilon}/\mathbb{G}^2_{\alpha,\varepsilon}$ are
	$\aleph_1$-free.
	
	\item[$(e)$] If $\delta \in S_\varepsilon,$  then there are no $f$ and $ \langle  \dot{\mathbb{L}}_\zeta: \zeta \leq \delta \rangle$ equipped with the following properties:
    %\rangle \in \Hom(\bar{\mathbb G}^2, \bar{\mathbb L})$
	%	such that $\bar{\bold f}$ extends $\bold f \cup \id_{\bar{\mathbb G}^1}$.
\begin{enumerate}
\item[$(e1)$] $\langle  \dot{\mathbb{L}}_\zeta: \zeta \leq \delta \rangle$ is a construction over $\mathbb{G}_{\delta+1, \varepsilon}^1,$

\item[$(e2)$]  $f \in \Hom(\mathbb{G}^2_{\delta+1, \varepsilon}, \dot{\mathbb{L}}_{\delta})$,

\item[$(e3)$] $f$ extends $f^\varepsilon_{\delta, i} \cup \id_{\mathbb{G}^1_{\delta+1, \varepsilon}}$
where $i \in I_\varepsilon$	and

$$\langle
f^\varepsilon_{\delta, i}, \bar{\mathbb{L}}^\varepsilon_{\delta, i}=\langle \mathbb{L}^\varepsilon_{\delta, i, \zeta}: \zeta \leq \delta \rangle \rangle$$ is coded by $X^\varepsilon_i$, under the identification given by $\bold g_\varepsilon$. Also, $f^\varepsilon_{\delta, i}$ is in $\Hom(\mathbb{G}^2_{\delta, \varepsilon}, \mathbb{L}^\varepsilon_{\delta, i, \delta})$ and $\bar{\mathbb{L}}^\varepsilon_{\delta, i}$ is a construction:

\[
\xymatrix@=3pc{
	& \dot{\mathbb{L}}_{\delta} \\
	\mathbb{G}^1_{\delta+1, \varepsilon} \ar[r]_\subseteq\ar [ru]^{\subseteq} & \mathbb{G}^2_{\delta+1, \varepsilon} \ar[u]^{\nexists{f}}   & \mathbb{G}^2_{\delta, \varepsilon}\ar[lu]_{{f}^\varepsilon_{\delta,i}} \ar[l]^{\subseteq} &
}
\]

%$\dot{\bold f}$ extends $\bold{f}^\varepsilon_\delta \cup \id_{\mathbb{G}_{\delta+1, \varepsilon}^1}$.

\item[$(e4)$] $\mathbb{L}^\delta_\zeta \subseteq \dot{\mathbb{L}}_\zeta$ for $\zeta \leq \delta,$

\item[$(e5)$] $\mathbb{L}^\delta_\zeta = \mathbb{L}^\delta_{\zeta+1} \cap \dot{\mathbb{L}}_\zeta,$

\item[$(e6)$] $\dot{\mathbb L}_{\zeta+1}/ (\mathbb{L}^\delta_{\zeta+1}+\dot{\mathbb L}_\zeta)$ is free.
\end{enumerate}
\iffalse
then there is no homomorphism $f: \mathbb{G}^2_{\delta+1, \varepsilon} \to \mathbb{H}$ such that:
	\begin{enumerate}
		\item[$(e1)$:] $f$ extends $f^\varepsilon_{\delta, i} \cup \id_{\mathbb{G}^1_{\delta+1, \varepsilon}},$ where $i \in I_\varepsilon$
		and $\langle
f^\varepsilon_{\delta, i}, \bar{\mathbb{L}}^\varepsilon_{\delta, i}=\langle \mathbb{L}^\varepsilon_{\delta, i, \zeta}: \zeta \leq \delta \rangle \rangle$ is coded by $X^\varepsilon_i$ (under the identification given by $\bold g_\varepsilon$), $f^\varepsilon_{\delta, i} \in \Hom()$.
		
		\item[$(e2)$:] $\mathbb{H}$ is an $\aleph_1$-free abelian group,
		%\item $\mathbb{H}  \supseteq \mathbb{H}^{2, \varepsilon}_\delta$,
		\item[$(e3)$:] $\mathbb{H} \supseteq \mathbb{G}^1_{\delta+1, \varepsilon}$.
	\end{enumerate}
\fi
\end{enumerate}
For $\alpha=0,$ set $\mathbb{G}^1_{0,\varepsilon} = \mathbb{G}^2_{0,\varepsilon} = \{0\}.$
For the limit ordinal $\delta,$ we set $\mathbb{G}^\iota_{\delta,\varepsilon}: = \bigcup\limits_{\alpha < \delta}\mathbb{G}^\iota_{\alpha,\varepsilon}$.
Let us show that items (c) and (d) continue to hold. By the induction hypothesis and for all $\alpha < \beta < \delta$ with $\alpha \notin S$,
$\mathbb{G}^\iota_{\beta, \varepsilon} / \mathbb{G}^\iota_{\alpha, \varepsilon}$ and
$\mathbb{G}^2_\beta/(\mathbb{G}^1_\beta + \mathbb{G}^2_\alpha)$ are $\aleph_1$-free, and since we have a club $C_\delta$ of $\delta$
which is disjoint to $S,$ it immediately follows from Lemma \ref{aleffree} that the groups $\mathbb{G}^\iota_{\delta, \varepsilon}$, $\mathbb{G}^\iota_{\delta, \varepsilon}/ \mathbb{G}^\iota_{\alpha, \varepsilon}$ and  $\mathbb{G}^2_{\delta, \varepsilon}/(\mathbb{G}^1_{\delta, \varepsilon} + \mathbb{G}^2_{\alpha, \varepsilon})$ are $\aleph_1$-free for all $\alpha < \delta$ with $\alpha  \notin S_\varepsilon$.

Now suppose that $\delta < \lambda$ and we have defined the groups $\mathbb{G}^\iota_{\alpha, \varepsilon}$
for $\iota=1, 2$ and ordinals $\alpha \leq \delta$.
We would like to define the groups $\mathbb G^1_{\delta+1, \varepsilon}$
and $\mathbb G^2_{\delta+1, \varepsilon}$ so that the (a)-(e) continue to hold.

If $\delta \notin S_\varepsilon,$ then we set
$$\mathbb{G}^1_{\delta+1, \varepsilon}:=\mathbb{G}^1_{\delta, \varepsilon}$$
and  	
$$\mathbb{G}^2_{\delta+1, \varepsilon}:=(\mathbb{G}^1_{\delta+1, \varepsilon}\oplus_{\mathbb{G}^1_{\delta+1, \varepsilon}}\mathbb{G}^2_{\delta, \varepsilon}) \oplus \bigoplus\limits_{n<\omega} y_{\varepsilon, \delta, n} \mathbb Z.$$
We leave to the reader to check that the items presented from  (a) to (d) all are valid, and recall that there is nothing to prove for case (e).

Now suppose that $\delta \in S_\varepsilon$. We define the groups $\mathbb{G}^1_{\delta+1, \varepsilon}$
and $\mathbb{G}^2_{\delta+1, \varepsilon}$  such that items (a)-(e) above continue to hold, and further we have:

\begin{enumerate}
	
	\item[$(f)$] $\mathbb{G}^2_{\delta+1,\varepsilon}/(\mathbb{G}^1_{\delta,\varepsilon}
	+ \mathbb{G}^2_{\delta,\varepsilon})$ is not free,	
	
	\item[$(g)$] if $\gamma \in
	\delta \backslash S_\varepsilon$, then $
	\mathbb{G}^2_{\delta+1,\varepsilon}/(\mathbb{G}^1_{\delta,\varepsilon} +
	\mathbb{G}^2_{\gamma,\varepsilon})$ is $\aleph_1$-free.
	
	%\item[(h)] If $\mathbb L \supseteq \mathbb G^1_{\delta+1, \varepsilon}$ is a construction,
	%there is no $\bold f \in \Hom(\mathbb{G}^2_{\delta+1, \varepsilon}, \mathbb L)$
	%extending $f^\varepsilon_\delta \cup \id_{\mathbb G^1_{\delta+1, \varepsilon}}$.
	
	%		\item[$(h)$]	$\mathbb{G}^1_{\delta+1,\varepsilon}/\mathbb{G}^1_{\delta,\varepsilon} \cong
	%			\bbZ$,
	
	%			\item[$(i)$] if $\varepsilon=\zeta+1$, then $\mathbb{G}^2_{\delta+1,\varepsilon}/\mathbb{G}^1_{\delta+1,\zeta} \cong \bbZ$
	%and  $\mathbb{G}^2_{\delta+1,\zeta}/ (\mathbb{G}^1_{\delta+1, }) \cong \bbZ$.
	
	%	\item[$(j)$]  $G^1_{\alpha,\varepsilon} \cap
	%	G^2_{\alpha,\zeta} = G^1_{\alpha,\zeta}$.
\end{enumerate}
As $\delta \in S_\varepsilon$, we have $\cf(\delta) = \aleph_0$, so
let $\langle \gamma_{\delta,n}:n < \omega\rangle$ be an increasing
sequence of successor ordinals $< \delta$ with limit $\delta$.

Let $n<\omega$ be such that $ \gamma_{\delta, n} \notin S_\varepsilon$. It turns out that
the abelian groups $\mathbb G^1_{\gamma_{\delta, n}, \varepsilon} \subseteq \mathbb{G}^2_{\gamma_{\delta, n}, \varepsilon}$
are suited well in  the hypotheses of Lemma \ref{nonext}.

The notation $\Sigma_{\delta, \varepsilon}$ stands for the following set:
$$\left\{i \in I_\varepsilon: \delta(i)=\delta\emph{ and }X^\varepsilon_i\emph{
codes }\langle
f^\varepsilon_{\delta, i}, \bar{\mathbb{L}}^\varepsilon_{\delta, i}=\langle \mathbb{L}^\varepsilon_{\delta, i, \zeta}: \zeta \leq \delta \rangle \rangle\emph{
as in }(e3)\right\}.$$

Given any $i \in \Sigma_{\delta, \varepsilon}$, and according to  Lemma \ref{nonext}, we can find the $\aleph_1$-free groups $\mathbb G^{1, i}_{\delta+1, \varepsilon} \subseteq \mathbb G^{2, i}_{\delta+1, \varepsilon}$ such that there for all constructions $\langle\dot{\mathbb L}_\zeta: \zeta \leq \delta \rangle$ over $\mathbb G^{1,i}_{\delta+1, \varepsilon}$
as in item (e), if we set $\mathbb{L}=\dot{\mathbb{L}}_\delta$, then
there is no $\bold f \in \Hom(\mathbb{G}^{2, i}_{\delta+1, \varepsilon}, \mathbb L)$
such that $\bold f \supseteq f^\varepsilon_{\delta, i} \cup \id_{\mathbb G^{1, i}_{\delta+1, \varepsilon}}$.
For $\iota=1,2$ we look at \[
\mathbb G^\iota_{\delta+1, \varepsilon} := \langle  \mathbb G^\iota_{\delta, \varepsilon} \cup \bigcup\{\mathbb G^{\iota, i}_{\delta+1, \varepsilon}: i \in \Sigma_{\delta, \varepsilon }   \}                             \rangle_{\ast},
\] i.e.,
 the pure closure of  $\langle  \mathbb G^\iota_{\delta, \varepsilon} \cup \bigcup\{\mathbb G^{\iota, i}_{\delta+1, \varepsilon}: i \in \Sigma_{\delta, \varepsilon} \}  \rangle$
in $\widehat{\mathbb C_\varepsilon}$.
Let us show that the hypothesis (a)-(e) hold. We first show that the group $\frac{\mathbb{G}^2_{\delta+1, \varepsilon}}{\mathbb{G}^2_{\alpha, \varepsilon}}$ is  $\aleph_1$-free, provided that $\alpha$ is
	a successor ordinal.
	 Let $\mathbb{K}$
	be any countable subgroup of $\mathbb{G}^2_{\delta+1, \varepsilon}$. We are going to show that $\frac{\mathbb{K}+\mathbb{G}^2_{\delta+1, \varepsilon}}{\mathbb{G}^2_{\alpha, \varepsilon}}$ is  free.
There is an $\omega$-sequence $\{i_m:m \in \omega\}$ together with  a countable subgroup $\mathbb{I}\subset\mathbb{G}^2_{\delta, \varepsilon}$   such that $\mathbb{K}$ is the subgroup generated
	by $\mathbb{I}$   together with some countable subset $\{w_{n,{i_m}}:n, m \in \omega\}$ of $\bigcup\limits_{i\in \Sigma_{\delta,\varepsilon}}\mathbb G^{\iota, i}_{\delta+1, \varepsilon}$. We can assume
	that for all $n, m$, $y_{\varepsilon, \alpha_{n,{i_m}}}\in \mathbb{I}$. Choose an increasing sequence of
	ordinals $\{\alpha_k: k < \omega\}$ with limit $\delta$ such that $\alpha_0=\alpha$ and for all $m\in\omega$
	and all but finitely many of $n$, $\alpha_{n,i_m}\in\{\alpha_k :k < \omega \}$. Notice that
	for any successor ordinal $\gamma < \delta$
 $$\mathbb{I}/ (\mathbb{I} \cap \mathbb{G}^2_{\gamma, \varepsilon} ) \cong (\mathbb{I} + \mathbb{G}^2_{\gamma, \varepsilon} )/  \mathbb{G}^2_{\gamma, \varepsilon}$$
 which is
	free by the induction hypothesis. So for such $\gamma$, $\mathbb{I}   \cap  \mathbb{G}^2_{\gamma, \varepsilon} $ is a direct summand of $\mathbb{I}$.
	Inductively choose
	subgroups $\mathbb{I}_k$ so that:$$\mathbb{I}\cap   \mathbb{G}^2_{{\alpha_k+1}, \varepsilon}   \oplus \mathbb{I}_k=\mathbb{I}\cap   \mathbb{G}^2_{{\alpha_{k+1}}, \varepsilon}         $$ for all $k$.   Hence
$$\mathbb{I}=\bigoplus_k \mathbb{I}_k\oplus\bigoplus\mathbb{Z}y_{\varepsilon, \alpha_k,n}.$$
	
In view of Theorem
\ref{blackboxthm}(b), we are able to choose $\{n(m):m < \omega\}$ so that:
\begin{itemize}
\item the collections $\{\alpha_{n,i_m} :n(m) <  n\}$ are pairwise disjoint.	

\item $\{\alpha_{n,i_m} :n(m) <  n\}\subset  \{\alpha_k: k < \omega\}$.
\end{itemize}
We observe that $\frac{\mathbb{K}+\mathbb{G}^2_{\delta, \varepsilon}}{\mathbb{G}^2_{\delta, \varepsilon}}$
	is isomorphic to the direct sum of  $\bigoplus_k \mathbb{I}_k$ together with the group
	freely generated by $$\{w_{n,{i_m}}:  n(m)\leq n \emph{ and }m \in\omega \} $$ and $$\{y_{\varepsilon, \alpha_k,n}:\forall
	m < \omega \text{~ and~ }n(m) \leq n, \alpha_k\neq \alpha_{n,i_m}\}.$$
From this, the claim  follows. By  a similar argument,
the group $\frac{\mathbb{G}^1_{\delta+1, \varepsilon}}{\mathbb{G}^1_{\alpha, \varepsilon}}$ is  $\aleph_1$-free, provided that $\alpha$ is
	a successor ordinal.

 In the same vein, we also observe that the group  $\mathbb G^1_{\delta+1, \varepsilon} $
is $\aleph_1$-free and the hypotheses (a)-(e) continue to hold.

The rest of the argument is similar to Theorem  \ref{underdiamond}. Let us elaborate the main idea of the proof.
 For $\iota=1, 2$
	and $\varepsilon < \lambda$, we  define:
	\[
	\mathbb{G}^\iota_\varepsilon = \bigcup\limits_{\alpha < \lambda} \mathbb{G}^{\iota}_{\alpha, \varepsilon}.
	\]
	Set also
	\begin{itemize}
		\item[(h1):] $\mathbb{G}_{\alpha, \varepsilon}=\mathbb{G}^{2}_{\alpha, \varepsilon}/ \mathbb{G}^{1}_{\alpha, \varepsilon}$,
		
		\item[(h2):] $\mathbb{G}_{\varepsilon}=\bigcup\limits_{\alpha < \lambda}\mathbb{G}_{\alpha, \varepsilon},$
		
		\item[(h3):] $\bar{\mathbb{G}}=\langle \mathbb{G}_{\varepsilon}: \varepsilon < \lambda \rangle.$
		
	\end{itemize}
	
	Let us show that $\boxtimes^1_{\lambda,\mu_1}$ is satisfied. By items (c) and (d) of the construction, $\mathbb{G}^1_\varepsilon \subseteq \mathbb{G}^2_\varepsilon$
	are $\aleph_1$-free
	and $ \mathbb{G}_{\varepsilon}$ is   $\aleph_1$-free as well.
	Let us now show that $\boxtimes^1_{\lambda,\mu_1}(c)$ is satisfied as well. Suppose by contradiction that there are $\varepsilon < \lambda$, a construction
	$\mathbb{L}$   by $\{\mathbb{G}_\alpha:\alpha \in \lambda \setminus \varepsilon\}$
	over $\mathbb{G}^1_\varepsilon$, witnessed by $\langle \mathbb{L}_\varepsilon: \varepsilon \leq \lambda \rangle,$ and there is a  homomorphism ${\bf g}$ from
	$\mathbb{G}^2_\varepsilon$ into $\mathbb{L}$ which extends $\id_{\mathbb{G}^1_\varepsilon}$:
	$$\xymatrix{
		&&  \mathbb{G}^1_\alpha \ar[r]^{\subseteq}\ar[d]_{=}&\mathbb{G}^2_\alpha\ar[d]^{{\bf g}}\\
		&& \mathbb{G}^1_\alpha \ar[r]^{\subseteq}& \mathbb{L}
		&&&}$$
Without loss of generality we can assume that $\mathbb{L}$
	has size $\lambda$ and that its universe is $\lambda$.
Let $X \subseteq \lambda$ be codes $\langle \bf g, \langle \mathbb{L}_\varepsilon: \varepsilon \leq \lambda \rangle \rangle$.
The
	set
	\begin{center}
		$E:=\{\delta < \lambda: {\bf g} \restriction \delta: \delta \to \delta$, $X \cap \delta$ codes $\langle {\bf g}\restriction \delta, \langle \mathbb{L}_\zeta: \zeta \leq \delta \rangle          \rangle$  and $\mathbb{L} \restriction \delta\subseteq_{group} \mathbb{L} \}$
	\end{center}
	is a club, thus we can find some
$\delta \in E \cap S_\varepsilon$ and some $i$ with $\delta(i)=i$ such that $(M^\varepsilon_i, X^\varepsilon_i) \equiv_{M^\varepsilon_i \cap \lambda} (N_\varepsilon, X)$. It then follows that
$M^\varepsilon_i \cap \mathbb{G}^2_{\varepsilon}= M^\varepsilon_i \cap \mathbb{G}^{2, i}_{\delta, \varepsilon}$, and since $(M^\varepsilon_i, X^\varepsilon_i) \equiv_{M^\varepsilon_i \cap \lambda} (N_\varepsilon, X)$,
we can easily observe that
$X^\varepsilon_i$ codes $$\langle {\bf g} \restriction M^\varepsilon_i \cap \mathbb{G}^{2, i}_{\delta, \varepsilon},      \langle \mathbb{L}_\zeta \cap M^\varepsilon_i: \zeta \leq \lambda \rangle      \rangle.$$ Now by elementarily and the choice of $\bf g$,
\begin{center}
$(M_i^\varepsilon, X^\varepsilon_i)\models$``$X^\varepsilon_i$ codes a homomorphism $f^\varepsilon_{\delta, i}$ from $\mathbb{G}^{2, i}_{\delta, \varepsilon}$.
\end{center}
It follows that $i \in \Sigma_{\delta, \varepsilon},$ and hence by our construction,
there is no $\bold f \in \Hom(\mathbb{G}^{2, i}_{\delta+1, \varepsilon}, \mathbb L)$
such that $\bold f$ extends $f^\varepsilon_{\delta, i} \cup \id_{\mathbb G^{1, i}_{\delta+1, \varepsilon}}$. This property is conveniently summarized by the subjoined  diagram:

	\[
\xymatrix@=3pc{
	& &  \mathbb L\\
	\mathbb G^{1, i}_{\delta+1, \varepsilon} \ar[r]_{\subseteq}\ar@/^/[rru]^{\subseteq} & \mathbb{G}^{2, i}_{\delta+1, \varepsilon} \ar[ru]_{\nexists{\bold f}}   & \\
	\mathbb{G}^{1, i}_{\delta, \varepsilon} \ar [u]^\subseteq \ar[r]_\subseteq & \mathbb{G}^{2, i}_{\delta, \varepsilon}\ar[u] ^\subseteq \ar@/_/[ruu]_{f^\varepsilon_{\delta, i}}  &
}
\]
This is not possible, as $\bf g$ is such an extension. This shows that
there is no  homomorphism ${\bf g}$ as above and the result follows.
	
$(2)$: This is similar to the proof of (1). Take a sequence $\langle  S_\varepsilon: \varepsilon < 2^\lambda    \rangle$
of almost disjoint subsets of $\lambda$ such that each $S_\varepsilon$ is stationary and proceed as before.
\end{proof}

\iffalse
1),2) We use black box, otherwise as above.

Rest: nec?

Note
\mn
\begin{enumerate}
	\item[$\oplus$]  if $L,\alpha,\bar L$ is as in $\boxtimes^1$ and
	$\langle L^*_\varepsilon:\varepsilon < \lambda\rangle$ a filtration of
	$L$ (so $\bar L = L_\zeta:\zeta \le \zeta(*)\rangle,\zeta(*) <
	\lambda^+,L_0$ free, $|L_\zeta| \le \lambda,L_{\zeta +1}/L_\zeta \cong
	G_{\beta_\beta},\beta_\zeta \in \lambda \backslash \{\alpha\})$ then
	$\{\varepsilon < S_\alpha:L/L^*_\varepsilon$ is not $\lambda$-free$\}$
	is not stationary.
\end{enumerate}
\mn
Why help?  As now we can choose the $G_\alpha$ for each $\alpha <
\lambda$ separately (rather than together).  More specifically, to
choose $G_\alpha,\bar G_\alpha$ we shall choose $\langle
(G^1_{\alpha,\varepsilon},G^2_{\alpha,\varepsilon}):\varepsilon <
\lambda\rangle$ such that (for $\iota=1,2$)
\mn
\begin{enumerate}
	\item[$\oplus$]  $(a) \quad \langle
	G^\iota_{\alpha,\varepsilon}:\varepsilon < \lambda\rangle$ is a
	$\subseteq$-increasing continuous sequence of abelian groups of
	cardinality $< \lambda$ with union $G^2_\alpha$
	\sn
	\item[${{}}$]  $(b) \quad
	G^2_{\alpha,\varepsilon}/G^1_{\alpha,\varepsilon}$ is
	$G_{\alpha,\varepsilon}$
	\sn
	\item[${{}}$]  $(c) \quad G^2_\alpha/G^1_\alpha = G_\alpha$.
\end{enumerate}
\mn
The rest should be clear still: here we shall choose
$(G^2_{\alpha,\varepsilon},G^1_{\alpha,\varepsilon})$ by induction on
$\alpha$ so for $\varepsilon \in S_\alpha$, the diamond gives us a
guess and easy to contradict it recalling $\boxplus$ and the remark to it.
\fi

Now, we are ready to prove Theorem (C) from \S1:

\begin{theorem}
	\label{a1z}
Let $\lambda = \lambda^{\aleph_0},\mu = 2^\lambda$ and let $R \subseteq \mu
\times \mu$  be a relation. Then
there are families
$\langle  \mathbb G_\alpha: \alpha < \mu \rangle,$ and  $\langle \mathbb K_\alpha: \alpha <  \mu  \rangle$ of $\aleph_1$-free abelian groups
such that:
\begin{enumerate}
	\item  for all $\alpha < \mu, \mathbb G_\alpha$ has size $\lambda$
	and $\mathbb K_\alpha$ has size $2^\lambda,$
	\item for all $\alpha, \beta < \mu,$
	$$\Ext(\mathbb G_\alpha, \mathbb K_\beta) = 0 \iff \alpha R \beta.$$
\end{enumerate}
\end{theorem}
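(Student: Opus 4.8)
The plan is to derive Theorem~\ref{a1z} as an immediate combination of the ZFC construction of Section~5 and the realization criterion of Section~3, with $\mu_1=\mu_2=\mu=2^\lambda$. First I would apply Theorem~\ref{a52}(2): since $\lambda=\lambda^{\aleph_0}$ and $\mu=2^\lambda$, it produces sequences $\bar{\mathbb{G}}^\iota=\langle\mathbb{G}^\iota_\alpha:\alpha<\mu\rangle$ for $\iota=1,2$ witnessing $\boxtimes^1_{\lambda,\mu}$. Reading off clauses (a)--(c) of $\boxtimes^1_{\lambda,\mu}$, each $\mathbb{G}^1_\alpha\subseteq\mathbb{G}^2_\alpha$ is a free abelian group of cardinality $\lambda$, the quotient $\mathbb{G}_\alpha:=\mathbb{G}^2_\alpha/\mathbb{G}^1_\alpha$ is $\aleph_1$-free of cardinality $\lambda$, and the crucial non-extendability clause (c) holds: whenever $\mathbb{L}$ is constructible by $\{\mathbb{G}_\gamma:\gamma\in\mu\setminus\alpha\}$ over $\mathbb{G}^1_\alpha$, there is no homomorphism $\mathbb{G}^2_\alpha\to\mathbb{L}$ extending $\id_{\mathbb{G}^1_\alpha}$.

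Next I would feed this data into Theorem~\ref{a2}, taking $\mu_1=\mu_2=\mu=2^\lambda$ and the given relation $R\subseteq\mu\times\mu$. The only hypothesis of Theorem~\ref{a2} that is not literally the output of the previous step is the cardinal-arithmetic requirement $\cf(\mu_2)>\lambda$; but $\mu_2=2^\lambda$ and by K\"onig's theorem $\cf(2^\lambda)>\lambda$, so this holds automatically. Thus the hypotheses $\boxtimes^1_{\lambda,\mu}$ of Theorem~\ref{a2} are satisfied, and the theorem delivers a sequence $\bar{\mathbb{K}}=\langle\mathbb{K}_\beta:\beta<\mu\rangle$ satisfying $\boxtimes^2_{\lambda,\mu}$. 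Conclusion $(\alpha)$ gives that each $\mathbb{K}_\beta$ is $\aleph_1$-free of cardinality $2^\lambda$, while conclusion $(\beta)$ is precisely $\Ext(\mathbb{G}_\alpha,\mathbb{K}_\beta)=0\iff\alpha R\beta$. Together with the $\mathbb{G}_\alpha$ from the first step, the pair of families $\langle\mathbb{G}_\alpha:\alpha<\mu\rangle$ and $\langle\mathbb{K}_\alpha:\alpha<\mu\rangle$ is exactly what is required: clause (1) records the sizes $|\mathbb{G}_\alpha|=\lambda$ and $|\mathbb{K}_\alpha|=2^\lambda$, and clause (2) is conclusion $(\beta)$.

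There is essentially no obstacle left at this level, since all of the genuine work has already been done. The $\aleph_1$-freeness and the non-extendability clause $\boxtimes^1_{\lambda,\mu}(c)$ are produced by the black box machinery of Theorem~\ref{a52} (where Lemma~\ref{nonext} and the pure-closure amalgamation are the heart of the matter), and the passage from non-extendability to the exact vanishing pattern of $\Ext$ is carried out in the pushout argument of Theorem~\ref{a2}. The present theorem is the packaging step: one simply checks that the indexing $\mu_1=\mu_2=2^\lambda$ is admissible --- which reduces to the trivial cofinality computation $\cf(2^\lambda)>\lambda$ --- and then quotes the two results in sequence. Note in particular that, unlike Theorem~\ref{a1}, no appeal to $\diamondsuit_S$ is made, so the conclusion holds outright in ZFC, at the cost of only $\aleph_1$-freeness rather than $\lambda$-freeness of the $\mathbb{G}_\alpha$.
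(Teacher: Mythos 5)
Your proposal is correct and is essentially identical to the paper's own proof: Theorem \ref{a1z} is obtained there exactly by combining Theorem \ref{a52} (which supplies $\boxtimes^1_{\lambda,\mu}$ in ZFC via the black box) with the realization criterion of Theorem \ref{a2}, applied with $\mu_1=\mu_2=2^\lambda$. Your only addition is to spell out the admissibility check $\cf(2^\lambda)>\lambda$ (K\"onig), which the paper leaves implicit.
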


\begin{proof}
	This follows using Theorem \ref{a2} and Theorem \ref{a52}.
\end{proof}	
Let us close this section by showing the relation between
Theorem \ref{a1z} and the result
of G\"obel,
Shelah and Wallutis \cite{GSW} stated in the introduction.
\begin{lemma}
\label{our1}
Let $\mu$ be an infinite cardinal. Then (1) implies (2), where:
\begin{enumerate}
\item  There are  abelian groups
$\langle \mathbb G_X ,\mathbb H^X: X \subseteq \mu \rangle$ such that
  for all $X, Y \subseteq \mu,$
 $$\Ext(\mathbb G_Y, \mathbb H^X) = 0 \iff Y \subseteq X.$$

\item If $(\mu, R)$ is a bipartite graph, then there are families
$\langle  \mathbb G_\alpha: \alpha < \mu \rangle,$ and  $\langle \mathbb K_\alpha: \alpha <  \mu  \rangle$ of  abelian groups
such that
for all $\alpha, \beta < \mu,$
	$$\Ext(\mathbb G_\alpha, \mathbb K_\beta) = 0 \iff \alpha R \beta.$$
\end{enumerate}
\end{lemma}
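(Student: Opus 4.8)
The plan is to read off the two families required by (2) directly from the groups furnished by hypothesis (1), encoding the relation $R$ as an inclusion between singletons and in-neighbourhoods. The key point is that (1) realizes the whole inclusion order on $\cP(\mu)$ through the vanishing of $\Ext$, and any relation on $\mu$ can be expressed in terms of such inclusions. No new algebra is needed; the construction is purely a matter of choosing the right subsets of $\mu$ to feed into (1).

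Concretely, for each $\alpha<\mu$ I would take the singleton $\{\alpha\}\subseteq\mu$ and set $\mathbb{G}_\alpha:=\mathbb{G}_{\{\alpha\}}$, the group attached by (1) to $\{\alpha\}$. For each $\beta<\mu$, let $X_\beta:=\{\gamma<\mu:\gamma R\beta\}$ be the in-neighbourhood of $\beta$, and set $\mathbb{K}_\beta:=\mathbb{H}^{X_\beta}$. Applying the equivalence in (1) with $Y=\{\alpha\}$ and $X=X_\beta$ then gives
$$\Ext(\mathbb{G}_\alpha,\mathbb{K}_\beta)=\Ext(\mathbb{G}_{\{\alpha\}},\mathbb{H}^{X_\beta})=0\iff\{\alpha\}\subseteq X_\beta\iff\alpha\in X_\beta\iff\alpha R\beta,$$
which is exactly the conclusion of (2).

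I expect no genuine obstacle here: the entire weight of the argument is carried by hypothesis (1), which is the theorem of G\"obel--Shelah--Wallutis, and the remaining step is combinatorial bookkeeping. The only care needed is to respect the asymmetry of (1), in which $\mathbb{G}$ occupies the first $\Ext$-argument and $\mathbb{H}$ the second; this dictates that the singletons be assigned to the $\mathbb{G}$-family and the neighbourhood sets to the $\mathbb{K}$-family, since a symmetric choice (singletons on both sides) would fail to record the edges of $R$. It is worth noting that the construction never actually invokes the bipartiteness of $(\mu,R)$: it realizes an \emph{arbitrary} $R\subseteq\mu\times\mu$ as the cross-$\Ext$ relation of the two families, the bipartite structure being precisely the splitting of the vertices into the $\mathbb{G}$-side and the $\mathbb{K}$-side.
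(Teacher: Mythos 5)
Your proposal is correct and is essentially identical to the paper's own proof: both take $\mathbb G_\alpha:=\mathbb G_{\{\alpha\}}$ and $\mathbb K_\beta:=\mathbb H^{X_\beta}$ with $X_\beta=\{\gamma<\mu:\gamma R\beta\}$, and then invoke the equivalence in (1). Your closing observation that bipartiteness is never actually used (the construction handles an arbitrary $R\subseteq\mu\times\mu$ as a cross-$\Ext$ relation between the two families) is also accurate and consistent with how the paper treats this lemma.
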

\begin{proof}
Suppose (1) holds as witnessed by the sequence  $\langle \mathbb G_X ,\mathbb H^X: X \subseteq \mu \rangle$ and let $(\mu, R)$ be a bipartite graph. Given any $\alpha, \beta < \mu$
set $Y_{\alpha}=\{\alpha\}$ and
$X_\beta=\{ \alpha < \mu: \alpha R \beta         \}$.
Now define

\begin{enumerate}
	\item[$i)$]  $\mathbb G_\alpha:=\mathbb G_{Y_\alpha}$ and
	\item[$ii)$]   $\mathbb K_\alpha:= \mathbb H^{X_\alpha}$.
\end{enumerate}
This is now straightforward to see
\[
\Ext(\mathbb G_\alpha, \mathbb K_\beta) =0 \iff Y_\alpha \subseteq X_\beta \iff  \alpha R \beta.
\]
Thus, the family $\langle  \mathbb G_\alpha, \mathbb K_\alpha: \alpha < \mu \rangle$
is as required.
\end{proof}
The converse of the above lemma also holds in the following sense:
\begin{lemma}
\label{our2}
Let $\mu$ be an infinite cardinal and set $\lambda=2^\mu$. Then (1) implies (2), where:
\begin{enumerate}
\item If  $(\lambda, R)$ is a bipartite graph, then there are families
$\langle  \mathbb G_\alpha: \alpha < \lambda \rangle,$ and  $\langle \mathbb K_\alpha: \alpha <  \lambda  \rangle$ of  abelian groups
such that
for all $\alpha, \beta < \lambda,$
	$$\Ext(\mathbb G_\alpha, \mathbb K_\beta) = 0 \iff \alpha R \beta.$$

\item  There are  abelian groups
$\langle \mathbb G_X ,\mathbb H^X: X \subseteq \mu \rangle$ such that
  for all $X, Y \subseteq \mu,$
 $$\Ext(\mathbb G_Y, \mathbb H^X) = 0 \iff Y \subseteq X.$$
\end{enumerate}
\end{lemma}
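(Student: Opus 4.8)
The plan is to run the relabeling argument of Lemma \ref{our1} in reverse, the only genuine input being the cardinal arithmetic $|\mathcal{P}(\mu)| = 2^\mu = \lambda$, which lets us index the subsets of $\mu$ by ordinals $< \lambda$. The one point requiring care is that the containment relation $\subseteq$ on $\mathcal{P}(\mu)$ is reflexive and transitive, hence is \emph{not} itself the edge relation of a bipartite graph; so I cannot simply transport $\subseteq$ along a single bijection $\mathcal{P}(\mu)\to\lambda$ and feed it to hypothesis (1). Instead I will record the ``$Y$-side'' and the ``$X$-side'' on two disjoint copies of $\mathcal{P}(\mu)$, which forces every edge to cross between the two copies.

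Concretely, I would first put $V_1 := \{1\}\times\mathcal{P}(\mu)$ and $V_2 := \{2\}\times\mathcal{P}(\mu)$, so that $V_1\cap V_2 = \emptyset$ and $|V_1\cup V_2| = \lambda + \lambda = \lambda$ since $\lambda$ is infinite. Fix a bijection $\theta\colon V_1\cup V_2 \to \lambda$ and define a relation $R\subseteq\lambda\times\lambda$ by declaring, for $Y,X\subseteq\mu$,
$$\theta(1,Y)\mathrel{R}\theta(2,X) \iff Y\subseteq X,$$
and putting no other pairs into $R$. Then every edge of $(\lambda,R)$ runs from $\theta[V_1]$ to $\theta[V_2]$, so $(\lambda,R)$ is a bipartite graph in the sense of the paper, with parts $\theta[V_1]$ and $\theta[V_2]$.

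Now I would invoke hypothesis (1) for this bipartite graph to obtain families $\langle\mathbb{G}_\alpha:\alpha<\lambda\rangle$ and $\langle\mathbb{K}_\alpha:\alpha<\lambda\rangle$ with $\Ext(\mathbb{G}_\alpha,\mathbb{K}_\beta)=0 \iff \alpha\mathrel{R}\beta$, and finally relabel by setting $\mathbb{G}_Y := \mathbb{G}_{\theta(1,Y)}$ and $\mathbb{H}^X := \mathbb{K}_{\theta(2,X)}$ for $Y,X\subseteq\mu$. Chasing the definitions then gives
$$\Ext(\mathbb{G}_Y,\mathbb{H}^X) = \Ext(\mathbb{G}_{\theta(1,Y)},\mathbb{K}_{\theta(2,X)}) = 0 \iff \theta(1,Y)\mathrel{R}\theta(2,X) \iff Y\subseteq X,$$
which is exactly assertion (2). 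The argument is entirely formal once the bipartite graph is in place, and I do not expect any serious obstacle; the only thing to watch is the bookkeeping of the previous paragraph, namely keeping the two copies of $\mathcal{P}(\mu)$ genuinely disjoint. This is precisely what turns the reflexive instances $Y\subseteq Y$ into honest edges $\theta(1,Y)\mathrel{R}\theta(2,Y)$ between \emph{distinct} vertices of opposite parts, rather than self-loops, so that bipartiteness is preserved while the containment relation is still faithfully encoded for all pairs $Y,X$.
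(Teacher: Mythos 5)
Your proof is correct, and it follows essentially the same route as the paper: use $|\mathcal{P}(\mu)|=2^\mu=\lambda$ to transport the containment relation to a graph on $\lambda$, invoke hypothesis (1), and relabel the resulting families by subsets of $\mu$. The one place you diverge is actually a point where you are more careful than the paper. The paper takes two enumerations $\langle X_\alpha:\alpha<\lambda\rangle$, $\langle Y_\beta:\beta<\lambda\rangle$ of $\mathcal{P}(\mu)$ on the \emph{same} vertex set $\lambda$ and defines $\alpha R\beta \iff Y_\alpha\subseteq X_\beta$; read against the paper's own definition of bipartite, this relation is not literally bipartite (for instance, any $\alpha$ with $Y_\alpha=\emptyset$ satisfies $\alpha R\alpha$, a self-loop, so no partition of $\lambda$ into two parts can separate all edges). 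Your device of placing the $Y$-side and the $X$-side on two disjoint copies $\{1\}\times\mathcal{P}(\mu)$ and $\{2\}\times\mathcal{P}(\mu)$ and defining $R$ only across the copies produces a graph that is honestly bipartite, so hypothesis (1) applies as stated. The paper's argument is harmless in spirit --- since the $\mathbb{G}$-family and $\mathbb{K}$-family are separate, any relation $R\subseteq\lambda\times\lambda$ is ``bipartite'' in the operative sense, and indeed Theorem \ref{a1z} never uses bipartiteness --- but your version is the one that matches the lemma's hypothesis verbatim.
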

\begin{proof}
Suppose (1) holds. Let $\langle  X_\alpha: \alpha < \lambda        \rangle$
and $\langle Y_\beta: \beta < \lambda   \rangle$
be two enumerations of $\mathcal{P}(\mu)$. Let us define the bipartite graph $(\lambda, R)$
as
\[
\alpha R \beta \iff Y_\alpha \subseteq X_\beta.
\]
By (1), there exists a family $\langle  \mathbb G_\alpha, \mathbb K_\alpha: \alpha < \lambda \rangle,$  of  abelian groups
such that
for all $\alpha, \beta < \lambda,$
	$$\Ext(\mathbb G_\alpha, \mathbb K_\beta) = 0 \iff \alpha R \beta.$$
For $X, Y \subseteq \mu$, let $\alpha, \beta < \lambda$ be such that $X_\beta=X$ and $Y_\alpha=Y$ and set

\begin{enumerate}
	\item[$\bullet$]  $\mathbb G_{Y}:=\mathbb G_\alpha$;
	\item[$\bullet$]   $\mathbb H^X := \mathbb K_\beta$.
\end{enumerate}

Then
\[
\Ext(\mathbb G_Y, \mathbb H^X) = 0 \iff \Ext(\mathbb G_\alpha, \mathbb K_\beta) = 0 \iff \alpha R \beta \iff Y \subseteq X.
\]
The lemma follows.
\end{proof}

\section{Representing  a general graph by Ext}

In this section we consider general graphs and discuss if they can be represented by Ext as before. We do not know the result in ZFC, but we show the following consistency result which shows that there are no restrictions on such graphs in ZFC, as promised by Theorem (D) from the introduction:

\begin{theorem}
	\label{arbitrarygraphZFC}
	Suppose GCH holds and the pair $(S, R)$ is a graph  where $R \subseteq S \times S$, and  let $\lambda>|S|$ be an uncountable regular cardinal. Then there exists a cardinal preserving generic extension
	of the universe, and there is a family $\{\mathbb{G}_s\}_{s \in S}$ of $\lambda$-free abelian groups  such that
	\[
	\Ext(\mathbb{G}_s, \mathbb{G}_t)=0 \iff s R t.
	\]
\end{theorem}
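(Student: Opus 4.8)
The plan is to construct the family $\{\mathbb{G}_s : s \in S\}$ by forcing, presenting each $\mathbb{G}_s$ simultaneously as a source and a target for $\Ext$. Concretely, I would arrange that every $\mathbb{G}_s$ carries a presentation $\mathbb{G}_s = \mathbb{G}^2_s/\mathbb{G}^1_s$ with $\mathbb{G}^1_s \subseteq \mathbb{G}^2_s$ free of cardinality $\lambda$, so that applying $\Hom(-,\mathbb{G}_t)$ to the short exact sequence $0 \to \mathbb{G}^1_s \to \mathbb{G}^2_s \to \mathbb{G}_s \to 0$ and using freeness of $\mathbb{G}^2_s$ yields $\Ext(\mathbb{G}_s,\mathbb{G}_t)=0$ exactly when every $h \in \Hom(\mathbb{G}^1_s,\mathbb{G}_t)$ extends to $\mathbb{G}^2_s$. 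This reduces the theorem to producing one system in which, for edges, all such $h$ extend, and, for non-edges, a distinguished $h$ does not; this is precisely the general-graph analogue of the source condition $\boxtimes^1_{\lambda,\mu_1}(c)$ of Theorem \ref{a2}, now demanded symmetrically across all vertices at once, which is why the bipartite separation into a source family and a target family is no longer available.

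The forcing $\mathbb{P}$ would take as conditions the approximations of size $<\lambda$ to the whole system $\langle \mathbb{G}^1_s, \mathbb{G}^2_s : s \in S\rangle$ together with the pushout data witnessing extensions along edges, ordered by end-extension. Using a fixed family $\langle S_s : s \in S\rangle$ of pairwise disjoint non-reflecting stationary subsets of $S^\lambda_{\aleph_0}$ (available since $\lambda > |S|$ is regular and GCH holds), I would build along $\lambda$ so that at stages in $S_s$ a generic obstruction is planted inside $\mathbb{G}_s$ --- the analogue of the ${}^\omega 2$ diagonalization of Lemma \ref{nonext} --- while at stages coding an edge $sRt$ the pushout $\mathbb{G}^2_s \oplus_{\mathbb{G}^1_s} (\cdot)$ construction from the proof of Theorem \ref{a2} is carried out inside $\mathbb{G}_t$. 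I would then establish cardinal preservation: $\mathbb{P}$ is $<\lambda$-closed, so it adds no new $<\lambda$ sequences and preserves all cardinals and cofinalities $\le\lambda$; and, since GCH gives $\lambda^{<\lambda}=\lambda$, a bookkeeping and $\Delta$-system count shows $\mathbb{P}$ has the $\lambda^+$-cc, hence preserves all cardinals $\ge\lambda^+$. Thus the extension is cardinal preserving.

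In the generic extension I would set $\mathbb{G}^\iota_s$ to be the union of the approximations and $\mathbb{G}_s = \mathbb{G}^2_s/\mathbb{G}^1_s$. Genericity forces each group to have size $\lambda$, and the freeness clauses of the construction --- the successor quotients $\mathbb{G}^2_{s,\alpha+1}/\mathbb{G}^2_{s,\alpha}$ free off a non-stationary set, exactly as engineered in the proof of Theorem \ref{underdiamond} --- give $\lambda$-freeness through Lemma \ref{comch} and the $\Gamma$-invariant. For an edge $sRt$, a density argument shows that every $h\in\Hom(\mathbb{G}^1_s,\mathbb{G}_t)$, being absorbed at some stage $<\lambda$, is extended by the generically added pushout map, so $\Ext(\mathbb{G}_s,\mathbb{G}_t)=0$ via the long exact sequence above. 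For a non-edge, the canonical inclusion of a free direct summand $\mathbb{G}^1_s \hookrightarrow \mathbb{G}_t$ (chosen as in the base term of the target construction) is shown not to extend, giving $\Ext(\mathbb{G}_s,\mathbb{G}_t)\neq 0$.

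The main obstacle is the non-edge direction coupled with the simultaneity. Because each $\mathbb{G}_t$ is enlarged repeatedly to absorb edge-maps, I must guarantee that none of these enlargements accidentally furnishes a splitting for a non-edge extension into $\mathbb{G}_t$; this is exactly the delicate point encapsulated by clause $\boxtimes^1_{\lambda,\mu_1}(c)$ and by Lemma \ref{nonext}, namely that any $\mathbb{L}$ constructible from the other groups over $\mathbb{G}^1_s$ must fail to admit an extension of $\id_{\mathbb{G}^1_s}$. I would handle it by a genericity and density argument --- replacing the black box of Theorem \ref{a52} and the diamond of Theorem \ref{underdiamond} --- showing that, for club-many $\delta$, the obstruction planted at the stages in $S_s$ is unpredictable to every name for such a splitting, so that no condition can force a non-edge $\Ext$ to vanish. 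The non-reflection of the $S_s$ and the $<\lambda$-closure of $\mathbb{P}$ are what let this diagonalization run uniformly over all vertices at once, which is the feature that the ZFC arguments of Sections 4 and 5 could supply only for bipartite graphs.
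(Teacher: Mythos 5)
Your proposal has a genuine gap in the edge direction, i.e.\ in forcing $\Ext(\mathbb{G}_s,\mathbb{G}_t)=0$ when $sRt$. You reduce this, via the criterion behind Theorem \ref{a2}, to showing that every $h\in\Hom(\mathbb{G}^1_s,\mathbb{G}_t)$ extends to $\mathbb{G}^2_s$, and you claim a density argument works because each such $h$ is ``absorbed at some stage $<\lambda$'' of your single $\lambda$-length forcing. This absorption claim is false. In Theorem \ref{a2} it is bought by the hypothesis $\cf(\mu_2)>\lambda$: there the targets $\mathbb{K}_\beta$ have size $2^\lambda$ and are unions of chains of length $2^\lambda$, so a homomorphism from a source of size $\lambda$ necessarily has bounded range. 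In your setting both $\mathbb{G}^1_s$ and $\mathbb{G}_t$ have size $\lambda$ and are built in $\lambda$ steps, so a homomorphism $h\colon\mathbb{G}^1_s\to\mathbb{G}_t$ is itself an object of full size $\lambda$; it need not factor through any bounded stage, and worse, the forcing keeps adding new such $h$ (they exist only as names determined by the entire generic). A condition of size $<\lambda$ cannot meet a dense set that handles a name $\dot h$ of size $\lambda$, so no bookkeeping inside a single forcing of length $\lambda$ can split all extensions. This is precisely the asymmetry that makes the general (non-bipartite) case hard: when every vertex is simultaneously source and target, one cannot arrange $|\mathbb{G}_s|<\cf(\text{length of the construction of }\mathbb{G}_t)$ for all pairs at once.

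The paper resolves this with a two-step forcing rather than one. First, $\bbP_*$ (Definition \ref{pstarforcing}) adds the $\lambda$-free groups $\mathbb{G}_s$ of size $\lambda$ \emph{together with} distinguished non-split exact sequences $\mathbf x_{s,t}$ for every non-edge; this gives $\Ext(\mathbb{G}_s,\mathbb{G}_t)\neq 0$ for $(s,t)\notin R$ outright. Then, in $V[\mathbf G_*]$, a $\lambda$-support iteration of length $\lambda^+$ is run, where each step $\bbQ^W_{s,t,\mathbf x}$ (Definition \ref{buildingblockforcing}) generically adds a splitting of one bookkept named extension for an edge pair; the $\lambda^+$-c.c.\ of the iteration (Lemma \ref{s12}) and the bookkeeping function $\Phi$ guarantee that every extension appearing in the final model is caught at some stage $<\lambda^+$ (Lemma \ref{whensRt}). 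The delicate point you would still have to face even with this structure is preservation: the iteration that splits edge extensions must not accidentally split the non-edge sequences $\mathbf x_{s,t}$. The paper proves this (Claim \ref{splitcontra} and Lemma \ref{snotRtcase}) by showing that any condition deciding a putative splitting on an initial segment can be extended, via the $n!$-divisibility-chain construction (the same mechanism as your ``planted obstruction''), to a condition forcing that the splitting cannot continue. Your non-edge sketch is in the right spirit, but without the $\lambda^+$-iteration your architecture cannot deliver the edge direction, and with it, your preservation argument must be redone relative to the iteration, not just the group construction.
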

The rest of this section is devoted to the proof of the above theorem. Before we go into the details, let us sketch the idea of the proof:
\begin{discussion}\label{dis}
	We first define a forcing notion $\bbP_{*}$ which adds a sequence $\langle \mathbb{G}_s: s \in S  \rangle$
	of  $\lambda$-free abelian groups of size $\lambda$ such that for each $s, t \in S$ if $(s, t) \notin R,$ then for some abelian group $\mathbb{H}_{s, t}$
	of size $\lambda$ there exists an exact sequence
	$$0\longrightarrow \mathbb{G}_s \longrightarrow \mathbb{H}_{s, t} \longrightarrow \mathbb{G}_t\longrightarrow 0$$
	which does not split. We apply this along with Fact \ref{yoneda} to deduce that
	$\Ext(\mathbb{G}_s, \mathbb{G}_t) \neq 0.$
	Then working in the generic extension by $\bbP_*,$ we define a cardinal preserving $\lambda$-support iteration forcing notion of length $\lambda^+$, which makes
	$\Ext(\mathbb{G}_s, \mathbb{G}_t)=0$ for all $s, t\in S$ with $s R t.$ This is done by adding a splitter for any
	exact sequence
	$$0\longrightarrow \mathbb{G}_s \longrightarrow \mathbb{H} \longrightarrow \mathbb{G}_t\longrightarrow 0,$$
	where $\mathbb{H}$ is an abelian group of size $\lambda.$ By using a suitable book-keeping argument, we make sure that at the end
	all such exact sequences are considered for all pairs $(s, t)  \in R.$ We also show that the exact sequence
	$$0\longrightarrow \mathbb{G}_s \longrightarrow \mathbb{H}_{s, t} \longrightarrow \mathbb{G}_t\longrightarrow 0$$
	still fails to split after the iteration, which will complete the proof.
\end{discussion}

Let us now go into the details of the proof.
Recall from Notation \ref{hchi} that $\cH(\chi)$ is the collection of sets of hereditary cardinality
less than $\chi$.

\begin{notation}
	Let $\Phi: \lambda^+ \to \cH(\lambda^+)$ be such that  $\Phi^{-1}[x] \subseteq \lambda^+$
	is unbounded for all $x \in \cH(\lambda^+)$.
\end{notation}

The  existence of $\Phi $ follows by the GCH assumption. We will use $\Phi$ as our book-keeping function.
Here, we define the forcing notion $\bbP_*.$
\begin{definition}  Let $\lambda = \text{ cf}(\lambda) > \aleph_0$
	be an uncountable regular cardinal, and  let $S$  be a set of cardinality $<\lambda$.
	Finally, let	$R \subseteq S \times S$.
	\label{pstarforcing}
	\begin{itemize}
		\item[(a)] The forcing notion $\bbP_*$ consists of conditions
		\[
		p=\langle  \langle\mathbb{G}^p_{s, \beta}: s \in S, \beta \leq \alpha_p \rangle,  E_p, \langle \mathbf x^p_{s,t,\beta}: (s, t) \notin R, \beta \leq \alpha_p             \rangle\rangle,
		\]
		where
		\begin{enumerate}
			\item $\alpha_p < \lambda$ is an ordinal,
			
			\item for each $s \in S,$ $\langle \mathbb{G}^p_{s,\beta}: \beta \leq \alpha_p\rangle$ is an increasing
			and continuous
			sequence of free Abelian groups from $\cH(\lambda)$,
			
			\item $E_p \subseteq (\alpha_p +1) \cap
			S^\lambda_{\aleph_0}$ does not reflect,
			
			\item  $ \mathbb{G}^p_{s, \beta}/\mathbb{G}^p_{s, \gamma}$ is free when $\gamma
			< \beta < \alpha_p,\gamma \notin E_p,$
			
			\item  if $(s,t) \notin R$,
			then
			$$
			\begin{CD}
			\mathbf x^p_{s,t,\beta} :=0@>>> \mathbb{G}^p_{t,\beta} @>f^p_{s,t, \beta}>>\mathbb{H}^p_{s,t, \beta} @>g^p_{s, t, \beta}>> \mathbb{G}^p_{s,\beta} @>>> 0
			\end{CD}
			$$
			is an exact sequence, all
			increasing with $\beta \le \alpha_p$, from $\cH(\lambda)$:
			$$
			\begin{CD}
			%@.\vdots@.\vdots@.\vdots\\
			%@.\subseteq@AAA\subseteq @AAA \subseteq @AAA   \\
			\mathbf x^p_{s, t, \alpha_p}:= 0@>>> \mathbb{G}^p_{t,\alpha_p} @>f^p_{s,t, \alpha_p}>>\mathbb{H}^p_{s,t, \alpha_p} @>g^p_{s, t, \alpha_p}>> \mathbb{G}^p_{s,\alpha_p} @>>> 0\\
			@. \subseteq @AAA \subseteq@AAA \subseteq @AAA   \\
			@.\vdots@.\vdots@.\vdots\\
			@.\subseteq @AAA\subseteq@AAA \subseteq @AAA   \\
			\mathbf x^p_{s, t, 1}:=0@>>> \mathbb{G}^p_{t,1} @>f^p_{s,t, 1}>>\mathbb{H}^p_{s,t, 1} @>g^p_{s, t, 1}>> \mathbb{G}^p_{s,1} @>>> 0\\
			@.\subseteq@AAA\subseteq @AAA \subseteq @AAA   \\
			\mathbf x^p_{s, t, 0}:=0@>>> \mathbb{G}^p_{t,0} @>f^p_{s,t,0 }>>\mathbb{H}^p_{s,t, 0} @>g^p_{s, t, 0}>> \mathbb{G}^p_{s,0} @>>> 0.\\
			\end{CD}
			$$
		\end{enumerate}
		\item[(b)] Given $p, q \in \bbP_*,$ let $p \leq q$ ($q$ is stronger than $p$) when
		\begin{enumerate}
			\item $\alpha_p \leq \alpha_q,$
			
			\item for all $s \in S$ and $\beta \leq \alpha_p,$ $\mathbb{G}^q_{s, \beta}=\mathbb{G}^p_{s, \beta}$,
			
			\item $E^q \cap (\alpha_p+1)=E^p$,
			
			\item If $(s, t) \notin R$ and $\beta \leq \alpha_p$, then $\mathbf x^q_{s,t,\beta}=\mathbf x^p_{s,t,\beta}.$
		\end{enumerate}
	\end{itemize}
\end{definition}

For simplicity of the reader we recall:

\begin{definition}Let $\bbP$ be a forcing notion and $\kappa$ be a regular cardinal.	\begin{enumerate}\item[(a)]  $\bbP$ is called
		$\kappa$-distributive if the intersection of less than $\kappa$-many dense open subsets of $\bbP$ is dense, or equivalently, forcing with
		$\bbP$ does not add any new sequences of ordinals of length less than $\kappa.$
		\item[(b)] $\bbP$ is called $\kappa$-strategically closed if for every $\alpha < \kappa$,  player II has a winning strategy in the following game:
		\begin{itemize}
			\item[$\Game_{\bbP}(\alpha)$]: the game has length $\alpha$ in which the players I and II take turns to play conditions from $\bbP$ for $\alpha$-many moves, with player I playing at odd stages and player II at even stages (including all limit stages). II must play $1_{\bbP}$ at move zero. Let $p_\beta$ b the condition played at move $\beta$; the player who played $p_\beta$ loses immediately unless $p_\beta \geq p_\gamma$ for all $\gamma < \beta.$
			If neither player loses at any stage $\beta < \alpha,$ then player II wins.
		\end{itemize}
		
		\item[(c)]$\bbP$ is called $\kappa$-c.c. if
		any antichain $A \subseteq \bbP$ has size less than $\kappa.$	\end{enumerate}
\end{definition}
It is evident that if $\bbP$ is $\kappa$-strategically closed, then it is $\kappa$-distributive.

\begin{lemma}
	\label{pstarpreservescardinals}
	Adopt the above notation. Then	$\bbP_*$ is $\lambda$-distributive and $\lambda^+$-c.c. In particular, forcing with $\bbP_*$ does not add any new sequences of ordinals
	of size less than $\lambda$ and it preserves all cardinals.
\end{lemma}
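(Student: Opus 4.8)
The plan is to establish the two properties separately, noting that by the remark preceding the lemma it suffices to prove $\lambda$\emph{-strategic closure} in place of $\lambda$-distributivity.

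I will first dispose of the $\lambda^+$-c.c.\ by a crude counting argument. Since $|S| < \lambda$ and $\alpha_p < \lambda$, each condition $p$ is a structure indexed by a set of size $<\lambda$ whose entries are free abelian groups (and exact sequences) drawn from $\cH(\lambda)$, each of size $<\lambda$. Consequently $p$ is coded by a single element of $\cH(\lambda)$, so $|\bbP_*| \le |\cH(\lambda)| = 2^{<\lambda}$. Under GCH one has $2^{<\lambda} = \lambda$ for the regular cardinal $\lambda$, whence $|\bbP_*| \le \lambda$ and in particular every antichain has size $\le \lambda < \lambda^+$. This gives the $\lambda^+$-c.c.

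For $\lambda$-strategic closure I will exhibit a winning strategy for player II in $\Game_{\bbP_*}(\alpha)$ for each $\alpha<\lambda$. At successor (even) stages II simply extends the condition just played by I, taking care to (i) keep the new top ordinal \textbf{out} of $E$, and (ii) extend each $\mathbb{G}_{s,\beta}$ by a free summand so that the new top quotient $\mathbb{G}^{\text{new}}_{s}/\mathbb{G}^{\text{old}}_{s}$ is free, while forming the new $\mathbb{H}^{}_{s,t,\cdot}$ by pushout/extension so that exactness of $\mathbf x^{}_{s,t,\cdot}$ persists; clause (4) of Definition \ref{pstarforcing} is respected because the witnessing lower index is not in $E$. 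The only genuine content is at limit stages, all of which belong to II.

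The \textbf{main obstacle}, and the reason strategic (rather than $<\lambda$-) closure is what one proves, is the freeness of the limit groups. At a limit stage $i$ of the game, set $\delta^\ast=\sup_{j<i}\alpha_{p_j}<\lambda$ (as $\lambda$ is regular) and form the continuous unions $\mathbb{G}_{s,\delta^\ast}=\bigcup_{j<i}\mathbb{G}^{p_j}_{s,\cdot}$ and $\mathbb{H}_{s,t,\delta^\ast}=\bigcup_{j<i}\mathbb{H}^{p_j}_{s,t,\cdot}$; exactness of $\mathbf x_{s,t,\delta^\ast}$ is automatic since a direct limit of short exact sequences is exact, and a countable or uncountable union of a non-reflecting $E$ stays non-reflecting. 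What must be checked is that each $\mathbb{G}_{s,\delta^\ast}$ is \emph{free}, as required by Definition \ref{pstarforcing}(2). If $\cf(\delta^\ast)>\aleph_0$, then $\delta^\ast\notin S^\lambda_{\aleph_0}$, the set $E\cap\delta^\ast$ is non-stationary in $\delta^\ast$ by non-reflection, and the filtration has free quotients off $E$; the freeness criterion of Lemma \ref{comch} (relativised to $\delta^\ast$, i.e.\ the standard Eklof--Mekler union lemma from \cite[Ch.~IV]{EM02}) yields that $\mathbb{G}_{s,\delta^\ast}$ is free. If $\cf(\delta^\ast)=\aleph_0$, one uses that II's successor-stage tops were kept out of $E$: along the cofinal $\omega$-sequence of those tops the successive quotients are free, so $\mathbb{G}_{s,\delta^\ast}$ is a countable direct sum $\mathbb{G}_{s}^{(0)}\oplus\bigoplus_{n<\omega}F_n$ of free groups and hence free, and $\delta^\ast\in S^\lambda_{\aleph_0}$ may be added to $E$ to preserve non-reflection. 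In either case II can legitimately play the union condition, so II never loses at a limit and wins the game.

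Thus $\bbP_*$ is $\lambda$-strategically closed, hence $\lambda$-distributive, so it adds no new $<\lambda$-sequences of ordinals and preserves all cardinals $\le\lambda$; combined with the $\lambda^+$-c.c., which preserves cardinals $\ge\lambda^+$, all cardinals are preserved, as claimed.
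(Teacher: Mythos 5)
Your counting argument for the $\lambda^{+}$-c.c.\ and your handling of successor stages and of limit stages of countable cofinality are fine (and close to what the paper does). The genuine gap is at limit stages of \emph{uncountable} cofinality, and it has two parts. First, the principle you invoke there --- ``a countable or uncountable union of a non-reflecting $E$ stays non-reflecting'' --- is false, so ``$E\cap\delta^{\ast}$ is non-stationary in $\delta^{\ast}$ by non-reflection'' is not available: non-reflection of the union is precisely clause (3) of Definition \ref{pstarforcing} for the limit condition, i.e.\ the thing to be proved. Indeed, let $\delta^{\ast}$ have cofinality $\omega_{1}$ and let $\langle\delta_{i}:i<\omega_{1}\rangle$ be increasing, continuous and cofinal in $\delta^{\ast}$ with each $\delta_{i+1}$ of cofinality $\omega$; then every proper initial segment is non-reflecting (all of its limit points have countable cofinality), yet the union is a club of $\delta^{\ast}$ contained in $S^{\lambda}_{\aleph_{0}}$, hence stationary in $\delta^{\ast}$. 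Second, your own strategy realizes exactly this counterexample: at each limit stage $j$ of countable cofinality you put the top $\alpha_{p_{j}}$ into $E$. In a game of length $\omega_{1}+1$ (available once $\lambda\geq\aleph_{2}$), the map $j\mapsto\alpha_{p_{j}}$ is increasing and continuous at limits, so $\{\alpha_{p_{j}}:j<\omega_{1}\mbox{ limit},\ \cf(j)=\omega\}$ is a stationary subset of $\delta^{\ast}=\alpha_{p_{\omega_{1}}}$ lying entirely inside $E_{p_{\omega_{1}}}$. At stage $\omega_{1}$ the proposed union violates clause (3), and at the same time no club of $\delta^{\ast}$ avoiding $E$ exists along which to prove clause (2) (freeness of the top groups); since the coherence clause of the order freezes $E$ below old tops, this damage cannot be repaired later, so player II loses.

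The missing device is the one the paper's proof of distributivity carries through its induction: alongside the conditions one maintains a coherent increasing sequence of closed sets $C_{\xi}\subseteq\alpha_{p_{\xi}}+1$, unbounded below the top and disjoint from $E_{p_{\xi}}$. In your game formulation this amounts to having II keep \emph{all} of its tops --- successor and limit alike --- out of $E$ (there is no need to add $\delta^{\ast}$ to $E$ at $\omega$-cofinality limits: extendability and clause (4) can be checked directly, since $\mathbb{G}_{s,\delta^{\ast}}/\mathbb{G}_{s,\gamma}$ is a union of an $\omega$-chain with free quotients for every $\gamma\notin E$). Then the set $T$ of II's tops is closed under the limits taken by the strategy, player I can never later insert a point of $T$ into $E$ by coherence, and at every limit stage $T\cap\delta^{\ast}$ is a club of $\delta^{\ast}$ disjoint from $E_{p_{\xi}}$. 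This one club yields simultaneously clause (3) (non-reflection at $\delta^{\ast}$) and clause (2): filtering $\mathbb{G}_{s,\delta^{\ast}}$ along $T$, successive quotients are free by clause (4) because the lower index avoids $E$, so the union is free, with no case division on $\cf(\delta^{\ast})$. With this correction your strategic-closure argument goes through and is essentially the paper's proof in game form.
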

\begin{proof}
	Let us first show that $\bbP_*$ is $\lambda$-distributive. Thus suppose that $\mu < \lambda$
	and let $f: \mu \to V$ be a function in the generic extension by $\bbP_*.$ We will show that $f$ is in $V$.
	Let $\dot f$ be a $\bbP_*$-name for $f$ and let $p \in \bbP_*$ be such that
	\[
	p \Vdash \text{``}\dot f: \mu \to \check{V} \text{''}.
	\]
	By induction on $\xi \leq \mu$ we define a sequence $\langle  C_\xi: \xi \leq  \mu        \rangle$ of closed subsets of $\lambda$ and an increasing sequence  $\langle p_\xi: \xi \leq \mu         \rangle$
	of conditions in $\bbP_*$ such that:
	\begin{enumerate}
		\item $C_\xi$ is a closed subset of $\alpha_{p_\xi}+1$ with $\alpha_{p_\xi} \in C$, $C_\xi \cap \alpha_{p_\xi}$ is unbounded in
		$\alpha_{p_\xi}$ and $C_\xi \cap E_{p_{\xi}}=\emptyset,$
		
		\item $C_{\xi+1} \cap \alpha_{p_{\xi}}+1 = C_\xi,$
		
		\item if $\xi \leq \mu$ is a limit ordinal, then $C_\xi \cap \alpha_{p_\xi}=\bigcup\limits_{\zeta < \xi}C_{p_\zeta}$,
		
		\item $p_0=p,$
		
		\item for each $\xi < \mu, p_{\xi+1}$ decides $\dot{f}(\xi)$, say
		$p_{\xi+1} \Vdash$``$\dot f(\xi)=a_\xi$''.
		
		\item If $\xi \leq \mu$ is a limit ordinal and $\langle p_\zeta: \zeta < \xi        \rangle$
		is defined, then $p_\xi$ is the least upper bound of the sequence  $\langle p_\zeta: \zeta < \xi        \rangle$
		defined as follows:
		\begin{enumerate}
			\item $\alpha_{p_\xi}=\sup\limits_{\zeta < \xi}\alpha_\zeta$,
			
			\item $E_{p_\xi}= \big(\bigcup\limits_{\zeta < \xi}E_{p_\zeta} \cup \{ \alpha_{p_\xi} \} \big) \cap S^\lambda_{\aleph_0}$,
			
			\item for $s \in S$ and $\beta < \alpha_{p_\xi}, G^{p_\xi}_{s, \beta}=G^{p_\zeta}_{s, \beta}$ for some, and hence all, $\zeta < \xi$ with $\beta < \alpha_{p_\zeta}$,
			
			\item if $(s, t)\notin R$ and  $\beta < \alpha_{p_\xi}$, then  $\bold x^{p_\xi}_{s, t, \beta}=\bold x^{p_\zeta}_{s, t, \beta}$ for some, and hence all, $\zeta < \xi$ with $\beta < \alpha_{p_\zeta}$,
			
			\item for $s \in S$, $G^{p_\xi}_{s, \alpha_{p_\xi}}=\bigcup\limits_{\zeta < \xi}G^{p_\zeta}_{s, \alpha_{p_\zeta}}$,
			
			\item if $(s, t)\notin R$, then $\bold x^{p_\xi}_{s, t, \alpha_{p_\xi}}=\bigcup\limits_{\zeta < \xi}\bold x^{p_\zeta}_{s, t, \alpha_{p_\zeta}}$, which is defined in the natural way.
		\end{enumerate}
	\end{enumerate}
	Given $\xi \leq \mu$, let us show that $p_\xi$ as defined above is a condition, as then it is clear that $p_\xi$ extends all $p_\zeta$'s , for $\zeta < \xi.$
	Items (1) and (5) from Definition
	\ref{pstarforcing} are clearly satisfied.

	In order to see clause (2), it suffices to show that for each $s \in S,$  $G^{p_\xi}_{s, \alpha_{p_\xi}}$ is free.
	We have $C_{\xi} \cap \alpha_{p_\xi}$ is a club of $\alpha_{p_\xi}$ and
	\[
	G^{p_\xi}_{s, \alpha_{p_\xi}}=\bigcup\limits_{\gamma \in C_\xi \cap \alpha_{p_\xi}} G^{p_\xi}_{s, \gamma}.
	\]
	Furthermore, the sequence $\langle G^{p_\xi}_{s, \gamma}: \gamma \in C_\xi           \rangle$ is an increasing and continuous sequence of free groups, such that if
	$\gamma < \beta$ are successive points in $C_\xi,$ then $G^{p_\xi}_{s, \beta}/G^{p_\xi}_{s, \gamma}$ is free. It follows that $G^{p_\xi}_{s, \alpha_{p_\xi}}$
	is free as required.
	
	The set $E_{p_\xi}$ does not reflect as witnessed by
	$C_\xi$, hence clause (3) is satisfied.
	
	To show that clause (4) holds, let $\gamma <\beta < \alpha_{p_\xi}$
	with $\gamma \notin E_{p_\xi}.$ Let $\zeta < \xi$ be such that $
	\beta < \alpha_{p_\zeta}$. This implies $\gamma \notin E_{p_\zeta}$, and
	consequently
	\[
	G^{p_\xi}_{s, \beta} / G^{p_\xi}_{s, \gamma} = G^{p_\zeta}_{s, \beta} / G^{p_\zeta}_{s, \gamma}
	\]
	is free.
	
	Let $q=p_\mu,$ and let $h=\langle  a_\xi: \xi < \mu       \rangle$. Then $h \in V$
	and $q \Vdash$``$\dot {f}=h$''. As $p$ was arbitrary, we are done.

	Now as $\lambda^{<\lambda}=\lambda$ and $\bbP_* \subseteq \cH(\lambda)$, we have $|\bbP_*|=\lambda$ and hence it clearly satisfies
	the $\lambda^+$-c.c.
\end{proof}
\begin{remark}
	The above proof shows that the forcing notion $\bbP_*$ is indeed $\lambda$-strategically closed.
\end{remark}
\begin{lemma}
	\label{Estationary}Suppose $\mathbf G_*\subseteq \bbP_*$ is generic over $V$.
	Then $E:=\bigcup\limits_{p \in \mathbf G_*}E_p$ is a non-reflecting stationary subset of $\lambda.$
\end{lemma}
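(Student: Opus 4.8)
The plan is to verify separately that $E$ is stationary and that it does not reflect, relying throughout on the fact, established in Lemma \ref{pstarpreservescardinals}, that $\bbP_*$ is $\lambda$-distributive; in particular forcing with $\bbP_*$ adds no new sequences of ordinals of length $<\lambda$, so it preserves cofinalities $\leq\lambda$ and sends a ground model club of any $\delta<\lambda$ to a club of $\delta$. First I would record the coherence of the local data: since $\mathbf{G}_*$ is directed and $p\leq q$ forces $E_q\cap(\alpha_p+1)=E_p$ (clause (3) of the ordering in Definition \ref{pstarforcing}), any two $E_p,E_{p'}$ with $p,p'\in\mathbf{G}_*$ agree below $\min(\alpha_p,\alpha_{p'})+1$. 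Consequently $E\cap(\alpha_p+1)=E_p$ for every $p\in\mathbf{G}_*$, and by the density of $\{p:\alpha_p\geq\delta\}$ (one may always lengthen a condition by successor steps) every $\delta<\lambda$ is captured by some $p\in\mathbf{G}_*$.

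For non-reflection, fix $\delta<\lambda$ with $\cf(\delta)>\aleph_0$ and choose $p\in\mathbf{G}_*$ with $\alpha_p\geq\delta$. Then $E\cap\delta=E_p\cap\delta$ by the coherence above, and since $E_p$ does not reflect (it is part of a condition), in $V$ there is a club $C\subseteq\delta$ disjoint from $E_p$. By $\lambda$-distributivity $C$ remains a club of $\delta$ in $V[\mathbf{G}_*]$, and $C\cap(E\cap\delta)=\emptyset$, so $E\cap\delta$ is non-stationary in $\delta$, as desired.

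For stationarity I would run a density argument. Let $\dot C$ be a $\bbP_*$-name for a club of $\lambda$ and let $p\in\bbP_*$ be arbitrary. Building an increasing chain $\langle p_n:n<\omega\rangle$ with $p_0=p$, at step $n$ I use that $\dot C$ is forced unbounded to pass to $p_{n+1}\geq p_n$ deciding some $\beta_n\in\dot C$ with $\beta_n>\alpha_{p_n}$, arranging $\alpha_{p_{n+1}}$ to be a successor ordinal $>\beta_n$ (always possible: one adds free data of length one and leaves $E$ unchanged). Put $\delta=\sup_n\alpha_{p_n}=\sup_n\beta_n$, so $\cf(\delta)=\aleph_0$ and $\delta\in S^\lambda_{\aleph_0}$. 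Now I form the upper bound $q$ of $\langle p_n\rangle$ exactly as the limit condition is built in the proof of Lemma \ref{pstarpreservescardinals}, taking $E_q=(\bigcup_n E_{p_n}\cup\{\delta\})\cap S^\lambda_{\aleph_0}$, so that $\delta\in E_q$. Since $q\geq p_{n+1}\Vdash\beta_n\in\dot C$ for all $n$ and $\dot C$ is forced closed, $q\Vdash\delta=\sup_n\beta_n\in\dot C$; as $\delta\in E_q\subseteq E$, we get $q\Vdash\delta\in E\cap\dot C$. Thus $\{q:q\Vdash E\cap\dot C\neq\emptyset\}$ is dense, and genericity yields that $E$ meets every club, i.e.\ $E$ is stationary.

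The main thing to check carefully is that the upper bound $q$ is genuinely a condition, and the crux is the freeness of $\mathbb{G}^q_{s,\delta}=\bigcup_n\mathbb{G}^{p_n}_{s,\alpha_{p_n}}$ for each $s\in S$. Here the point of choosing each $\alpha_{p_n}$ to be a successor is that $\alpha_{p_n}\notin S^\lambda_{\aleph_0}\supseteq E_{p_{n+1}}$, so clause (4) of Definition \ref{pstarforcing} applied to $p_{n+1}$ gives that $\mathbb{G}^{p_{n+1}}_{s,\alpha_{p_{n+1}}}/\mathbb{G}^{p_n}_{s,\alpha_{p_n}}$ is free; since each $\mathbb{G}^{p_n}_{s,\alpha_{p_n}}$ is free and the consecutive quotients are free, every extension splits and the countable union is free. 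Non-reflection of $E_q$ is immediate because the only new point $\delta$ has cofinality $\aleph_0$, while for $\gamma<\delta$ one has $E_q\cap\gamma=E_{p_n}\cap\gamma$ for all large $n$, which is non-stationary in $\gamma$; the remaining clauses of Definition \ref{pstarforcing} for $q$ are verified exactly as in the limit step of Lemma \ref{pstarpreservescardinals}. This freeness-at-the-limit, together with the bookkeeping that keeps each $\alpha_{p_n}$ out of $E$, is the only delicate point; everything else is routine.
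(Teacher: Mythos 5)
Your proof is correct, and its core is the same as the paper's: a density argument in which one builds an increasing $\omega$-chain of conditions deciding cofinally many points of $\dot C$ below $\delta=\sup_n\alpha_{p_n}$, and then forms a limit condition $q$ with $\delta\in E_q$, so that $q\Vdash\delta\in\dot E\cap\dot C$. The differences are organizational and in level of detail. The paper routes the construction through an elementary submodel $M\prec(\cH(\chi),\in,\lhd)$ with $M\cap\lambda=\delta\in S^\lambda_{\aleph_0}$ and has $p_{n+1}$ decide the whole initial segment $\dot C\cap\alpha_{p_n}$, whereas you decide single points $\beta_n\in\dot C$ with $\alpha_{p_n}<\beta_n<\alpha_{p_{n+1}}$; this suffices, since closedness of $\dot C$ then forces $\delta=\sup_n\beta_n\in\dot C$, so the submodel machinery is genuinely dispensable here. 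You also supply two things the paper waves off as ``standard'': the coherence $E\cap(\alpha_p+1)=E_p$ together with absoluteness of ground-model clubs (plus preservation of uncountable cofinality, which is where distributivity actually enters), giving non-reflection; and the verification that $\mathbb{G}^q_{s,\delta}=\bigcup_n\mathbb{G}^{p_n}_{s,\alpha_{p_n}}$ is free, where your device of making each $\alpha_{p_n}$ a successor ordinal (hence outside $S^\lambda_{\aleph_0}\supseteq E_q$) is exactly the scaffolding needed, and is a detail the paper's ``it is easily seen that $q\in\bbP_*$ is well-defined'' glosses over. One pedantic repair: clause (4) of Definition \ref{pstarforcing} is stated for $\gamma<\beta<\alpha_p$ with strict inequality, so to conclude that $\mathbb{G}^{p_{n+1}}_{s,\alpha_{p_{n+1}}}/\mathbb{G}^{p_n}_{s,\alpha_{p_n}}$ is free you should apply the clause inside $p_{n+2}$ (where $\alpha_{p_n}<\alpha_{p_{n+1}}<\alpha_{p_{n+2}}$ and the relevant groups agree by clause (b)(2) of the ordering); this is a one-line fix that does not affect the argument.
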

\begin{proof}
	This is standard, so we just sketch the proof. Let $p  \in \bbP_*$ and suppose that $$p\Vdash``\dot{C} \subseteq \lambda\emph{
	is a club of } \lambda.''$$  Let $\chi > \lambda^+$ be large enough regular, $\lhd$ be a well-ordering of $\cH(\chi)$ and let $M \prec (\cH(\chi), \in, \lhd)$ be an elementary submodel of
	$\cH(\chi)$ such that:
	\begin{enumerate}
		\item $|M| < \lambda$,
		\item $p, \dot{C}, \bbP_*, \cdots \in M,$
		\item $M \cap \lambda=\delta$ for some $\delta \in S^\lambda_{\aleph_0}$.
	\end{enumerate}
	
	By induction on $n<\omega$ we define an increasing sequence $\langle p_n: n<\omega    \rangle$
	of conditions, together with a sequence $\langle C_n: n<\omega    \rangle$ satisfying the following:
	\begin{itemize}
		\item[(4)] $p \leq p_0,$
		\item[(5)] $p_{n+1}$ decides $\dot{C} \cap \alpha_{p_n}$ to be $C_n$,
		\item[(6)] $p_{n+2} \Vdash$``$\dot{C} \cap (\alpha_{p_n}, \alpha_{p_{n+1}}) \neq \emptyset$'',
		\item[(7)] $\sup\limits_{n<\omega}\alpha_{p_n}=\delta.$
	\end{itemize}
	We are now ready to define $$q:=\langle  \langle\mathbb{G}^q_{s, \beta}: s \in S, \beta \leq \delta \rangle,  E_q, \langle \mathbf x^q_{s,t,\beta}: (s, t) \notin R, \beta \leq \delta             \rangle\rangle,$$as follows:
	\begin{enumerate}
		\item[(8)] for $s \in S$ and $\beta < \delta, \mathbb{G}^q_{s, \beta}=\mathbb{G}^{p_n}_{s, \beta},$
		where $n$ is such that $\alpha_{p_n}>\beta$,
		\item[(9)] for $s\in S, \mathbb{G}^q_\delta=\bigcup\limits_{n<\omega}\mathbb{G}^{p_n}_{s, \alpha_{p_n}}$,
		
		\item[(10)] $E_q = \bigcup\limits_{n<\omega}E_{p_n} \cup \{\delta\}$,
		\item[(11)] if $(s, t) \notin R$ and $\beta<\delta,$ $\mathbf x^q_{s,t,\beta}=\mathbf x^{p_n}_{s,t,\beta}$, for some $n$ with $\alpha_{p_n} > \beta$,
		\item[(12)] $\mathbf x^q_{s,t,\delta}$ is the exact sequence that is the direct limit of the sequence $\mathbf x^{p_n}_{s,t,\alpha_{p_n}}$.
	\end{enumerate}

	It is easily seen that $q \in \bbP_*$ is well-defined and it forces $\delta \in \dot{C} \cap \dot{E}$,
	which completes the proof.
\end{proof}
Recall the following easy fact:

\begin{fact}\label{subfree}
	(See \cite{fuchs})	Any subgroup of a free abelian group is free.\end{fact}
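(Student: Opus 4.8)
The plan is to reduce everything to the classical chain argument built on a well-ordered basis. Let $F$ be free abelian with basis $\{e_i : i \in I\}$ and let $H \leq F$. First I would well-order $I$, identifying it with an ordinal $\sigma$, and form the smooth (increasing, continuous) chain $\langle F_\alpha : \alpha \leq \sigma \rangle$ given by $F_\alpha = \langle e_i : i < \alpha \rangle$, so that $F_0 = \{0\}$, $F_\sigma = F$, and $F_{\alpha+1} = F_\alpha \oplus \mathbb{Z}e_\alpha$ for every $\alpha < \sigma$. This is the only place the freeness of $F$ is used, and it reduces the problem to understanding how $H$ meets each one-step extension $F_\alpha \subseteq F_{\alpha+1}$.

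Next, intersecting with $H$, I would set $H_\alpha := H \cap F_\alpha$. This is again a smooth chain, with $H_0 = \{0\}$ and $\bigcup_{\alpha < \sigma} H_\alpha = H$ by continuity. The crucial local computation is that each successor quotient $H_{\alpha+1}/H_\alpha$ is infinite cyclic or trivial. Indeed, let $\pi_\alpha : F_{\alpha+1} \to F_{\alpha+1}/F_\alpha \cong \mathbb{Z}$ be the canonical projection; restricting $\pi_\alpha$ to $H_{\alpha+1}$ yields a homomorphism whose kernel is $H_{\alpha+1} \cap F_\alpha = H \cap F_\alpha = H_\alpha$, using $F_\alpha \subseteq F_{\alpha+1}$. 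Hence $H_{\alpha+1}/H_\alpha$ embeds into $\mathbb{Z}$, so it is free of rank at most one.

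Let $A = \{\alpha < \sigma : H_{\alpha+1}/H_\alpha \neq 0\}$, and for each $\alpha \in A$ choose $h_\alpha \in H_{\alpha+1}$ mapping under $\pi_\alpha$ to a generator of its image in $\mathbb{Z}$. I claim $B = \{h_\alpha : \alpha \in A\}$ is a basis of $H$. To establish this I would prove by transfinite induction on $\alpha \leq \sigma$ that $H_\alpha$ is freely generated by $\{h_\beta : \beta \in A \cap \alpha\}$. At a successor stage the short exact sequence $0 \to H_\alpha \to H_{\alpha+1} \to H_{\alpha+1}/H_\alpha \to 0$ splits because the quotient is free, so $H_{\alpha+1} = H_\alpha \oplus \mathbb{Z}h_\alpha$ when $\alpha \in A$ and $H_{\alpha+1} = H_\alpha$ otherwise; in either case the inductive description of a free generating set is preserved. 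Limit stages follow from continuity $H_\delta = \bigcup_{\alpha < \delta} H_\alpha$.

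I expect the main obstacle to be the bookkeeping in this transfinite induction, namely verifying that the chosen generators remain $\mathbb{Z}$-independent across all ordinal stages and that no new relation appears at a limit. This is controlled entirely by continuity at limit $\delta$: any purported relation is supported on finitely many $h_\beta$, hence already lives in some $H_\alpha$ with $\alpha < \delta$ where independence has been verified, so no relation can first emerge at a limit. Equivalently, one may bypass the explicit induction by invoking the standard principle that a smooth chain beginning at a free group with free successor quotients has a free union, applied to $\langle H_\alpha : \alpha \leq \sigma\rangle$; either route completes the argument and yields that $H$ is free.
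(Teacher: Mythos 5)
Your proof is correct and complete: the well-ordered filtration $F_\alpha = \langle e_i : i<\alpha\rangle$, the kernel computation showing $H_{\alpha+1}/H_\alpha$ embeds in $\mathbb{Z}$, the splitting $H_{\alpha+1}=H_\alpha\oplus\mathbb{Z}h_\alpha$, and the transfinite induction with the finite-support argument at limits are all sound. The paper offers no proof of this fact, citing Fuchs instead, and your argument is precisely the classical transfinite-induction proof found in that reference, so there is nothing to compare beyond noting the agreement.
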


For each $s \in S$ and $\beta < \lambda$, we set $\mathbb{G}_{s, \beta}=\mathbb{G}^p_{s, \beta}$ for some (and hence any) $p \in \mathbf G_*$
with $\alpha_p \geq \beta$. Also, we set
\[
\mathbb{G}_s:=\bigcup\limits_{\beta < \lambda}\mathbb{G}_{s, \beta}.
\]
Let us first show that $\mathbb G_s$ is not free.
\begin{lemma}
	\label{Gsproperties}
	Suppose $s \in S$. Then $\mathbb{G}_s$ is a non-free strongly $\lambda$-free abelian group of size $\lambda.$
\end{lemma}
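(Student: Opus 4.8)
The plan is to verify the three assertions in turn, disposing of strong $\lambda$-freeness and the cardinality quickly and concentrating the real work on non-freeness. First I would read off strong $\lambda$-freeness directly from Definition \ref{strongly}, taking as witnessing family
\[
\mathcal{S}=\{\mathbb{G}_{s,\beta}:\beta\in\lambda\setminus E\}\cup\{0\}.
\]
Every member is free and $<\lambda$-generated by clause (2) of Definition \ref{pstarforcing}. Given $X\subseteq\mathbb{G}_s$ with $|X|<\lambda$ and a member $\mathbb{N}=\mathbb{G}_{s,\beta}\in\mathcal{S}$, regularity of $\lambda$ together with continuity of the filtration yields $\gamma<\lambda$ with $X\subseteq\mathbb{G}_{s,\gamma}$; choosing any successor ordinal $\beta'>\max(\gamma,\beta)$ we have $\beta'\notin E$ (as $E\subseteq S^\lambda_{\aleph_0}$ consists of limit ordinals), so $\mathbb{L}=\mathbb{G}_{s,\beta'}\in\mathcal{S}$ satisfies $X\cup\mathbb{N}\subseteq\mathbb{L}$ while $\mathbb{L}/\mathbb{N}=\mathbb{G}_{s,\beta'}/\mathbb{G}_{s,\beta}$ is free by clause (4), since $\beta\notin E$. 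Hence $\mathbb{G}_s$ is strongly $\lambda$-free; taking $\mathbb{N}=\{0\}$ shows that every $<\lambda$-subgroup of $\mathbb{G}_s$ sits inside a free group and so is free by Fact \ref{subfree}, giving $\lambda$-freeness.

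For the cardinality, $|\mathbb{G}_s|\le\lambda$ since $\mathbb{G}_s$ is a union of $\lambda$ groups each lying in $\cH(\lambda)$ and $\lambda$ is regular. Once non-freeness is established the reverse inequality is automatic: if $|\mathbb{G}_s|<\lambda$, then $\mathbb{G}_s$, being a $<\lambda$-subgroup of the $\lambda$-free group $\mathbb{G}_s$, would be free, contradicting non-freeness. Thus $|\mathbb{G}_s|=\lambda$.

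It remains to prove that $\mathbb{G}_s$ is not free, and this is the heart of the matter. I would argue by contradiction through the $\Gamma$-invariant. If $\mathbb{G}_s$ were free, then since it is $\lambda$-free of cardinality $\lambda$, Lemma \ref{comch} supplies a club $C\subseteq\lambda$ disjoint from $\Gamma(\mathbb{G}_s,\bar{\mathbb{G}}_s)$, so that $\mathbb{G}_s/\mathbb{G}_{s,\delta}$ is $\lambda$-free for every $\delta\in C$; in particular each quotient $\mathbb{G}_{s,\delta+1}/\mathbb{G}_{s,\delta}$, a $<\lambda$-generated subgroup of $\mathbb{G}_s/\mathbb{G}_{s,\delta}$, would be free. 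By Lemma \ref{Estationary} the set $E$ is stationary, so one may pick $\delta\in C\cap E$ at which $\mathbb{G}_{s,\delta+1}/\mathbb{G}_{s,\delta}$ is free. The main obstacle is precisely to show that the generic construction forces this quotient to be \emph{non-free} at stationarily many $\delta\in E$: the defining clauses of $\bbP_*$ constrain quotients only over points \emph{outside} $E_p$ (clause (4)), so the non-freeness at $E$-points must be extracted from genericity rather than from a single condition.

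I would therefore run a density argument parallel to Lemma \ref{Estationary}. Given any condition $p$ and any $\bbP_*$-name for a club, one first passes to an extension whose top ordinal $\delta$ lies in $S^\lambda_{\aleph_0}$ and is placed into $E$, and then amalgamates one further step $\mathbb{G}^q_{s,\delta+1}\supseteq\mathbb{G}^q_{s,\delta}$ by an Eklof--Mekler/Pontryagin-style non-free extension of the type used in Lemma \ref{nonext} (adjoining a $\sum_n n!(\,\cdots\,)$ element of the $\mathbb{Z}$-adic completion), arranged so that $\mathbb{G}^q_{s,\delta+1}$ remains free and $\mathbb{G}^q_{s,\delta+1}/\mathbb{G}^q_{s,\gamma}$ remains free for all $\gamma<\delta$ with $\gamma\notin E_q$, while $\mathbb{G}^q_{s,\delta+1}/\mathbb{G}^q_{s,\delta}$ is non-free. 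Such a $q$ is still a legitimate condition (the non-reflection of $E_q$ keeps the limit groups free) and it meets the club, so in $V[\mathbf G_*]$ the set $\{\delta\in E:\mathbb{G}_{s,\delta+1}/\mathbb{G}_{s,\delta}\text{ is not free}\}$ is stationary. Since $\bbP_*$ is $\lambda$-distributive (Lemma \ref{pstarpreservescardinals}), the filtration $\langle\mathbb{G}_{s,\beta}:\beta<\lambda\rangle$ is correctly computed in the extension, so this stationary set witnesses that $\Gamma(\mathbb{G}_s,\bar{\mathbb{G}}_s)$ is stationary, contradicting the freeness of the quotient chosen above and completing the proof.
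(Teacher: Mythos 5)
Your proof is correct in overall strategy, but for the heart of the matter — non-freeness — it takes a genuinely different route from the paper. The paper argues externally: working in $V$, it fixes a non-free $\lambda$-free group $\mathbb{A}$ of cardinality $\lambda$ with filtration $\langle \mathbb{A}_\beta:\beta<\lambda\rangle$, shows the sets $D_\gamma$ of conditions whose top group contains $\mathbb{A}_\beta$ are dense (so $\mathbb{A}$ embeds into $\mathbb{G}_s$), checks by a Lemma \ref{Estationary}-style argument that $\mathbb{A}$ remains non-free in $V[\mathbf G_*]$, and concludes from Fact \ref{subfree} that $\mathbb{G}_s$ cannot be free. You argue internally: you force the generic filtration itself to have non-free successor quotients at stationarily many $\delta\in E$ and then invoke Lemma \ref{comch}. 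Each route buys something. The paper's is shorter once the embedding is in place, but it silently assumes that a non-free $\lambda$-free group of size $\lambda$ exists in the ground model, which fails when $\lambda$ is weakly compact; your argument manufactures the witness from the forcing itself and carries no such hidden hypothesis. Your treatment of strong $\lambda$-freeness (via the family $\{\mathbb{G}_{s,\beta}:\beta\notin E\}$ and clause (4)) and of the cardinality (deducing $|\mathbb{G}_s|=\lambda$ from $\lambda$-freeness plus non-freeness) is correct and fills in what the paper dismisses as clear.

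One step of your density argument must be made explicit, because as written it can fail. Clause (4) of Definition \ref{pstarforcing} has to be read as applying to all $\beta\le\alpha_p$ (with the strict reading, conditions whose top group has a non-free quotient over a non-$E_p$ point would be dead ends, a generic filter could concentrate on one, and the whole construction would collapse); under that reading, any extension $r$ with $\alpha_r=\delta+1$ must have $\mathbb{G}^r_{s,\delta+1}/\mathbb{G}^r_{s,\gamma}$ free for every $\gamma<\delta$ outside $E_r$. Now nothing in the club-catching construction of Lemma \ref{Estationary} prevents the chain $\langle \mathbb{G}^q_{s,\gamma}:\gamma<\delta\rangle$ from being eventually constant (each $p_{n+1}$ may extend $p_n$ by constant groups), and in that case $\mathbb{G}^r_{s,\delta+1}/\mathbb{G}^r_{s,\delta}$ equals a quotient over a non-$E$ point and is therefore forced to be free: no Pontryagin-style extension exists over such a $q$. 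The fix is easy but must be built into the $\omega$-chain rather than "arranged" afterwards: when passing from $p_n$ to $p_{n+1}$, also adjoin a fresh free summand $\mathbb{Z}x_n$ at a successor level, so that fresh basis elements occur cofinally below $\delta$; then adjoining the tails $w_m$ with $w_m=(m+1)w_{m+1}+x_m$ keeps $\mathbb{G}^r_{s,\delta+1}$ free, keeps all quotients over non-$E$ points free (over the chosen cofinal levels by direct computation, elsewhere because an extension of a free group by a free group is free), and makes $\mathbb{G}^r_{s,\delta+1}/\mathbb{G}^r_{s,\delta}\cong\mathbb{Q}$; the $\mathbf x$-coordinates of the condition extend trivially since every exact sequence occurring in a condition consists of free groups and hence splits. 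With this emendation your proof is complete.
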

\begin{proof}
	It is clear from Definition \ref{pstarforcing}(a)(2) that $\mathbb{G}_s$ is a strongly $\lambda$-free abelian group of size $\lambda.$
	Let us show that it is not free.
Recall that we are	in $\bold V,$ and let $\mathbb{A}$ be a $\lambda$-free abelian group of size $\lambda$ which is not free. We may assume that the universe of $\mathbb{A}$ is $\lambda.$ Let also $ \langle  A_\alpha: \alpha < \lambda       \rangle$
	be a filtration of $\mathbb A$. It follows that the set
	\[
	E_{\mathbb A}:=\{ \alpha < \lambda: \mathbb A/ \mathbb A_\alpha \text{~in not~}\lambda\text{-free}           \}
	\]
	is stationary. We apply an argument similar to the proof of Lemma
	\ref{Estationary}, and we are able to show  $E_{\mathbb A}$ remains stationary in $\bold V[\bold{G}_*]$. In  particular,  $\mathbb A$ remains
	non-free in the generic extension $\bold V[\bold{G}_*]$.

	For each $\gamma < \lambda$ let $D_\gamma$ be the set
	of all
	$p \in \bbP_*$ such that there exists $\beta$ such that:
	\begin{itemize}
		\item  $\alpha_p=\beta+1 >\gamma$,
		\item $\mathbb{G}^p_{s, \beta+1}$ includes  $\mathbb{A}_\beta$ as a subgroup.
	\end{itemize}
	It is easily seen that each set $D_\gamma$ is dense in $\bbP_*.$ Now for each $\gamma< \lambda$  pick some $p_\gamma \in D_\gamma \cap \bold G_*$
	and set $\alpha_{p_\gamma}=\beta_\gamma+1 > \gamma$.

	Now we look at the following commutative diagram:
	
	\[
	\begin{CD}
	\ldots @>\subseteq>> \mathbb{A}_{\beta_\gamma}@>\subseteq>>\mathbb{A}_{\beta_{\gamma+1}}	  @>\subseteq>>  \ldots\\
	@V \subseteq VV  @V \subseteq VV   @VV\subseteq V  \\
	\ldots @>\subseteq>> \mathbb{G}^{p_\gamma}_{s, \beta_{\gamma+1}}  @>\subseteq >> \mathbb{G}^{p_{\gamma+1}}_{s, \beta_{\gamma+1}+1} @>\subseteq>> \ldots
	\end{CD}
	\]
	Taking direct limits of these directed systems, lead us  to a natural inclusion map
	\[
	\mathbb{G}_{s}=\bigcup\limits_{\gamma < \lambda} \mathbb{G}^{p_\gamma}_{s, \beta_\gamma+1} \supseteq \bigcup\limits_{\gamma < \lambda} \mathbb{A}_{\beta_\gamma}= \mathbb{A}.
	\]
	As $\mathbb{A}$ is non-free, it follows from Fact \ref{subfree} that
	$\mathbb{G}_s$ is also non-free. This completes the proof.
\end{proof}

For each $(s, t) \in (S\times S)\setminus R$, we look at the following commutative diagram of short exact sequences:
$$
\begin{CD}
@.\vdots@.\vdots@.\vdots\\
@.\subseteq@AAA\subseteq @AAA \subseteq @AAA   \\
\mathbf x_{s, t, \beta}:= 0@>>> \mathbb{G}_{t,\beta} @>f_{s,t, \beta}>>\mathbb{H}_{s,t, \beta} @>g_{s, t, \beta}>> \mathbb{G}_{s,\beta} @>>> 0\\
@. \subseteq @AAA \subseteq@AAA \subseteq @AAA   \\
@.\vdots@.\vdots@.\vdots\\
@.\subseteq @AAA\subseteq@AAA \subseteq @AAA   \\
\mathbf x_{s, t, 1}:=0@>>> \mathbb{G}_{t,1} @>f_{s,t, 1}>>\mathbb{H}_{s,t, 1} @>g_{s, t, 1}>> \mathbb{G}_{s,1} @>>> 0\\
@.\subseteq@AAA\subseteq @AAA \subseteq @AAA   \\
\mathbf x_{s, t, 0}:=0@>>> \mathbb{G}_{t,0} @>f_{s,t,0 }>>\mathbb{H}_{s,t, 0} @>g_{s, t, 0}>> \mathbb{G}_{s,0} @>>> 0,\\
\end{CD}
$$
where for each $\beta<\lambda, \mathbf x_{s, t, \beta}=\mathbf x_{s, t, \beta}^p$, for some and hence any $p \in \bold G_*$ with $\alpha_p \geq \beta.$ By taking the corresponding inductive limit, we lead  to the following short exact sequence
$$
\begin{CD}
\mathbf x_{s,t} :=0@>>> \mathbb{G}_{t} @>f_{s,t}>>\mathbb{H}_{s,t} @>g_{s, t}>> \mathbb{G}_{s} @>>> 0.
\end{CD}
$$ In other words,
$\mathbf x_{s,t}:=\lim_{\beta \to \lambda} \mathbf x^p_{s, t, \beta}$.
The next lemma shows that $\mathbf x_{s,t}$ does not split.
\begin{lemma}
	\label{nosplitifincompatible}
	Suppose $s_*, t_* \in S$ and $(s_*, t_*) \notin R$. Then in $V[\mathbf G_*]$ the exact sequence
	$\mathbf x_{s_*,t_*}$ does not split. In particular, $\Ext(\mathbb{G}_{s_*}, \mathbb{G}_{t_*}) \neq 0.$
\end{lemma}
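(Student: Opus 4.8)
The plan is to argue by contradiction via a reflection/genericity argument, reducing the global non-splitting to a purely local non-extendability statement at a point of $E$, and then to read off $\Ext(\mathbb{G}_{s_*},\mathbb{G}_{t_*})\neq 0$ from Fact \ref{yoneda}. So suppose toward a contradiction that in $V[\mathbf{G}_*]$ the sequence $\mathbf{x}_{s_*,t_*}$ splits, say via a section $\sigma\colon\mathbb{G}_{s_*}\to\mathbb{H}_{s_*,t_*}$ with $g_{s_*,t_*}\circ\sigma=\id_{\mathbb{G}_{s_*}}$. Fix a $\bbP_*$-name $\dot\sigma$ and a condition $p^*\in\mathbf{G}_*$ forcing that $\dot\sigma$ is such a splitting. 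Since a splitting in the extension is forced by \emph{some} condition, it suffices to derive a contradiction from the mere existence of $p^*$, and I work entirely in $V$ below $p^*$. Note also that each $\mathbb{H}_{s,t,\beta}$ is free (the level-$\beta$ sequence splits because $\mathbb{G}_{s,\beta}$ is free), so $\mathbb{H}_{s_*,t_*}$ is a continuous union of free groups and is in particular $\aleph_1$-free; this is what makes the obstruction lemma applicable.

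First I would run a reflection argument parallel to the proof of Lemma \ref{Estationary}. Choose a large regular $\chi$, a well-ordering $\lhd$ of $\cH(\chi)$, and an elementary submodel $M\prec(\cH(\chi),\in,\lhd)$ with $|M|<\lambda$, containing $p^*,\dot\sigma,\bbP_*$ and all relevant parameters, and with $M\cap\lambda=\delta$ for some $\delta\in S^\lambda_{\aleph_0}$. Inside $M$ build an increasing $\omega$-chain $\langle p_n:n<\omega\rangle$ below $p^*$ with $\sup_n\alpha_{p_n}=\delta$ such that $p_{n+1}$ decides $\dot\sigma\restriction\mathbb{G}^{p_n}_{s_*,\alpha_{p_n}}$; this is legitimate since each such group has size $<\lambda$ and $\bbP_*$ is $\lambda$-distributive by Lemma \ref{pstarpreservescardinals}. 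By elementarity every decided value is coded below $\delta$, hence lies in $\mathbb{H}_{s_*,t_*,\delta}=\bigcup_n\mathbb{H}^{p_n}_{s_*,t_*,\alpha_{p_n}}$, so the union of the decided pieces is a genuine local section $\sigma_\delta\colon\mathbb{G}_{s_*,\delta}\to\mathbb{H}_{s_*,t_*,\delta}$ of $\mathbf{x}_{s_*,t_*,\delta}$. Let $q$ be the limit condition with $\alpha_q=\delta$ and $\delta\in E_q$, formed exactly as in Lemma \ref{Estationary} (using the clubs $C_n$ to keep $E_q$ non-reflecting); then $q\ge p^*$ and $q\Vdash\dot\sigma\restriction\mathbb{G}_{s_*,\delta}=\sigma_\delta$.

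The heart of the matter, and the step I expect to be the main obstacle, is the local non-extendability. Writing $\delta=\sup_n\gamma_{\delta,n}$ with $\gamma_{\delta,n}\notin E$, I would extend $q$ to a condition $q'$ with $\alpha_{q'}=\delta+1$, keeping $\delta\in E_{q'}$, in which $\mathbb{G}^{q'}_{s_*,\delta+1}$ is a free overgroup of $\mathbb{G}_{s_*,\delta}$ with $\mathbb{G}^{q'}_{s_*,\delta+1}/\mathbb{G}_{s_*,\delta}$ non-free (so $\delta$ genuinely witnesses non-freeness) but $\mathbb{G}^{q'}_{s_*,\delta+1}/\mathbb{G}_{s_*,\gamma}$ free for every $\gamma<\delta$ with $\gamma\notin E$, and in which the extended sequence $\mathbf{x}^{q'}_{s_*,t_*,\delta+1}$ is chosen so that $\sigma_\delta$ admits no extension to a section, even into any further $\aleph_1$-free overgroup. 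This is arranged by the $\mathbb{Z}$-adic completion device of Lemma \ref{nonext}: one adjoins an element of the form $\sum_n n!\,(y_n+a_n z_n)\in\widehat{\mathbb{G}_{s_*,\delta}}$ and builds the companion extension of $\mathbb{H}_{s_*,t_*,\delta}$ so that any putative section extending $\sigma_\delta$ would have to satisfy incompatible divisibility conditions; since the ambient groups are torsion-free, the divisibility computation of Claim (A) in Lemma \ref{nonext} yields the contradiction, and a suitable choice of the parameter $\vec{a}$ blocks the section. The delicate points here are that I must block a \emph{section} (a homomorphism $h$ with $g_{s_*,t_*}\circ h=\id$) rather than an arbitrary homomorphism, that the blocking must hold against the whole generic $\mathbb{H}_{s_*,t_*}$ rather than merely its $(\delta+1)$-level, and that it must be achieved while preserving clauses (2)--(5) of Definition \ref{pstarforcing}, in particular freeness of the quotients off $E_{q'}$; all of this is exactly the technical content carried by (the $\aleph_1$-free variant of) Lemma \ref{nonext}.

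Finally I would close the loop. We have $q'\ge q\ge p^*$, so $q'\Vdash$``$\dot\sigma$ is a splitting of $\mathbf{x}_{s_*,t_*}$'' and $q'\Vdash\dot\sigma\restriction\mathbb{G}_{s_*,\delta}=\sigma_\delta$; hence $q'$ forces $\dot\sigma\restriction\mathbb{G}^{q'}_{s_*,\delta+1}$ to be a section extending $\sigma_\delta$. But $q'$ was built precisely so that no such extension exists in any $\aleph_1$-free overgroup of $\mathbb{H}_{s_*,t_*,\delta}$, and $\mathbb{H}_{s_*,t_*}$ is such a group; this is a contradiction. Therefore no condition forces $\dot\sigma$ to split, so $\mathbf{x}_{s_*,t_*}$ does not split in $V[\mathbf{G}_*]$. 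By Fact \ref{yoneda}, the class $[\mathbf{x}_{s_*,t_*}]$ is a non-zero element of $\Ext(\mathbb{G}_{s_*},\mathbb{G}_{t_*})$, whence $\Ext(\mathbb{G}_{s_*},\mathbb{G}_{t_*})\neq 0$, as required.
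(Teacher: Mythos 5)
Your outer skeleton is the same as the paper's: fix a name $\dot\sigma$ and a condition forcing it to be a splitting, run the elementary-submodel/$\omega$-chain argument of Lemma \ref{Estationary} to reach a condition $q$ with $\alpha_q=\delta\in E_q\cap S^\lambda_{\aleph_0}$ deciding $\dot\sigma\restriction\mathbb{G}_{s_*,\delta}=\sigma_\delta$, then extend $q$ by one step so that $\sigma_\delta$ cannot be extended, and read off non-vanishing of $\Ext$ from Fact \ref{yoneda}. Up to that point your proposal matches the paper.

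The gap is exactly at the step you yourself call the heart of the matter: you discharge it onto Lemma \ref{nonext}, and that lemma does not do this job. Its conclusion (clause (g)) blocks homomorphisms $\dot{\mathbb{G}}^2\to\dot{\mathbb{L}}_{\varepsilon(*)}$ extending $\mathbf{f}\cup\id_{\dot{\mathbb{G}}^1}$, where $\dot{\mathbb{L}}$ ranges over \emph{constructions} in the sense of Definition \ref{a1d} over $\dot{\mathbb{G}}^1$ subject to (g4)--(g6); neither $\mathbb{H}_{s_*,t_*}$ nor its level approximations is such a construction, and a splitting is not a map extending a fixed partial map into a fixed subgroup --- it is a homomorphism $h'$ with the side condition $g'\circ h'=\id$, whose existence is governed by $\ker g'$, not by the ambient group. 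Concretely, the device as you describe it --- adjoin $\sum_n n!\,(y_n+a_nz_n)$ to $\mathbb{G}_{s_*,\delta}$ and take a ``companion'' extension of $\mathbb{H}_{s_*,t_*,\delta}$ --- does not by itself block sections: a section extending $\sigma_\delta$ amounts to choosing a preimage of the new generator, i.e.\ an element of a coset of $\ker g'$, and if the companion extension mirrors the adjoined chain (defect-free lifts of the new relations), such a preimage always exists, so the extended sequence still splits compatibly with $\sigma_\delta$. What actually blocks sections is the asymmetric construction of the paper's Claim \ref{splitcontra}: one extends $\mathbb{G}_{s_*,\delta}$ by a divisibility chain and extends $\mathbb{H}_{s_*,t_*,\delta}$ by a chain whose relation-defects are fresh basis elements of the kernel copy of $\mathbb{G}_{t_*,\delta}$, so that any section extending $h_*=\sigma_\delta$ is forced to solve a division-demanding recursion inside that free group, where finite support rules out a solution; and one must simultaneously verify that the outcome is again a legal condition (clauses (2)--(5) of Definition \ref{pstarforcing}, in particular freeness of the new groups and of the quotients off $E$) and define the new exact sequences for all other pairs outside $R$ (items $r_1)$--$r_6)$ of the paper). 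None of this is obtainable from Lemma \ref{nonext}; in particular your stronger assertion that the obstruction holds ``into any further $\aleph_1$-free overgroup'' is not what that lemma gives (its conclusion is restricted to constructions), nor what the argument needs --- the paper only needs the obstruction against sequences that stronger conditions can legally produce, which is what its closing density step uses. So the central construction of the proof is missing from your proposal, replaced by a citation that does not apply.
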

\begin{proof}
	Suppose towards contradiction that the exact sequence
	$\mathbf x_{s_*,t_*}$ splits and let $p \in \bbP_*$ and $\name{h}$ be such that
	\begin{center}
		$p \Vdash$``$\name{h}: \name{\mathbb{G}}_{s_*} \to \name{\mathbb{H}}_{s_*,t_*}$ is such that $\name h \circ \name{g}_{s_*,t_*}=\id_{\name{\mathbb{G}}_{s_*}}$''.
	\end{center}
	As in the proof of Lemma \ref{Estationary}, we can find an extension $q \leq p$ such that
	\begin{enumerate}
		\item $\alpha_q\in E_q \cap S^\lambda_{\aleph_0},$
		\item $q$ decides $\name{h} \restriction \mathbb G^q_{s_*, \alpha_q},$ say
		\[
		q \Vdash \text{``} \name{h} \restriction \mathbb G^q_{s_*, \alpha_q} = h_* \text{''}.
		\]
	\end{enumerate}
	We will need the following claim.

	\begin{claim}
\label{splitcontra}
Let		$$
\begin{CD}
\mathbf x_{s_*, t_*, \alpha_q}:=0@>>> \mathbb{G}^q_{t_*,\alpha_q} @>f^q_{s_*,t_*,\alpha_q }>>\mathbb{H}^q_{s_*,t_*, \alpha_q} @>g_{s_*, t_*, \alpha_q}>> \mathbb{G}^q_{s_*,\alpha_q} @>>> 0\\
\end{CD} $$ be given and suppose
$h_*$ is an splitting of
$g_{s_*, t_*, \alpha_q}$.
Then there exists an exact sequence $\mathbf x_{s_*, t_*}$ that fits in the following commutative diagram
		$$
		\begin{CD}
		\mathbf x_{s_*, t_*}:=0@>>> \mathbb{G}'_{t_*} @>f'>>\mathbb{H}'_{s_*,t_*} @>g'>> \mathbb{G}'_{s_*} @>>> 0\\
		@.@AAA @AAA @AAA   \\
		\mathbf x_{s_*, t_*, \alpha_q}:=0@>>> \mathbb{G}^q_{t_*,\alpha_q} @>f^q_{s_*,t_*,\alpha_q }>>\mathbb{H}^q_{s_*,t_*, \alpha_q} @>g_{s_*, t_*, \alpha_q}>> \mathbb{G}^q_{s_*,\alpha_q} @>>> 0\\
		\end{CD}
		$$
		such that there exists no splitting $h'$ of $g'$ extending $h_*$. Furthermore, there is an extension $r$ of $q$
such that $\alpha_ r=\alpha_q+1$ and $\bold x^r_{s_*, t_*, \alpha_r}=\bold x_{s_*, t_*}$.
	\end{claim}
	\begin{proof}
Without loss of generality, either $\alpha_p$ is a cardinal, or $|\alpha_p|<\alpha_p $ and $\alpha_p^\omega=\alpha_p$. 	Since $\cf(\alpha_p)=\omega$, there is an increasing sequence $\langle \gamma_m: m<n \rangle$ which is cofinal in $\alpha_p$ with $\gamma_m\notin E_q$. To simplicity, let $(s_1,s_2)=(t_*,s_*)$
		and for $\ell=1,2$ we set $\mathbb{G}_{s_{\ell}}:=\mathbb{G}^q_{s_{\ell},\alpha_q}$.
	We have $$\mathbb{G}_{s_\ell}=\bigcup\limits_{n<\omega} \mathbb{G}^q_{s_{\ell},\gamma_n},$$ where for each $n$, both $\mathbb{G}^q_{s_{\ell},\gamma_n}$
and $\mathbb{G}^q_{s_{\ell},\gamma_{n+1}}/ \mathbb{G}^q_{s_{\ell},\gamma_n}$ are free, so
 we can find a free basis $\vec{x}_{\ell}:=\langle x_\alpha^{\ell}: \alpha<\alpha_q \rangle$
	of $\mathbb{G}_{s_{\ell}}$ such that  $\vec{x}_{\ell}\rest \gamma_n$ is a
	free basis of 	$\mathbb{G}^q_{s_{\ell},\gamma_n}$.
	Without loss of generality, we may and do assume that $f^q_{s,t,\alpha_q }$ is the natural inclusion morphism:
	$$\mathbb{G}_{{s_{1}}} \stackrel{f^q_{s,t,\alpha_q }}\subseteq\mathbb{H}^q_{{s_{2},s_{1}}, \alpha_q},$$
		and $h_\ast(x^2_\alpha)=x^2_\alpha$ for any $x^2_\alpha
\in \mathbb{G}_{s_{2}}$.

We define the abelian group
		$\mathbb{G}_{s_{\ell}}'$  to be generated by $$X_{\ell}:=\mathbb{G}_{s_{\ell}}	\cup\{z^\ell_n:n<\omega\}$$freely except the equations below which 	 $\mathbb{G}_{s_{\ell}}'$
		satisfy:
		\begin{itemize}
		\item[$(\ast)_1$]: $n!z^\ell_{n-1}=z^\ell_n+ x^\ell_{\gamma_{n}}$.
	\end{itemize}
		Recall that $\langle x^1_\alpha,x^2_\alpha: \alpha<\alpha_q \rangle$
		is a free basis of $\mathbb{H}_{s_{1},s_{2}}$.
			 We let
			 $\mathbb{H}_{s_{1},s_{2}}'$be the abelian group generated by
		$$\mathbb{H}_{s_{1},s_{2}}\cup\{z_n:n<\omega\},$$ freely except the equations below
			 which 	$\mathbb{H}_{s_{1},s_{2}}$ should satisfy:
		\begin{itemize}
		\item[$\odot_1$]: $n!z_{n-1}=z_n+ x^1_{\gamma_{n}}+ x^2_{\gamma_{n}}$.
	\end{itemize}	
	
	First, we show that these new abelian groups $\{\mathbb{G}_{s_{1}}',\mathbb{H}_{s_{1},s_{2}}',\mathbb{G}_{s_{2}}'\}$ are free, by presenting their bases:
		\begin{itemize}\item[$(b)_1$]: The group $\mathbb{G}_{s_{1}}'$ is free. Indeed, in view of $(\ast)_1$ we observe that  $z^1_n=n!z^1_{n-1}-x^1_{\gamma_{n}}.$   An easy inductive argument on $\ell$ yields that $z^1_n\in \langle x^\ell_{\gamma_{n}},z^1_{0}\rangle,$  and so  the set $$B_1:=\{x^1_\alpha: \alpha < \alpha_q\} \cup \{z^1_0\}$$ generates $\mathbb{G}_{s_{1}}'$. Since there is no relation involved in $B_1$ we deduce that it is a free base of $\mathbb{G}_{s_{1}}'$. Thus, $\mathbb{G}_{s_{1}}'$ is free, as claimed.
		 \item[$(b)_2$]: The group $\mathbb{G}_{s_{2}}'$ is free. Indeed, in  $(b)_1$ replace $B_1$ with $$B_2:=\{x^1_\alpha: \alpha < \alpha_q\} \cup \{z^2_0\}$$ and conclude that $\mathbb{G}_{s_{2}}'$ is free.
		 \item[$(b)_3$]: The group  $\mathbb{H}_{s_{1},s_{2}}'$ is free. Indeed, in view of $\odot_1$ the set $$B_3:=\{x^1_\alpha, x^2_\alpha: \alpha < \alpha_q\} \cup \{z_0\}$$ generates $\mathbb{H}_{s_{1},s_{2}}'$. Since there is no relation involved in elements of $B_3$ we deduce that it is a free base of $\mathbb{H}_{s_{1},s_{2}}'$. Thus, $\mathbb{H}_{s_{1},s_{2}}'$ is free, as claimed.
	\end{itemize}

Now, we are going to define a map $f'$ (resp.  $g'$)
	extending $\id_{\mathbb{G}_{s_{1}}}$ (resp. $g=g_{s_1,s_2, \alpha_q}) $ such that the following data becomes an exact sequence	$$
	\begin{CD}
0@>>> \mathbb{G}_{s_{1}}' @>f'>>\mathbb{H}_{s_{1},s_{2}}' @>g'>> \mathbb{G}_{s_{2}}' @>>> 0.\\
	\end{CD} $$Since we need $f'$  extends $f$, we  define $f'(x^1_{\alpha}):=x^1_{\alpha}$.
 In order to complete the definition of $f'$, we proceed by induction on $n$ to define $f'(z^1_n)$. When $n = 0$, we set $$f'(z^1_0):=z_1-z_0-x^2_{\gamma_0}\quad(\dagger).$$ Now suppose  $f'(z^1_{n-1})$ is defined.
We use the equation $n!z^1_{n-1}=z^\ell_n+x^1_{\gamma_{n}}$ and define
$$f'(z^1_{n}):=n!f'(z^1_{n-1})-x^1_{\gamma_{n}}\quad(+)$$
Also, the assignments $z_n\mapsto  z^2_n$, $x^1_\alpha \mapsto  0$ and $x^2_\alpha \mapsto  x^2_\alpha$  define a map $g':\mathbb{H}_{s_{1},s_{2}}'\to\mathbb{G}_{s_{2}}' $ which extends $g$. This is well-defined. Namely,	 it sends the relation $$n!z_{n-1}=z_n+ x^1_{\gamma_{n}}+ x^2_{\gamma_{n}}$$ from $\odot_1$ into the relation
 $n!z^2_{n-1}=z^2_n+ x^2_{\gamma_{n}}$ from  $(\ast)_1$.

Let us  show that $\Rang(f')\subseteq \ker(g')$. To see this, first note that $g'(f'(z^1_0))=0$. Now suppose by induction that  $g'(f'(z^1_{n-1}))=0$. Then $$g'(f'(z^1_{n}))\stackrel{(+)}=n!g'(f'(z^1_{n-1}))n!-g'(x^1_{\gamma_n})=-g'(x^1_{\gamma_n})=0.$$
We proved that $$\Rang(f')\subseteq \ker(g')\quad(\star)$$ To see the reverse inclusion, we revisit $\odot_1$
and note that $\ker(g')$ is generated by $$\mathbb{G}_{s_1} \cup \{k_n:=n!z_{n-1}-z_n- x^2_{\gamma_{n}}\}.$$
As $g'$ extends $g$, we have $$\mathbb{G}_{s_1} = \Rang(f)\subseteq \Rang(f')\stackrel{(\star)}\subseteq \ker(g').$$ By  induction on $n$, we show that $k_n\in\Rang(f')$. Following $(+)$ it is enough to deal with $n=0$,
and in this case $k_0=f'(z_0^1)$. In sum, we proved that
$$
\begin{CD}
0@>>> \mathbb{G}_{s_{1}}' @>f'>>\mathbb{H}_{s_{1},s_{2}}' @>g'>> \mathbb{G}_{s_{2}}' @>>> 0\\
\end{CD} $$is an exact sequence of free abelian groups.
	
	Assume toward the contradiction that
	there exists $h'\in\Hom(\mathbb{G}_{s_{2}}',\mathbb{H}_{s_{1},s_{2}}')	$
		such that $h'\supseteq h_\ast$.
		Let $z^\ast_n:=h'(z^2_n)\in \mathbb{H}_{s_{1},s_{2}}'$. As	
			$$\mathbb{G}_{s_{2}}'\models n!z^2_{n-1}=z_n^2+x^2_{\gamma_{n}},$$
			we have
				\begin{itemize}
				\item[$\odot_2$]: $\mathbb{H}_{s_1,s_{2}}'\models n!z^\ast_{n-1}=z_n^\ast+ h_\ast(x^2_{\gamma_{n}}).$
			\end{itemize}	
			%So, for some $n<\omega$, $$z'_{n}:=z_n^\ast-z^2_n\in\mathbb{G}_{s_{1}}'$$
			Subtract $\odot_1$-$\odot_2$ we get
			\begin{itemize}
			\item[$\odot_3$]: $$\mathbb{H}_{s_1,s_{2}}'\models n!(z_{n-1}-z^*_{n-1})=(z_{n}-z^*_n)+  x^1_{\gamma_{n}}.$$
		\end{itemize}
	Noting that $g'(z_{n}-z^\ast_{n})=z^2_n-z^2_n=0$. From this, $z_n - z^*_n=f'(y_n)$ for some $y_n\in\mathbb{G}_{s_{1}}'$. Recall that $f'$ is injective. Apply this along with $\odot_3$ and deduce that $$\mathbb{G}_{s_{1}}'\models n!y _{n-1}=y_{n}+  x^1_{\gamma_{n}}.$$
	Due to the uniqueness of solutions of $(\ast)_1$ we conclude that $$f'(z^1_n)=f'(y_n)=z_n- z_n^\ast=z_n-h'(z^2_n).$$In other words, we determined $h'$, that is:
	$$h'(z^2_n)=z_n-f'(z^1_n).$$
We substitute this  from $(\dagger)$ and evaluate $g'$
	on the both sides of that equation,
	then we  observe that
	\begin{equation*}
	\begin{array}{clcr}
	z^2_0&=g'(h'(z^2_0))\\ &= g'(z_0-f'(z^1_0))\\
	&
	= g'(z_0-(z_1-z_0-x^2_{\gamma_0}))\\
	&
	=z_0^2-z_1^2+z_0^2+x^2_{\gamma_0}.
	\end{array}
	\end{equation*}
	Thus
\[
z_0^2= z_1^2-x^2_{\gamma_0}.
\]
But recalling from $(\ast)_1$,
\[
z_0^2=z_1^2+x^2_{\gamma_0},
\]
which imply $x^2_{\gamma_0}=0,$ a contradiction.
This contradiction shows that a such  $h'$ does not exist.

\iffalse	We combine this with $(\dagger)$ and observe that $$=f'(z^1_n)=f'(y_n)=z^\ast_{n}$$
\begin{itemize}
	\item[$\oplus$]: $\mathbb{H}_{s_1,s_{2}}'\models n!z^\ast_{n+1}=z_n^1-m^1_nx^1_{\gamma_{\eta(n)}}+m^2_nx^\ast_n.$
\end{itemize}
We note that all entries are from $\mathbb{G}_{s_{2}}'$. Now,
we make our choice:\begin{itemize}
	\item[(a)] $m^1_n:=1=:m^2_n$, and
	\item[(b)] 	 $\eta\in{}^{\omega}\omega$ is increasing such that $$x^1_{\gamma_{\eta(n)}}\notin\bigcup_{m\leq n}\supp (x^\ast_m)$$
\end{itemize}		
Th contraction that we searched for it follows by $\oplus$. Indeed,
since
$\vec{x}_{1}:=\langle \vec{x}_\alpha^{1}: \alpha<\alpha_q \rangle$
is a basis, the notion of support is well-define and is finite.
This means that
$	\bigcup_{m\leq n}\supp (x^\ast_m)$	
is finite. We take $k<\omega$ large enough to observe $$x_{\gamma_{\eta(k)}}\notin\bigcup_{m\leq n}\supp (x^\ast_m)$$		
So, we can choose the suitable $\eta$.
Pedantically the statement $\oplus$ is for middle, and is not enough but in the end,
if $h^\ast$ is a lifting of $ g_{s_1,s_2, \lambda}$
and $h'$ extends $h_q$, then  as
$$\mathbb{G}_{{s_{1}},\alpha_q+1}/\mathbb{G}_{{s_{1}},\alpha_q}\cong \mathbb{Q}\cong \mathbb{H}_{{s_{1},s_{2}},\alpha_q+1}/\mathbb{H}_{{s_{1},s_{2}},\alpha_q}$$we have
$\mathbb{G}_{{s_{1}},\lambda}/\mathbb{G}_{{s_{1}},\alpha_q}$ (resp. ${H}_{{s_{1},s_{2}},\lambda}/\mathbb{H}_{{s_{1},s_{2}},\alpha_q}$) is $\aleph_1$-free, which turns out that
$h^\ast$ maps $\mathbb{G}_{s_{1}}^1 $ into $\mathbb{H}_{s_{1},s_{2}} $. So, we are done.
\fi
Finally let  $r$ be the extension of $q$ such that:
\begin{itemize}
	\item[$r_1)$]: $\alpha_r=\alpha_q+1$,
	
	\item[$r_2)$]: $\mathbb{G}^r_{s, \alpha_r}=\mathbb{G}^q_{\alpha_q}$ if $s \notin \{s_*, t_*\}$,
	
	\item[$r_3)$]: $\mathbf x^r _{s_*, t_*, \alpha_r}=\mathbf x_{s_*, t_*},$ where $\mathbf x_{s_*, t_*}$ is the exact sequence defined above,
	
	\item[$r_4)$]: $\mathbf x^r _{s, t, \alpha_r}=\mathbf x^q_{s, t, \alpha_q}$ for all $(s, t) \notin R$ such that $ s \neq s_*$ and  $t \neq t_*,$
	
	\item[$r_5)$]: if $s=s_*$ and $t \neq t_*,$ then the desired sequence
	$\mathbf x^r_{s_*, t, \alpha_r}$ is defined as follows $$
	\begin{CD}
	0@>>> \mathbb{G}^r_{t, \alpha_r} @>f^r_{s_*, t, \alpha_r}>>\mathbb{H}^r_{s_*, t, \alpha_r} @>g^r_{s_*, t, \alpha_r}>> \mathbb{G}^r_{s_*, \alpha_r} @>>> 0,\\
	\end{CD} $$
	where
	\begin{itemize}
		\item[$(r_5.i)$]: $\mathbb{H}^r_{s_*, t, \alpha_r}=\frac{\mathbb{H}^q_{s_*, t, \alpha_q} \oplus \bigoplus_{n<\omega}z_n \mathbb{Z}}{\langle n!z_{n-1}-z_n-x^2_{\gamma_n}    \rangle}$,
		\item[$(r_5.ii)$]: $f^r_{s_*, t, \alpha_r}=f^q_{s_*, t, \alpha_q}$,
		\item[($r_5.iii)$]: $g^r_{s_*, t, \alpha_r} \restriction \mathbb{H}^q_{s_*, t, \alpha_q}=g^q_{s_*, t, \alpha_q}$ and $g^r_{s_*, t, \alpha_r}(z_n)=z^2_n,$
	\end{itemize}
	\item[$r_6)$]: In the case $s \neq s_*$ and $t = t_*,$  
the proposed sequence	$\mathbf x^r_{s, t_*, \alpha_r}$ is defined as follows $$
	\begin{CD}
	0@>>> \mathbb{G}^r_{t_*, \alpha_r} @>f^r_{s, t_*, \alpha_r}>>\mathbb{H}^r_{s, t_*, \alpha_r} @>g^r_{s, t_*, \alpha_r}>> \mathbb{G}^r_{s, \alpha_r} @>>> 0,\\
	\end{CD} $$
	where 
	\begin{itemize}
		\item[$(r_6.i)$]: $\mathbb{H}^r_{s, t_*, \alpha_r}=\frac{\mathbb{H}^q_{s, t_*, \alpha_q} \oplus \bigoplus_{n<\omega}z_n \mathbb{Z}}{\langle n!z_{n-1}-z_n-x^1_{\gamma_n}    \rangle}$,
		\item[($r_6.ii)$]:  $f^r_{s, t_*, \alpha_r} \restriction \mathbb{G}^q_{t_*, \alpha_q}=f^q_{s_*, t, \alpha_q}$
		and $f^r_{s, t_*, \alpha_r}(z^1_n)=0$, 
		\item[$(r_6.iii)$]: $g^r_{s_*, t, \alpha_r} \restriction \mathbb{H}^q_{s_*, t, \alpha_q}=g^q_{s_*, t, \alpha_q}$ and $g^r_{s_*, t, \alpha_r}(z_n)=0.$
	\end{itemize}
\end{itemize}
It is easily seen that $r$ as defined above is a condition extending $q$. 
\end{proof}
Let us continue the proof of Lemma \ref{nosplitifincompatible}.	
By the way we defined $r$, see the above itemized properties $r_1),\ldots,r_6)$, we have
\begin{center}
$r \Vdash$``there is no splitting map for $\name{g}_{s_*,t_*}$ extending $h_*$''.
\end{center}
In order to see this, let $r' \leq r$ be such that $r'$ decides $\name{h} \restriction \mathbb{G}^{r'}_{s_*, \alpha_r}$. Then
\[
r'\Vdash \text{``} \name{h} \restriction \mathbb{G}^{r'}_{s_*, \alpha_r}= g'  \text{''}.
\]

This contradicts the choice of $g'$. The argument of Lemma \ref{nosplitifincompatible} is now completed.	
\end{proof}

Let us now work in the generic extension
$V[\mathbf G_*]$. We  define a $\lambda$-support iteration
\[
\bbP=\langle  \langle \bbP_\alpha: \alpha \leq \lambda^+ \rangle, \langle \dot{\bbQ}_\beta: \beta < \lambda^+ \rangle                         \rangle
\]
of forcing notions which forces $\Ext(\mathbb{G}_s, \mathbb{G}_t) = 0$ for all  $s, t \in S$ with $s R t$. We first define the building blocks of this iteration.

Suppose $W \supseteq V[\mathbf{G}_*]$ is a forcing extension of $V[\mathbf{G}_*]$, $s, t \in S$
are such that $s R t$ and suppose that
$$
\begin{CD}
\mathbf x :=0@>>> \mathbb{G}_{s} @>f>>\mathbb{H} @>g>> \mathbb{G}_{t} @>>> 0
\end{CD}
$$
is an exact sequence in $W$.
\begin{definition}
\label{buildingblockforcing}
The forcing notion $\bbQ^W_{s, t, \mathbf{x}}$ consists of partial functions $q:\mathbb{G}_{t} \dashrightarrow \mathbb{H}$ with domain $\dom(q)$
such that:
\begin{enumerate}
\item $\dom(q)= \mathbb{G}_{s, \gamma}$, where $\gamma \notin E$,

\item $g \circ q = \id_{\dom(q)}$.
\end{enumerate}
$\bbQ^W_{s, t, \mathbf{x}}$ is ordered by inclusion.
\end{definition}

Thus, the forcing notion $\bbQ^W_{s, t, \mathbf{x}}$ aims to add a splitter for the exact sequence $\mathbf x$.
The next lemma shows that this is indeed possible.
\begin{lemma}
\label{qwforcingproerties}
The forcing notion $\bbQ^W_{s, t, \mathbf{x}}$ is $\lambda$-closed, $\lambda^+$-c.c., and forcing with it adds a function
$h: \mathbb{G}_t \to \mathbb{H}$ such that $g \circ h=\id_{\mathbb{G}_t}$.
\end{lemma}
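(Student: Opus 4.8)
The plan is to establish the three assertions in turn, the whole weight of the argument falling on $\lambda$-closure. The $\lambda^+$-chain condition is immediate: a condition is a partial section whose domain $\mathbb{G}_{t,\gamma}$ has size $<\lambda$ and whose range lies in $\mathbb{H}$, so, since $\lambda^{<\lambda}=\lambda$, the number of conditions is at most $\lambda^{<\lambda}\cdot\lambda=\lambda$. Hence $|\bbQ^W_{s,t,\mathbf x}|\le\lambda$ and every antichain has size $\le\lambda<\lambda^+$.

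For $\lambda$-closure, let $\langle q_i:i<\delta\rangle$ be increasing with $\delta<\lambda$ a limit ordinal, let $\mathbb{G}_{t,\gamma_i}=\dom(q_i)$ with $\gamma_i\notin E$, and set $\gamma=\sup_i\gamma_i$ and $q=\bigcup_i q_i$ (discarding the trivial case where the domains are eventually constant, in which the union is already one of the $q_i$). Then $q\colon\mathbb{G}_{t,\gamma}\to\mathbb{H}$ is a homomorphism with $g\circ q=\id$, i.e.\ a section over $\mathbb{G}_{t,\gamma}$. If $\cf(\delta)>\aleph_0$ then $\cf(\gamma)=\cf(\delta)>\aleph_0$, and since $E\subseteq S^\lambda_{\aleph_0}$ we get $\gamma\notin E$; thus $q$ is itself a condition extending every $q_i$, and we are done.

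The remaining, and main, case is $\cf(\delta)=\aleph_0$, where $\gamma$ may lie in $E$, so that $q$, although a genuine section over $\mathbb{G}_{t,\gamma}$, is not yet a condition. Here I would extend $q$ by one step, to a section over $\mathbb{G}_{t,\gamma+1}$; since $\gamma+1$ is a successor it lies outside $E$, so any such extension is a condition above all the $q_i$. Recall that at an $E$-point the construction underlying $\bbP_*$ makes $\mathbb{G}_{t,\gamma+1}/\mathbb{G}_{t,\gamma}\cong\bbQ$, generated over $\mathbb{G}_{t,\gamma}$ by a diagonal element $d=\sum_n n!\,y_{\gamma_n}$ with $y_{\gamma_n}\in\mathbb{G}_{t,\gamma}$, the sum taken $\mathbb{Z}$-adically. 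The defining relation forces $\tilde q(d)=\sum_n n!\,q(y_{\gamma_n})$, which a priori converges only in $\widehat{\mathbb H}$; choosing by surjectivity of $g$ some $\hat d\in\mathbb{H}$ with $g(\hat d)=d$, we have $g\bigl(\tilde q(d)-\hat d\bigr)=0$, so $\tilde q(d)-\hat d$ lies in the $\mathbb{Z}$-adic closure of $\ker g=\mathbb{G}_s$. The crux is to promote this to $\tilde q(d)-\hat d\in\mathbb{G}_s$, that is, to verify that the series names an element of $\mathbb{H}$ itself and not merely of its completion. I expect this to be the genuine obstacle of the lemma; it is to be settled using that $\mathbb{G}_t$ is torsion-free (whence $\mathbb{G}_s$ is pure in $\mathbb{H}$) together with the $\mathbb{Z}$-adic closedness of $\mathbb{G}_s$ in $\mathbb{H}$ built into the construction, after which $\tilde q$ is the required upper bound.

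Finally, let $\mathbf G\subseteq\bbQ^W_{s,t,\mathbf x}$ be generic and put $h=\bigcup_{q\in\mathbf G}q$; as $g\circ q=\id_{\dom(q)}$ for each $q$, we get $g\circ h=\id$ on $\dom(h)$. To see $\dom(h)=\mathbb{G}_t$ I would prove each $D_\gamma=\{q:\mathbb{G}_{t,\gamma}\subseteq\dom(q)\}$ dense. Starting from a condition with domain $\mathbb{G}_{t,\gamma_0}$, $\gamma_0\notin E$, one enlarges the domain through successor stages using that $\mathbb{G}_{t,\beta+1}/\mathbb{G}_{t,\beta}$ is free for $\beta\notin E$ (so sections extend with no obstruction), and through limit stages by the $\lambda$-closure just proved. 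At a limit $\gamma\in E$ there is now extra freedom, absent from the closure argument, to choose the section on the free quotient $\mathbb{G}_{t,\gamma}/\mathbb{G}_{t,\gamma_0}$ so that $\sum_n n!\,q(y_{\gamma_n})$ equals the canonical lift $\hat d\in\mathbb{H}$, which makes the one-step extension automatic; hence $D_\gamma$ is dense for every $\gamma<\lambda$. By genericity $h$ is then total, and $h\colon\mathbb{G}_t\to\mathbb{H}$ with $g\circ h=\id_{\mathbb{G}_t}$ is the desired splitting.
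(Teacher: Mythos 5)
Your $\lambda^+$-c.c.\ count is fine, and in fact simpler than the paper's appeal to the $\Delta$-system lemma: since $\lambda^{<\lambda}=\lambda$ persists in $W$ (GCH together with the $\lambda$-distributivity of $\bbP_*$ and the $\lambda$-completeness of the iteration, which add no new ${<}\lambda$-sequences), the whole poset has size $\le\lambda$. Your treatment of chains of uncountable cofinality is also exactly right: $E\subseteq S^\lambda_{\aleph_0}$ forces $\gamma=\sup_i\gamma_i\notin E$, so the union is itself a condition; this is the content of the paper's (very terse) remark that closure follows from Definition \ref{buildingblockforcing}(1) combined with Definition \ref{pstarforcing}(4).

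The gap is in the cofinality-$\omega$ case, and you effectively concede it yourself (``I expect this to be the genuine obstacle\dots it is to be settled using\dots''): a proposal that defers the crux is not a proof. Worse, the facts you reach for are not available in this construction. Nothing in Section 6 makes $\mathbb{G}_{t,\gamma+1}/\mathbb{G}_{t,\gamma}\cong\mathbb{Q}$, or generated by a $\mathbb{Z}$-adic diagonal element $\sum_n n!\,y_{\gamma_n}$: the conditions of $\bbP_*$ only require the filtration groups to be free, with free quotients over non-$E$ base points, and the special one-step extensions produced in Claim \ref{splitcontra} even have \emph{free} quotient (the paper exhibits bases $B_1,B_2$; the twisting there sits in $f'$, not in a divisible quotient). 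The diagonal-element-in-the-completion mechanism you invoke belongs to Lemma \ref{nonext} in Sections 4--5, not to Section 6. Likewise there is no ``$\mathbb{Z}$-adic closedness of $\mathbb{G}_s$ in $\mathbb{H}$ built into the construction'': $\mathbf{x}$ is an \emph{arbitrary} exact sequence (at iteration stages merely a name), so $\mathbb{H}$ carries no such structure, and your convergence step has nothing to stand on. Separately, your density argument is needlessly routed through $E$-points: since $\dom(q)=\mathbb{G}_{t,\gamma_0}$ with $\gamma_0\notin E$, Definition \ref{pstarforcing}(4) already gives $\mathbb{G}_{t,\gamma'}/\mathbb{G}_{t,\gamma_0}$ free for \emph{every} $\gamma'>\gamma_0$, so $q$ extends to any non-$E$ domain in a single unobstructed step --- precisely the combination the paper's proof alludes to, and with no ``extra freedom at $E$-limits'' needed. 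Note, however, that this one-step trick does not repair your cofinality-$\omega$ closure case, since the union's domain $\mathbb{G}_{t,\gamma}$ with $\gamma\in E$ is not over a non-$E$ base; that case is passed over silently in the paper's own proof as well, but filling it via completions and divisible quotients is not a correct route here.
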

\begin{proof}
The fact that $\bbQ^W_{s, t, \mathbf{x}}$  is $\lambda$-closed follows from a combination of Definition  \ref{buildingblockforcing}(1)
along with Definition \ref{pstarforcing}(4). This allows us to take unions (or direct limits) at limit stages and still have a condition. According to $\Delta$-system lemma,
the forcing is $\lambda^+$-c.c., and claimed.
\end{proof}

We are finally ready to define our iteration.
Let $\beta< \lambda^+$ and suppose that $\bbP_\beta$ is defined. If
$\Phi(\beta)$ is a $\bbP_* \ast \dot{\bbP}_\beta$-name for a triple $(s, t, \dot{\mathbf x}),$ where
$s, t \in S, s R t$ and $\dot{\mathbf x}$ is a name for an exact sequence
$$
\begin{CD}
\mathbf x :=0@>>> \mathbb{G}_{s} @>\dot{f}>>\dot{\mathbb{H}} @>\dot{g}>> \mathbb{G}_{t} @>>> 0,
\end{CD}
$$
in  $\cH(\lambda^+)$, then
%in $V^{\bbP_* \ast \dot{\bbP}_\beta}$, the forcing notion $\bbQ_\beta$ is defined as
\[
\Vdash_{\bbP_\beta}\text{``} \dot{\bbQ}_\beta=\dot{\bbQ}^{V^{\bbP_* \ast \dot{\bbP}_\beta}}_{s, t, \dot{\mathbf{x}}}\text{''.}
\]
Otherwise, let $\dot{\bbQ}_\beta$ be forced to be the trivial forcing notion.

The next lemma follows from \cite{Sh:587}.
\begin{lemma}
\label{s12}
Work in $V[\mathbf G_*].$ The forcing notion $\bbP=\bbP_{\lambda^+}$ is $\lambda$-complete and $\lambda^+$-c.c.
\end{lemma}
It follows from the above lemma that forcing with $\bbP$ preserves all cardinals and adds no new sequences of ordinals of length less than $\lambda$.
Suppose
\[
\mathbf G = \langle \langle \mathbf G_\alpha: \alpha \leq \lambda^+      \rangle, \langle \mathbf H_\beta: \beta < \lambda^+   \rangle\rangle
\]
is $\bbP$-generic over $V[\mathbf G_*]$.
\begin{lemma}
\label{whensRt}
Work in $V[\mathbf G_* \ast \mathbf G]$. If $s, t \in S$ and $s R t$,  then $\Ext(\mathbb{G}_s, \mathbb{G}_t)= 0$.
\end{lemma}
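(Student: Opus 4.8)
The plan is to reduce the vanishing of $\Ext(\mathbb{G}_s,\mathbb{G}_t)$ to a splitting statement and then to produce the splitting as an object added generically at a suitable stage of the iteration $\bbP$. By Fact \ref{yoneda}, $\Ext(\mathbb{G}_s,\mathbb{G}_t)=0$ is equivalent to the assertion that every short exact sequence
$$\mathbf x:= 0\longrightarrow \mathbb{G}_t\longrightarrow \mathbb{H}\stackrel{g}\longrightarrow \mathbb{G}_s\longrightarrow 0$$
of abelian groups in $V[\mathbf G_*\ast\mathbf G]$ splits, i.e.\ that $g$ admits a section $h\in\Hom(\mathbb{G}_s,\mathbb{H})$ with $g\circ h=\id_{\mathbb{G}_s}$. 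So I would first fix an arbitrary such $\mathbf x$; since these are extensions of groups of size $\lambda$, necessarily $|\mathbb{H}|=\lambda$, so $\mathbf x\in\cH(\lambda^+)$, and every class of $\Ext(\mathbb{G}_s,\mathbb{G}_t)$ is represented by a sequence of this form.

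The heart of the argument is a reflection-of-names step. Fixing a $\bbP$-name $\name{\mathbf x}$ for $\mathbf x$, I would use that (working in $V[\mathbf G_*]$) the iteration $\bbP=\bbP_{\lambda^+}$ is $\lambda^+$-c.c.\ and $\lambda$-complete by Lemma \ref{s12}. Then every antichain of $\bbP$ has size $\le\lambda$, so a name for an object of size $\lambda$, such as the group $\mathbb{H}$ together with the two maps $f,g$, is coded by $\le\lambda$ conditions; as the iteration has $\lambda$-support each such condition has support of size $\le\lambda$, and by the regularity of $\lambda^+$ the union of all these supports is bounded below $\lambda^+$ by some ordinal $\beta$. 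Hence $\name{\mathbf x}$ may be taken to be a $\bbP_\beta$-name, so that $\mathbf x\in V[\mathbf G_*\ast\mathbf G_\beta]$.

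Next I would invoke the book-keeping function $\Phi$. Because $s\,R\,t$ and $\Phi^{-1}[x]$ is unbounded in $\lambda^+$ for every $x\in\cH(\lambda^+)$, I can pick a stage $\beta'\ge\beta$ at which $\Phi(\beta')$ names the triple $(s,t,\name{\mathbf x})$; by the definition of the iteration this forces $\dot{\bbQ}_{\beta'}$ to be the building-block forcing of Definition \ref{buildingblockforcing} attached to $\mathbf x$, which is designed precisely to add a section of $g$. By Lemma \ref{qwforcingproerties} this forcing adds a homomorphism $h\colon\mathbb{G}_s\to\mathbb{H}$ with $g\circ h=\id_{\mathbb{G}_s}$; the point, already contained in that lemma, is that $\lambda$-closure together with the freeness clause in Definition \ref{pstarforcing}(4) (which makes the relevant quotients free off the non-reflecting set $E$) lets one extend any partial section across one more free block, so that the generic union is a total section. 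As $h\in V[\mathbf G_*\ast\mathbf G_{\beta'+1}]\subseteq V[\mathbf G_*\ast\mathbf G]$, it witnesses that $\mathbf x$ splits in the final model.

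Since $\mathbf x$ was arbitrary, every class of $\Ext(\mathbb{G}_s,\mathbb{G}_t)$ is trivial, which gives the lemma. I expect the main obstacle to be the reflection-of-names step: one has to confirm that an arbitrary extension living in the full generic extension is already present at a bounded stage $\beta<\lambda^+$, so that it becomes eligible to be caught by $\Phi$ at a later stage, and one must also verify that $\dot{\bbQ}_{\beta'}$ really is the intended building-block forcing for $\mathbf x$ rather than the trivial forcing. This is where the combination of $\lambda^+$-c.c., $\lambda$-support, and the regularity of $\lambda^+$ is indispensable.
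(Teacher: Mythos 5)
Your proposal is correct and takes essentially the same route as the paper: reduce $\Ext(\mathbb{G}_s,\mathbb{G}_t)=0$ via Fact \ref{yoneda} to splitting an arbitrary extension with middle group of size $\lambda$, reflect its name to a bounded stage $\beta<\lambda^+$ (using the $\lambda^+$-c.c.\ and $\lambda$-supports, a step the paper merely asserts and you spell out), catch the triple $(s,t,\dot{\mathbf x})$ with $\Phi$ at some stage $\beta'\geq\beta$, and let $\dot{\bbQ}_{\beta'}$ generically add the section by Lemma \ref{qwforcingproerties}. The only cosmetic divergence is the orientation of the short exact sequence: your convention $0\to\mathbb{G}_t\to\mathbb{H}\to\mathbb{G}_s\to 0$ is the one matching $\Ext(\mathbb{G}_s,\mathbb{G}_t)$ in Fact \ref{yoneda}, whereas the paper writes the arrows in the opposite order, an internal inconsistency of the paper rather than a flaw in your argument.
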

\begin{proof}
It suffices to show that any exact sequence
$$
\begin{CD}
\mathbf x :=0@>>> \mathbb{G}_{s} @>f>>\mathbb{H} @>g>> \mathbb{G}_{t} @>>> 0
\end{CD}
$$
with $\mathbb{H}$ of size $\lambda$ splits. Let $\dot{\mathbf x}$ be a $\bbP_* \ast \bbP$-name for $\mathbf x$ which we may assume that $\dot{\mathbf x} \in \cH(\lambda^+)$. Furthermore, we can find some $\alpha<\lambda^+$ such that
$\dot{\mathbf x}$ is a $\bbP_* \ast \bbP_\alpha$-name, and then by the choice of $\Phi,$ we may also assume that $\Phi(\alpha)=\dot{\mathbf x}$.
By definition of  forcing notion, we know
$$\Vdash_{\bbP_*\ast\bbP_{\alpha+1}}\text{``} \dot{\mathbf x} \text{~splits~}\text{'',}$$
and consequently
\[
\Vdash_{\bbP_*\ast\bbP}\text{``} \dot{\mathbf x} \text{~splits~}\text{''.}
\]
This completes the proof.
\end{proof}
Let $(s, t) \in (S \times S)\setminus R$. We now show that the iteration does not add splitters for $\mathbf x_{s,t}$,
\begin{lemma}
\label{snotRtcase}
Suppose $(s, t) \in (S \times S)\setminus R$. Then the exact sequence
$$
\begin{CD}
\mathbf x_{s,t} :=0@>>> \mathbb{G}_{s} @>f_{s,t}>>\mathbb{H}_{s,t} @>g_{s, t}>> \mathbb{G}_{t} @>>> 0
\end{CD}
$$
does not split in  $V[\mathbf G_* \ast \mathbf G]$.
\end{lemma}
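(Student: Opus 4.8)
The plan is to argue by contradiction and to reflect a hypothetical splitting back into the intermediate model $V[\mathbf{G}_*]$, where Lemma \ref{nosplitifincompatible} together with the local obstruction of Claim \ref{splitcontra} already forbids it. So suppose, towards a contradiction, that some $p \in \bbP$ and some $\bbP$-name $\dot{h}$ satisfy $p \Vdash_{\bbP}$``$\dot{h}\colon \mathbb{G}_t \to \mathbb{H}_{s,t}$ is a homomorphism with $g_{s,t}\circ \dot{h} = \id_{\mathbb{G}_t}$''. Since $\bbP$ is $\lambda^{+}$-c.c. (Lemma \ref{s12}) and the iteration has $\lambda$-support, the name $\dot h$, which concerns a subset of the size-$\lambda$ set $\mathbb{G}_t \times \mathbb{H}_{s,t}$, may be captured as a $\bbP_\alpha$-name for some $\alpha < \lambda^+$; this lets me work below a fixed stage of the iteration and keeps all the bookkeeping bounded.

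Next I would exploit that $\bbP$ is $\lambda$-complete (Lemma \ref{s12}). Fix the filtration $\langle \mathbb{G}_{t,\beta}\colon \beta < \lambda\rangle$, each $\mathbb{G}_{t,\beta}$ of size $<\lambda$. By induction I build a $\leq_{\bbP}$-increasing continuous sequence $\langle p_\beta \colon \beta < \lambda\rangle$ with $p_0=p$ and maps $h_\beta \in V[\mathbf{G}_*]$ such that $p_{\beta+1}\Vdash \dot h \restriction \mathbb{G}_{t,\beta}=h_\beta$; at limit stages I take an upper bound, which exists since $\bbP$ is $\lambda$-closed and the sequence has length $<\lambda$, and set $h_\beta=\bigcup_{\gamma<\beta}h_\gamma$. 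Arranging the construction to respect the $\mathbb{H}_{s,t}$-filtration, the stages of continuity form a club $C\subseteq\lambda$, so by Lemma \ref{Estationary} there is $\delta\in C\cap E$. Then $p_\delta$ forces $\dot h\restriction \mathbb{G}_{t,\delta}=h_\delta\in V[\mathbf{G}_*]$, a partial splitting of $\mathbf{x}_{s,t,\delta}$ with range in $\mathbb{H}_{s,t,\delta}$. As $\cf(\delta)=\aleph_0$ and $\mathbb{G}_{t,\delta+1}$ is obtained from $\mathbb{G}_{t,\delta}$ by adjoining the countably many generators produced by $\bbP_*$ at $\delta$ (the elements $z_n$ of Claim \ref{splitcontra}), I extend $p_\delta$ by $\omega$ further decisions — possible by $\aleph_1$-closure — to a condition deciding $\dot h\restriction \mathbb{G}_{t,\delta+1}=h_{\delta+1}\in V[\mathbf{G}_*]$. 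Because ``being a splitting'' is absolute, $h_{\delta+1}$ is, in $V[\mathbf{G}_*]$, a splitting of $\mathbf{x}_{s,t,\delta+1}$ extending $h_\delta$.

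It then remains to contradict the local non-splitting engineered at $\delta$ during the construction of $\mathbf{G}_*$. The sequence $\mathbf{x}_{s,t,\delta+1}$ arises from $\mathbf{x}_{s,t,\delta}$ by adjoining $z_n$ subject to the $\mathbb{Z}$-adic relations $n!z_{n-1}=z_n+x^1_{\gamma_n}+x^2_{\gamma_n}$ (with $\langle\gamma_n\colon n<\omega\rangle$ cofinal in $\delta$), exactly as in Claim \ref{splitcontra}. Feeding $h_{\delta+1}$ into the divisibility computation of that claim — with $h_*$ replaced by $h_\delta$ — forces $x^2_{\gamma_0}=0$, which is impossible; this is the contradiction that finishes the proof.

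The step I expect to be the genuine obstacle is the last one: Claim \ref{splitcontra} was set up to defeat the \emph{specific} partial splitting anticipated when level $\delta+1$ was adjoined to the $\bbP_*$-condition, whereas the $h_\delta$ above is produced by the iteration $\bbP$ and is, a priori, unrelated; moreover each finite-level sequence $\mathbf{x}_{s,t,\delta+1}$ is itself split (the quotient $\mathbb{G}_{t,\delta+1}$ is free), so the obstruction cannot be intrinsic to level $\delta+1$ alone. To bridge this I would carry out the reflection inside an elementary submodel $M\prec(\cH(\chi),\in,\lhd)$ of size $\lambda$ with $\delta=M\cap\lambda\in E$ and $\{\dot h,\bbP,\mathbf{x}_{s,t}\}\subseteq M$, so that $h_\delta$ is the restriction to level $\delta$ of a genuine splitting and hence is \emph{coherent} with the splittings of the free lower levels $\mathbf{x}_{s,t,\gamma}$ for $\gamma<\delta$, $\gamma\notin E$. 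The content to be verified is that every such coherent $V[\mathbf{G}_*]$-splitting of $\mathbf{x}_{s,t,\delta}$ lies in the family the construction at $\delta$ annihilates; equivalently, that the obstruction of Claim \ref{splitcontra} is insensitive to the choice of $h_*$ beyond this coherence, which is exactly what the $\mathbb{Z}$-adic relation is designed to guarantee.
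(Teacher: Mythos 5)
Your route is genuinely different from the paper's, and the difference is exactly where the gap lies. The paper does \emph{not} argue over $V[\mathbf G_*]$: it works in $V$ with the two-step iteration $\bbP_*\ast\bbP_{\lambda^+}$, notes that conditions $(p,\langle q_\alpha:\alpha<\lambda^+\rangle)$ with all coordinates in $V$ are dense, extends such a condition to one deciding $\dot{h}\restriction\mathbb G^q_{s,\alpha_q}=h_*$ with $\alpha_q\in E_q$, and then makes the decisive move: it extends the \emph{first} coordinate $q$ to $r$ via Claim \ref{splitcontra}, so that the level-$(\alpha_q+1)$ part of $\mathbf x_{s,t}$ is built \emph{after} $h_*$ has been decided, tailored to kill precisely the extensions of $h_*$. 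That reaction is available only while the $\bbP_*$-generic is still undetermined. In your argument $\mathbf G_*$ is fixed first, so the level-$(\delta+1)$ structure at your reflection point $\delta\in C\cap E$ is already frozen, and nothing in Definition \ref{pstarforcing} or in the genericity of $\mathbf G_*$ makes it obstruct \emph{your} $h_\delta$: the density arguments that shaped $\mathbf G_*$ defeat only partial splittings decided by $\bbP_*$-conditions against $\bbP_*$-names in $V$, whereas $h_\delta$ comes from a $\bbP$-name that did not exist when $\mathbf G_*$ was chosen. Your own caveat is the correct diagnosis, but the proposed bridge does not close it: the obstruction of Claim \ref{splitcontra} is \emph{not} insensitive to $h_*$. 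The claim normalizes a basis so that $h_*(x^2_\alpha)=x^2_\alpha$ and defeats extensions of that particular map; since the level-$(\delta+1)$ sequence consists of free groups, it certainly splits, and a level-$\delta$ partial splitting of the form $h_*+f\circ w$, where $w\colon\mathbb G_{t,\delta}\to\mathbb G_{s}$ is a homomorphism that does not itself extend to level $\delta+1$, can perfectly well extend to a splitting of the level-$(\delta+1)$ sequence even though $h_*$ does not. Coherence with the lower (free, hence split) levels --- which is all your elementary submodel supplies --- does not confine $h_\delta$ to the defeated family.

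There is a second, structural problem in the first half of your argument. Your chain $\langle p_\beta:\beta<\lambda\rangle$ needs upper bounds at every limit $<\lambda$, i.e., genuine $\lambda$-closure of $\bbP$. But $\bbP$ cannot be $\lambda$-closed in that naive sense: a $\lambda$-closed forcing adds no splitting of any exact sequence of ground-model groups with quotient of size $\lambda$ (enumerate the quotient, decide the values along a chain, and take the union of the decisions in the ground model), yet by Lemmas \ref{qwforcingproerties} and \ref{whensRt} the iteration must add exactly such splittings for the pairs in $R$. Hence the completeness in Lemma \ref{s12} is the weaker strategic notion of \cite{Sh:587}, and the limits at which upper bounds can fail are precisely those where the coordinatewise domains of your conditions climb into $E$ --- the very points your argument wants to exploit. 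Indeed, if your chain could be continued through all $\beta<\lambda$, then $\bigcup_{\beta<\lambda} h_\beta$ would already be a splitting of $\mathbf x_{s,t}$ lying in $V[\mathbf G_*]$, contradicting Lemma \ref{nosplitifincompatible} outright, with no need for $E$, $\delta$, or Claim \ref{splitcontra}; the fact that the paper instead runs the entire argument over $V$ signals that this shortcut is not available. To repair your proof you would either have to establish the relevant preservation property of $\bbP$ over $V[\mathbf G_*]$ (which is essentially the lemma itself), or do what the paper does: keep the $\bbP_*$-coordinate alive so that Claim \ref{splitcontra} can respond to the decided partial splitting.
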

\begin{proof}
Since the argument  is similar to the proof of Lemma \ref{nosplitifincompatible}, we just present the sketch of proof.
Here, we work in $V$.
Let us first
combine Lemma  \ref{pstarpreservescardinals} along with Lemma
\ref{s12}, and
 note that the set of conditions of the form $$(p, \langle q_\alpha: \alpha < \lambda^+             \rangle) \in \bbP_* \ast \bbP_{\lambda^+}$$ such that $p, q_\alpha \in V$, is dense in $\bbP_* \ast \bbP_{\lambda^+}$.
Suppose towards contradiction that the exact sequence
$\mathbf x_{s,t}$ splits. This gives us $$(p, \langle p_\alpha: \alpha < \lambda^+             \rangle) \in \bbP_* \ast \bbP_{\lambda^+}$$ and $\name{h}$ satisfying:
\begin{center}
	$(p, \langle p_\alpha: \alpha < \lambda^+             \rangle) \Vdash$``$\name{h}: \name{\mathbb{G}}_s \to \name{\mathbb{H}}_{s,t}$ is such that $\name h \circ \name{g}_{s,t}=\id_{\name{\mathbb{G}}_s}$''.
\end{center}
As before, we can find an extension $$(p, \langle p_\alpha: \alpha < \lambda^+             \rangle) \leq (q, \langle q_\alpha: \alpha < \lambda^+             \rangle)$$ equipped with the following two properties:
\begin{enumerate}
	\item $\alpha_{q}\in E_{q} \cap S^\lambda_{\aleph_0},$
	\item $(q, \langle q_\alpha: \alpha < \lambda^+             \rangle)$ decides $\name{h} \restriction \mathbb G^{q}_{s, \alpha_{q}},$ say
	\[
	(q, \langle q_\alpha: \alpha < \lambda^+             \rangle) \Vdash \text{``} \name{h} \restriction \mathbb G^{q}_{s, \alpha_{q}} = h_* \text{''}.
	\]
\end{enumerate}
In view of Claim \ref{splitcontra}  we can find an extension  $r$   of $q$ such that
$(r, \langle q_\alpha: \alpha < \lambda^+             \rangle)$ extends $(q, \langle q_\alpha: \alpha < \lambda^+             \rangle)$
and also, it forces $$``\emph{there is no splitting map for }\name{g}_{s,t} \emph{ extending }h_*.''$$ We get a contradiction and the
lemma follows.
\end{proof}
This completes the proof of Theorem \ref{arbitrarygraphZFC}.

\end{document}